\renewcommand\a{\alpha}
\renewcommand\b{\beta}
\newcommand\g{\gamma}
\renewcommand\d{\delta}
\newcommand\la{\lambda}
\newcommand\z{\zeta}
\newcommand\e{\eta}
\renewcommand\th{\theta}
\newcommand\io{\iota}
\newcommand\s{\sigma}
\newcommand\x{\chi}
\newcommand\f{\phi}
\newcommand\vf{\varphi}
\newcommand\p{\psi}
\renewcommand\t{\tau}
\renewcommand\r{\rho}
\newcommand\w{\omega}
\newcommand\vS{\varSigma}
\newcommand\vD{\varDelta}
\newcommand\vT{\varTheta}
\newcommand\vG{\varGamma}
\newcommand\ve{\varepsilon}
\newcommand\Fq{{\mathbf F}_q}
\newcommand\Ql{\bar{\mathbf Q}_l}
\newcommand\BP{\mathbf P}
\newcommand\BZ{\mathbf Z}
\newcommand\BH{\mathbf H} 
\newcommand\BG{\mathbf G}
\newcommand\Bm{\mathbf m}
\newcommand\Bk{\mathbf k}
\newcommand\Bla{\boldsymbol\lambda}
\newcommand\Bmu{\boldsymbol\mu}
\newcommand\CA{\mathcal{A}}
\newcommand\CB{\mathcal{B}}
\newcommand\ZC{\mathcal{C}}
\newcommand\CH{\mathcal{H}}
\newcommand\CE{\mathcal{E}}
\newcommand\CL{\mathcal{L}}
\newcommand\DD{\mathcal{D}}
\newcommand\CS{\mathcal{S}}
\newcommand\CM{\mathcal{M}}
\newcommand\CN{\mathcal{N}}
\newcommand\CO{\mathcal{O}}
\newcommand\CP{\mathcal{P}}
\newcommand\CU{\mathcal{U}}
\newcommand\CF{\mathcal{F}}
\newcommand\CG{\mathcal{G}}
\newcommand\CZ{ \mathcal{Z}}
\newcommand\CX{ \mathcal{X}}
\newcommand\CY{ \mathcal{Y}}
\newcommand\CW{ \mathcal{W}}
\newcommand\Fg{\mathfrak g}
\newcommand\iv{^{-1}}
\newcommand\wh{\widehat}
\newcommand\wt{\widetilde}
\newcommand\wg{^{\wedge}}
\newcommand\we{\wedge}
\newcommand\ol{\overline}
\newcommand\spade{\spadesuit}
\newcommand\hra{\hookrightarrow}
\newcommand\lra{\leftrightarrow}
\newcommand\IC{\operatorname{IC}}
\newcommand\Ch{\operatorname{Ch}}
\newcommand\Hom{\operatorname{Hom}}
\newcommand\End{\operatorname{End}}
\newcommand\Ind{\operatorname{Ind}}
\newcommand\Res{\operatorname{Res}}
\newcommand\codim{\operatorname{codim}\,}
\newcommand\supp{\operatorname{supp}\,}
\newcommand\rk{\operatorname{rank}\,}
\newcommand\ps{\operatorname{ps}}
\newcommand\reg{_{\operatorname{reg}}}
\newcommand\unip{\operatorname{uni}}
\newcommand\uni{_{\operatorname{uni}}}
\newcommand\nil{_{\operatorname{nil}}}
\newcommand\id{\operatorname{id}}
\newcommand\lp{\operatorname{\!\langle\!}}
\newcommand\rp{\operatorname{\!\rangle}}
\renewcommand\Im{\operatorname{Im}}
\newcommand\dw{\dot w}
\newcommand{\isom}{\,\raise2pt\hbox{$\underrightarrow{\sim}$}\,}
\numberwithin{equation}{section}
\newtheorem{thm}{Theorem}[section]
\newtheorem{lem}[thm]{Lemma}
\newtheorem{cor}[thm]{Corollary}
\newtheorem{prop}[thm]{Proposition}
\def \para#1{\par\medskip\textbf{#1}
              \addtocounter{thm}{1}}
\def \remark#1{\par\medskip\noindent
                \textbf{Remark #1}
                \addtocounter{thm}{1}}
\begin{document}
\setlength{\baselineskip}{4.9mm}
\setlength{\abovedisplayskip}{4.5mm}
\setlength{\belowdisplayskip}{4.5mm}
\renewcommand{\theenumi}{\roman{enumi}}
\renewcommand{\labelenumi}{(\theenumi)}
\renewcommand{\thefootnote}{\fnsymbol{footnote}}
\renewcommand{\thefootnote}{\fnsymbol{footnote}}
\allowdisplaybreaks[2]
\parindent=20pt
\medskip
\begin{center}
 {\bf Exotic symmetric space over a finite field, III} 
\\
\vspace{1cm}
Toshiaki Shoji
 and Karine Sorlin\footnote{ supported by 
ANR JCJC REPRED, ANR-09-JCJC-0102-01.}
\\
\vspace{0.3cm}
\end{center}
\title{}
\begin{abstract}
Let $\CX = G/H \times V$, where $V$ is a symplectic space 
such that $G = GL(V)$ and $H  = Sp(V)$.  
In previous papers, the authors constructed 
character sheaves on $\CX$, based on the explicit data.
On the other hand, there exists a conceptual definition of  
character sheaves on $\CX$ based on the idea of Ginzburg in the
case of symmetric spaces. Our character sheaves form a subset of 
Ginzburg type character sheaves.  In this paper we show that 
these two definitions actually coincide, which implies 
a classification of Ginzburg type character sheaves on $\CX$. 
 
\end{abstract}
\maketitle
\markboth{SHOJI AND SORLIN}{ EXOTIC SYMMETRIC SPACE, III}
\pagestyle{myheadings}

\par\bigskip

\begin{center}
{\sc Introduction}
\end{center}
\par\medskip
\addtocounter{section}{-1}

Let $\Bk$ be an algebraic closure of a finite field 
$\Fq$ with odd characteristic.  Let  $V$ be a $2n$-dimensional
vector space over $\Bk$ and let $G = GL(V)$.  
There exists an involutive automorphism $\th$ on $G$ such that
$H = G^{\th} \simeq Sp_{2n}$. Put
$G^{\io\th} = \{ g \in G \mid \th(g) = g\iv \}$, which is isomorphic 
to the symmetric space $G/H$.  
 We consider the variety 
$\CX = G^{\io\th} \times V$ on which $H$ acts diagonally.  
In [SS], [SS2], the intersection cohomology complexes associated to 
$H$-orbits on $\CX$ were studied.  In particular, the set of 
character sheaves $\wh\CX$ on $\CX$ was defined in [SS] as a certain set of
$H$-equivariant simple perverse sheaves on $\CX$.   
We consider the $\Fq$-structure on $\CX$ with Frobenius map 
$F: \CX \to \CX$, and denote by $\wh\CX^F$ the set of $F$-stable character 
sheaves $A \in \wh \CX$ (i.e.  such that $F^*A \simeq A$).   
It was shown in [SS2] that the set of characteristic functions 
of character sheaves in $\wh\CX^F$ forms a basis of the space 
$\ZC_q(\CX)$ of $H^F$-invariant functions on $\CX^F$, as far as $q$ is large enough.  
\par
Our definition of character sheaves on $\CX$ was 
based on the explicit data of simple perverse sheaves on $\CX$.
However, there exists a more conceptual approach for defining 
character sheaves.  In fact, Ginzburg [Gi] defined character 
sheaves on symmetric spaces in a way 
independent of the classification, which is a generalization of 
Lusztig's definition of character sheaves, and Grojnowski [Gr]
classified thus defined character sheaves in the case of $G/H$.
As pointed out by Henderson and Trapa [HT], it is possible to
extend Ginzburg type definition of character sheaves 
on the symmetric space $G/H$ to our variety $\CX \simeq G/H \times V$.      
We denote the set of thus defined character sheaves by $\wh\CX'$, 
tentatively.  Then we have $\wh\CX \subset \wh\CX'$, and in fact,
$\wh\CX$ corresponds to the so-called principal series part 
of $\wh\CX'$.  
As an analogue of the theory of character sheaves on reductive groups, 
it is natural to expect that the set of $F$-stable character sheaves
$\wh\CX'^F$ will produce a basis of $\ZC_q(\CX)$.  
So we conjectured in [SS2] that $\wh \CX = \wh\CX'$.
The aim of this paper is to show that the equality holds true. 
\par
On the other hand, for any finite dimensional vector space $V$ over 
$\Bk$, we consider the variety $\CX_1 =  G \times V$ with $G = GL(V)$, 
on which $G$ acts diagonally.  The set of 
 character sheaves on 
$\CX_1$, which is a certain set of $G$-equivariant 
simple perverse sheaves on $\CX_1$, was introduced 
by Finkelberg, Ginzburg  and Travkin [FGT]. 
We denote it as $\wh\CX_1'$. 
They call these character sheaves as mirabolic character sheaves.  
Also a certain explicitly classified subset $\wh\CX_1$ of  
$\wh\CX_1'$ was introduced in [FGT], and studied extensively by them  in connection with 
affine Grassmannian and Hall algebras, and by Achar and Henderson [AH] 
in connection with the geometry of the enhanced nilpotent cone.  
They conjecture that $\wh\CX_1' = \wh\CX_1$ in [FGT, Conjecture 1]. 
In this paper, we prove their conjecture, and give a positive answer to Lusztig's 
question whether $\wh\CX'^F_1$ will produce a basis of $\ZC_q(\CX_1)$. 
\par
Our strategy is quite similar to the case of character sheaves 
developed by Lusztig [L3, I].  We consider a variety  
$\CX$ which is a direct product of various $\CX, \CX_1$ as above
and of various $GL_{2m}/Sp_{2m}$, $GL_m$. 
We construct induction functors 
and restriction functors on a certain category related to $\CX$, 
and define  a cuspidal character sheaf as in the case of 
original character sheaves, by making use of restriction functors.  
We show that in our case cuspidal 
character sheaves only occur in the case 
where $G$ is a torus, and that any character sheaf in $\wh\CX'$ 
is obtained as a simple direct summand of the induction from 
the Borel subgroup.  This gives the required identity 
$\wh\CX = \wh\CX'$.
\par
The authors would like to thank the referees for valuable suggestions, especially 
for informing them  about Braden's theorem, which contributed to improve the content of the
paper.

\par\bigskip
\begin{center}
\sc{ Table of Contents}
\end{center}

\par\medskip

0. Preliminaries on perverse sheaves
\par
1. Intersection cohomology on $G^{\io\th} \times V$-revisited
\par
2. Intersection cohomology on $GL(V) \times V$
\par
3. Definition of character sheaves
\par
4. Induction
\par
5. Restriction
\par
6. Classification of character sheaves

\par\bigskip
\section{Preliminaries on perverse sheaves}

\para{0.1.}
In this section, we list up some known properties of 
perverse sheaves which will be used later (mainly after Section 3).  
The basic references for them are [BBD] and [BL]. 
Let $\Bk$ be an algebraic closure of a finite field $\Fq$.
For an algebraic variety $X$ over $\Bk$, let  
$\DD X = \DD_c^b(X, \Ql)$ be the bounded 
derived category of constructible $\Ql$-sheaves.
Let $\DD X^{\le 0}$ be the full subcategory of $\DD X$ whose objects 
are those $K$ in $\DD X$ such that, for any integer $i$, 
$\dim\supp \CH^iK \le -i$. 
Let $\DD X^{\ge 0}$ be the full subcategory of $\DD X$ whose 
objects are those $K$ such that $DK \in \DD X^{\le 0}$, where 
$DK$ is the Verdier dual of $K$.  The category of perverse 
sheaves $\CM X$ is defined as the full subcategory
of $\DD X$ whose objects are those 
$K \in \DD X^{\le 0} \cap \DD X^{\ge 0}$.
Hence $\CM X$ is the heart of the $t$-structure of $\DD X$ 
defined by $(\DD X^{\le 0}, \DD X^{\ge 0})$, and is an abelian 
category in which all objects have finite length.
\par
We put $\DD X^{\le n} = \DD X^{\le 0}[-n]$ and 
$\DD X^{\ge n} = \DD X^{\ge 0}[-n]$ for an integer $n$.
The inclusion $\DD X^{\le n} \to \DD X$ has a right adjoint
${}^p\t_{\le n} : \DD X \to \DD X^{\le n}$, and the inclusion
$\DD X \to \DD X^{\ge n}$ has a left adjoint 
${}^p\t_{\ge n}: \DD X^{\ge n} \to \DD X$. 
We have a cohomological functor 
${}^pH^0 := {}^p\t_{\ge 0}{}^p\t_{\le 0} \simeq 
   {}^p\t_{\le 0}{}^p\t_{\ge 0} :  \DD X \to \CM X$.
The perverse cohomology functor ${}^pH^i: \DD X \to \CM X$ is 
defined by ${}^pH^iK = {}^pH^0(K[i])$  for $i \in \BZ$.
For $K \in \DD X$, a simple perverse sheaf 
$A \in \CM X$ is said to be a perverse constituent of $K$ if 
$A$ is isomorphic to a simple constituent of some ${}^pH^i K$. 
\par\medskip\noindent
(0.1.1)
Let $K \in \DD X$.  Then $K$ belongs to $\DD X^{\le n}$ if and only
if ${}^pH^iK = 0$ for $i > n$.  A similar results holds also for $\DD X^{\ge n}$.   
\par
\medskip\noindent
(0.1.2) \ Let $F$ be a functor from $\DD X$ to $\DD Y$ between 
triangulated categories. 
Let $K \in \CM X$, and assume that $F(A) \in \DD Y^{\le n}$ for 
any simple constituent $A$ of $K$.  Then we have $F(K) \in \DD Y^{\le n}$.
Similar results hold also for $\DD Y^{\ge n}$ and for $\CM Y$. 

\par\medskip\noindent
(0.1.3) \  Assume that $A \in \DD X^{\le 0}$ and $B \in \DD X^{\ge 1}$.
Then 
\begin{equation*}
\Hom (A, B) = 0.
\end{equation*} 
In fact this follows from the adjointness property of ${}^p\t_{\le 0}$.

\para{0.2.}
Let $f: X \to Y$ be a morphism between algebraic varieties 
$X$ and $Y$.  We have functors $f_*, f_!: \DD X \to \DD Y$ and 
$f^*, f^! : \DD Y \to \DD X$ which satisfy the following adjointness
properties; for any $A \in \DD X, B \in \DD Y$, 
\begin{align*}
\tag{0.2.1}
\Hom (f^*B, A) &= \Hom (B, f_*A), \\
\tag{0.2.2}
\Hom (f_!A, B) &= \Hom (A, f^!B). 
\end{align*} 
If $f$ is proper, $f_* = f_!$.  If $f$ is   
smooth with connected fibres
of dimension $d$, $f^! = f^*[2d]$. 
In this case, we set $\wt f = f^*[d]$. 
\par
Assume that $f$ is smooth with connected fibres of dimension $d$.
The following properties are well-known (see [BBD, 4.2.5]);
let $K \in \DD Y$. Then $K \in \DD Y^{\le 0}$ 
if and only if $\wt fK \in \DD X^{\le 0}$, and 
$K \in \DD Y^{\ge 0}$ if and only if $\wt fK \in \DD X^{\ge 0}$.
Hence $K \in \CM Y$ if and only if
$\wt fK \in \CM X$.  Moreover, the perverse cohomology functor 
${}^pH^i$ commutes with $\wt f$. We have 
\par\medskip\noindent
(0.2.3) \ If $K \in \DD Y^{\le 0}$, $K' \in \DD Y^{\ge 0}$, then
\begin{equation*}
\Hom_{\DD Y}(K,K') = \Hom_{\DD X}(\wt fK, \wt fK').
\end{equation*} 
In particular, $\wt f: \CM Y \to \CM X$ is fully faithful. 

\para{0.3.}
If $G$ is an algebraic group acting on $X$, 
we denote by $\DD^G(X)$ the $G$-equivariant derived category on $X$ 
in the sense of Bernstein and Lunts [BL], which is a triangulated 
category with $t$-structure.  As its heart, we have 
a category $\CM^G(X)$ of $G$-equivariant perverse sheaves.
We have forgetful functors $\DD^G(X) \to \DD X$, 
$\CM^G(X) \to \CM X$.
If $G$ is connected, $\CM^G(X) \to \CM X$
is fully faithful. 

\par
If $G$ acts on $X, Y$, and $f : X \to Y$ is a $G$-equivariant map, 
the functors $f_!: \DD X \to \DD Y$, $f^* : \DD Y \to \DD X$ can be 
lifted to the functors between $\DD^G(X)$ and $\DD^G(Y)$, which 
we denote by the same symbols $f_!, f^*$. 
If $f$ is a principal $G$-bundle, then $f^*: \DD^G(Y) \to \DD^G(X)$
factors through an equivalence $\DD Y \isom \DD^G(X)$ by
[BL, 2.2.5].  We denote by $f_{\flat}: \DD^G(X) \to \DD Y$
the inverse of this equivalence.  

\para{0.4.}
Let $T$ be a torus acting freely on $X$, and $\CL$ be a local system 
of $T$ of rank one. We denote by $\DD_{\CL}(X)$ the full subcategory
of $\DD X$ whose objects are complexes $K$ such that 
$a^*K \simeq \CL\boxtimes K \in \DD(T\times X)$, where 
$a: T \times X \to X$ is the action of $T$.
$\DD_{\CL}(X)$ inherits the $t$-structure of $\DD X$, and we denote
its heart by $\CM_{\CL}(X)$.   We have forgetful functors 
$\DD_{\CL}(X) \to \DD X$, $\CM_{\CL}(X) \to \CM X$, and the latter
is fully faithful.  These properties come from  [BL]. 
If an algebraic group $G$ acts on $X$, commuting with  the action of $T$, 
then one can define $\DD_{\CL}^G(X)$ and $\CM_{\CL}^G(X)$ in an 
obvious way. 

\par
The following remark ([Gr, 0.6]) is basic.
\par\medskip
(0.4.1) \ Let $T$ be a functor $\DD X \to \DD Y$ 
which sends distinguished triangles in $\DD X$ to those in 
$\DD Y$, and commutes with degree shift (such as the composite 
of $f_!, f^*, f_{\flat}$ and degree shift).  Take $K \in \DD X$.
If $A$ is a perverse constituent of $T(K)$, then  
$A$ is a perverse constituent of $T(A')$ for some perverse 
constituent $A'$ of $K$.  
\par\medskip
In fact, this follows easily from the perverse cohomology 
long exact sequence associated to the distinguished triangle
$(T(K_1), T(K_2), T(K_3))$, where 
\begin{equation*}
(K_1, K_2, K_3) = ({}^pH^iK[-i], {}^p\t_{ \ge i}K, {}^p\t_{\ge  i+1}K) 
\end{equation*} 
is the distinguished triangle obtained by applying the functor
${}^p\t_{\ge 0}$ to the distinguished triangle 
$({}^p\t_{\le 0}(K[i]), K[i], {}^p\t_{\ge 1}(K[i]))$.  

\para{0.5.}
Assume that $\BG_m$ acts on a variety $X$.  An object $K \in \DD(X)$ 
is called weakly equivariant if $K \in \DD_{\CL}(X)$ for some 
local system $\CL$ on $\BG_m$, where $\DD_{\CL}(X)$ is defined as in 0.4, 
without assuming the freeness of the action. 
Let $X^0$ be the closed 
subvariety of $X$ consisting of fixed points by $\BG_m$.  We define 
varieties
\begin{align*}
X^+ &= \{ x \in X \mid \lim_{\substack{t \to 0}}t\cdot x \in X^0 \} \\
X^- &= \{ x \in X \mid \lim_{\substack{t \to \infty}}t\cdot x \in X^0\}.  
\end{align*}
Let $i^{\pm}: X^0 \hra X^{\pm}$ and $p^{\pm} : X^{\pm} \hra X$ be the 
inclusion maps. 
The following result of Braden is crucial for later discussions. 
\begin{thm}[{Braden [B, Theorem 1, Theorem 2]}]
Assume that $X$ is a normal variety with $\BG_m$-action. For 
a weakly equivariant object $K$ in $\DD(X)$, there exists a natural isomorphism 
\begin{equation*}
(i^+)^!(p^+)^*K \simeq (i^-)^*(p^-)^!K.
\end{equation*}
Moreover, if $K$ is a weakly equvariant simple perverse sheaf on $X$, 
then $(i^+)^!(p^+)^*K$ is a semisimmple complex on $X^0$.
\end{thm}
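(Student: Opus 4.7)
The plan is to proceed in two stages. First, build a canonical natural transformation $\eta_K : (i^+)^!(p^+)^*K \to (i^-)^*(p^-)^!K$ using the identifications $p^+ \circ i^+ = p^- \circ i^-$ (both equal the inclusion $j : X^0 \hra X$), together with the units and counits of the adjunctions $(p^{\pm})^* \dashv (p^{\pm})_*$ and $(i^{\pm})_! \dashv (i^{\pm})^!$. Second, show that $\eta_K$ is an isomorphism for every weakly equivariant $K$; this is the content of the first assertion. Weak equivariance must intervene essentially, since for an arbitrary object $K$ the two complexes genuinely differ.

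For the isomorphism step I would invoke Sumihiro's theorem to cover $X$ by $\BG_m$-stable affine opens---this is where normality of $X$ enters---and reduce to the case where $X$ is affine with $X^0$ connected. In that situation $X^+$ is an affine $\BG_m$-bundle over $X^0$ by the Bialynicki--Birula decomposition, and the $\BG_m$-flow provides an explicit deformation retract of $X^+$ onto $X^0$. Using weak equivariance, the cohomology along each non-fixed $\BG_m$-orbit decomposes according to the local system $\CL$ on $\BG_m$; the key observation is that only the weight-zero component survives passage to the fixed-point limit, while the other components contribute cohomology of $\BG_m$ with coefficients in a non-trivial local system, which vanishes. The analogous computation on $X^-$ with the reversed flow produces the same weight-zero contribution, so both sides of $\eta_K$ agree.

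For the semisimplicity statement, I would exploit the canonical isomorphism just proved to show that the hyperbolic restriction functor preserves purity in the sense of [BBD]. The $(+)$-description $(i^+)^!(p^+)^*K$ gives a lower weight bound, while the $(-)$-description $(i^-)^*(p^-)^!K$ gives an upper weight bound; combining these forces purity whenever $K$ is pure. A simple weakly equivariant perverse sheaf on $X$ is the intersection cohomology complex of an irreducible local system on a locally closed subvariety, hence pure after choosing an $\Fq$-structure. The Decomposition Theorem of [BBD] then forces $(i^+)^!(p^+)^*K$ to be a semisimple complex on $X^0$.

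The main obstacle I expect is proving that $\eta_K$ is an isomorphism. Constructing the transformation is essentially formal, but establishing invertibility demands a careful exchange of $!$'s and $*$'s, which is only possible after combining the geometry of $\BG_m$-actions on normal varieties (Sumihiro and Bialynicki--Birula) with the vanishing of cohomology of $\BG_m$ in non-trivial local systems---that is, with the weak equivariance hypothesis on $K$. Any of these ingredients alone is insufficient; their coordinated use is the genuine content of the theorem.
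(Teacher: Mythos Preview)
The paper does not prove this statement; it is quoted as a known result due to Braden (reference [B, Theorem~1, Theorem~2]) and is used as a black box in later sections (notably in Proposition~5.9 and Proposition~6.11). There is therefore no ``paper's own proof'' to compare against.

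That said, your outline is a reasonable sketch of how Braden's original argument actually proceeds: the natural transformation is built formally from adjunctions, normality is used via Sumihiro's theorem to reduce to the affine case, the Bialynicki--Birula decomposition controls the local geometry, and the semisimplicity statement is deduced from purity and the Decomposition Theorem. If you were asked to supply a proof of the cited result, this would be the correct skeleton, though of course the details (especially the invertibility of $\eta_K$ in the affine case) require real work that you have only gestured at.
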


The following lemma is also useful to apply Braden's theorem to our setting
in later discussions (see also Braden [B, Lemma 6]).

\begin{lem}[{Springer [Sp, Proposition 1]}]
Let $q : Y \to Z$ be a vector bundle with $\BG_m$-action, where 
$\BG_m$ acts trivially on $Z$. Assume that $\BG_m$ acts linearly on 
each fibre of $q$ with strictly positive weights. Let $i: Z \hra Y$ be the 
inclusion of the zero section.  Then for a weakly equivariant object 
$K \in \DD(Y)$, there exists natural isomorphisms $i^!K \isom q_!K$, 
$q_*K \isom i^*K$.  
\end{lem}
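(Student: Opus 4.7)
The plan is as follows. By Verdier duality (using $D\circ i^! = i^*\circ D$ and $D\circ q_! = q_*\circ D$), the two isomorphisms $i^!K \isom q_!K$ and $q_*K \isom i^*K$ are dual to each other, so it suffices to construct and verify the second. There is a canonical natural transformation $q_*K \to i^*K$, obtained as the composition $q_*K \xrightarrow{q_*\eta_K} q_*i_*i^*K = (q\circ i)_* i^*K = i^*K$, using the adjunction unit $\eta : \id \to i_*i^*$ and the identity $q\circ i = \id_Z$. The task is to show this map is an isomorphism.

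To verify this, I would work Zariski-locally on $Z$ so as to trivialise the bundle as $q\iv(U) \cong U\times V$ over an open $U\subset Z$; by a K\"unneth-type argument (or, equivalently, by checking the assertion stalk-by-stalk at $z\in Z$, noting that the restriction $K|_{V_z}$ inherits weak equivariance with the same $\CL$), the statement reduces to the case $Z = \{\text{pt}\}$, in which $Y = V$ is a vector space with a linear $\BG_m$-action of strictly positive weights and the assertion becomes $R\Gamma(V,K) \isom K_0$. The strict positivity of the weights lets the action $a\colon \BG_m \times V \to V$ extend to a morphism $\bar a\colon \AA^1 \times V \to V$ satisfying $\bar a(0,\cdot) \equiv 0$ and $\bar a(1,\cdot) = \id_V$. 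Forming $M := (p_1)_* \bar a^*K$ on $\AA^1$, base change together with $H^i(V,\Ql) = 0$ for $i>0$ (since $V\cong\AA^d$) yields $M_1 \simeq R\Gamma(V,K)$ and $M_0 \simeq K_0$, so it remains to show $M$ is locally constant on $\AA^1$.

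For the constancy of $M$, I would introduce the auxiliary $\BG_m$-action on $\AA^1 \times V$ defined by $s\cdot(t,v) = (s\iv t,\, sv)$, under which $\bar a$ is invariant (since $(s\iv t)(sv) = tv$). Combined with the weak equivariance of $K$, this equips $\bar a^*K$ with a weakly equivariant structure whose $p_1$-pushforward is a weakly equivariant complex on $\AA^1$ for the scaling $\BG_m$-action, forcing $M$ to be locally constant on $\BG_m\subset\AA^1$; a separate contraction argument at the origin then extends local constancy across $0$. The main obstacle I anticipate is handling the possibly nontrivial local system $\CL$ on $\BG_m$, since $\CL$ need not extend across $0\in\AA^1$. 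This is resolved by restricting the equivariance isomorphism $a^*K \simeq \CL\boxtimes K$ to $\BG_m\times\{0\} \subset \BG_m\times V$, which forces $\CL$ to be trivial whenever $K_0\ne 0$; the residual case $K_0 = 0$ is degenerate and treated separately by showing that both $i^*K$ and $q_*K$ vanish via a direct argument on $Y\setminus Z$.
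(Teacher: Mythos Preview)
The paper does not supply its own proof of this lemma; it is quoted from Springer [Sp]. Your contraction strategy via the extended action $\bar a:\AA^1\times V\to V$ is indeed Springer's approach, so the outline is on the right track.

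There is, however, a real gap at the crucial step. Having argued that $M=(p_1)_*\bar a^*K$ is locally constant on $\BG_m\subset\AA^1$, you defer the extension of local constancy across $0$ to ``a separate contraction argument at the origin.'' But the only contraction in sight is the scaling $\BG_m$-action on $\AA^1$ itself, and invoking the lemma for the bundle $\AA^1\to\{0\}$ is circular; mere $\BG_m$-equivariance of $M$ cannot force local constancy at the fixed point (a skyscraper at $0$ is equivariant). This passage across $t=0$ is precisely where the content lies and where strict positivity of the weights must actually be used. The same circularity recurs in your residual case $K_0=0$: the claimed vanishing $q_*K=0$ is again the lemma applied to a $K$ with $i^*K=0$, not a degenerate side case. (Your observation that $\bar a$ is invariant under the auxiliary action, so that $\bar a^*K$ is genuinely equivariant regardless of $\CL$, is correct and makes the $\CL$-dichotomy unnecessary, but does not close the gap.) A secondary issue: both the stalk-by-stalk reduction to $Z=\{\text{pt}\}$ and the identification $M_0\simeq K_0$ tacitly invoke base change for $q_*$ and $(p_1)_*$ along the inclusion of a closed point, which fails in general for non-proper maps; it is safer to prove $i^!K\simeq q_!K$ first, where proper base change for $q_!$ and $(p_1)_!$ is available throughout, and then dualise, rather than the reverse order you chose.
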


\par\bigskip

\section{Intersection cohomology on $G^{\io\th} \times V$ -revisited}
\para{1.1.}
We follow the notation in [SS].  In particular, 
$G = GL(V)$ with $\dim V = 2n$, and $\th : G \to G$ is an 
involutive automorphism on $G$ such that $H = G^{\th} \simeq Sp_{2n}$.
Let $B$ be a $\th$-stable Borel subgroup of $G$ containing a 
$\th$-stable maximal torus $T$. As in [SS], put 
$\CX = G^{\io\th} \times V$ on which $H$ acts diagonally, where 
$G^{\io\th} = \{ g\in G \mid \th(g) = g\iv\}$, which coincides with 
the set $\{ g\th(g)\iv \mid g \in G\}$. 
Let $M_0 \subset M_1 \subset \cdots \subset M_{2n} = V$ be the total 
flag in $V$ stabilized by $B$, here $M_0 \subset \cdots \subset M_n$ 
is the isotropic flag corresponding to $B^{\th}$, and we have 
$M_{n+i} = M_{n-i}^{\perp}$ for $i = 1, \dots, n$.   
Extending the notations in [SS, 3.1], 
we define, for any integer $0 \le m \le 2n$,  
\begin{align*}
\wt\CX_m &= \{ (x,v,gB^{\th}) \in G^{\io\th} \times V \times H/B^{\th}
                 \mid g\iv xg \in B^{\io\th}, g\iv v \in M_m\}, \\
\CX_m &= \bigcup_{g \in H}g(B^{\io\th}\times M_m), \\
\wt\CY_m &= \{ (x,v, gB^{\th}) \in G^{\io\th}\reg \times V \times H/B^{\th}
                 \mid g\iv xg \in B^{\io\th}\reg, g\iv v \in M_m\}, \\
\CY_m &= \bigcup_{g \in H}g(B^{\io\th}\reg \times M_m), 
\end{align*}
where $G^{\io\th}\reg, B^{\io\th}\reg$ are as in [SS, 1.8].
We consider the diagram 
\begin{equation*}
\tag{1.1.1}
\begin{CD}
T^{\io\th} @<\a^{(m)} <<  \wt\CX_m @>\pi^{(m)} >> \CX_m,
\end{CD}
\end{equation*}
where  $\pi^{(m)}(x,v,gB^{\th}) = (x,v)$ and  
$\a^{(m)}(x,v,gB^{\th}) = p(g\iv xg)$ ($p: B^{\io\th} \to T^{\io\th}$
is the projection).
For a tame local system $\CE$ on $T^{\io\th}$, consider the
complex $K_m = K_{m, T,\CE} = (\pi^{(m)})_*(\a^{(m)})^*\CE[\dim \CX_m]$.
In the case where $m = n$, the structure of $K_n$ was described in 
[SS, Theorem 4.2].  In this section, we shall extend it to the 
case $K_{n'}$ where $n'$ is any integer $0 \le n' \le 2n$. 

\para{1.2.}  
First we consider the case where $n' \le n$.
(What we need in later discussions is the case where $n' \ge n$.
We include the case $n' < n$ just for the reference.)
We fix an integer $n' \le n$, and write 
$\CX_{n'}, \CY_{n'}, \pi^{(n')}$, etc. as $\CX', \CY', \pi'$, etc. 
Let $\p': \wt\CY' \to \CY$ be the restriction of $\pi'$ 
on $\wt\CY'$. 
As in [SS, 3.2], for each subset $I \subset [1,n']$ 
such that $|I| = m$, we 
define a subset $M_I$ of $M_{n'}$,
and a map $\p_I : \wt\CY_I \to \CY_m^0$, where 
$\CY_m^0 = \CY_m\backslash \CY_{m-1}$ and 
\begin{equation*}
\wt\CY_I = H \times^{B^{\th} \cap Z_H(T^{\io\th})}(T^{\io\th}\reg \times M_I).
\end{equation*}
Then $\wt\CY_I$ can be identified with a subset of $\wt\CY'$.
Put $(\wt\CY_m^+)' = (\p')\iv(\CY_m^0)$.  
We define $\CW = N_H(T^{\io\th})/Z_H(T^{\io\th}) \simeq S_n$, and 
let $\CW'$ be a subgroup of $\CW$ corresponding to the subgroup 
$S_{n'}\times S_{n-n'}$ 
of $S_n$.  Then $\CW'$ acts naturally on $(\wt\CY_m^+)'$, and we have 
as in [SS, (3.4.1)]
\begin{equation*}
(\wt\CY_m^+)' = \coprod_{\substack{I \subset [1,n']\\ |I| = m}}
\wt\CY_I = \coprod_{w \in \CW'/\CW'_{\Bm}}w(\wt\CY_{[1,m]}),
\end{equation*}  
where $\CW'_{\Bm}$ is the stabilizer of $[1,m]$ in $\CW'$
(i.e., $\CW'_{\Bm} \simeq S_m \times S_{n'-m} \times S_{n-n'}$).
We denote by $\p'_m: (\wt\CY_m^+)' \to \CY^0_m$ the restriction of 
$\p'$ on $(\wt\CY_m^+)'$.  Then $\p'_m$ is $\CW'$-equivariant
with respect to the trivial action of $\CW'$ on $\CY_m^0$. 
Let $\a_0$ (resp. $\a_I$) be the restriction of $\a : \wt\CX \to T^{\io\th}$
to $\wt\CY$ (resp. to $\wt\CY_I$).  Let $\CE$ be a tame local system 
on $T^{\io\th}$.  Then as in [SS, (3.4.2)], we have
\begin{equation*}
(\p'_m)_*\a_0^*\CE|_{\wt\CY'} \simeq 
        \bigoplus_{\substack{I \subset [1,n'] \\ |I| = m}}
             (\p_I)_*\a_I^*\CE.
\end{equation*}
In the following discussion, we denote the restriction of $\a_0^*\CE$ on 
various subvarieties such as $\wt\CY', (\wt\CY_m^+)'$ etc. simply by $\a_0^*\CE$ 
if there is no fear of confusion.
\para{1.3.}
By [SS, 3.2], the map $\p_I$ is factorized as 
\begin{equation*}
\begin{CD}
\p_I: \wt\CY_I  @>\xi_I>> \wh \CY_I @>\e_I>> \CY_m^0,
\end{CD}
\end{equation*}
and the map $\xi_I$ is a locally trivial fibration with fibre 
isomorphic to $(SL_2/B_2)^{I'} \simeq \BP_1^{I'}$, and the map 
$\e_I$ is a finite Galois covering with Galois group $\CW_I$, where 
$I'$ is the complement of $I$ in $[1,n]$, and $\CW_I$ is the 
stabilizer of $I$ in $\CW$, i.e., $\CW_I \simeq S_m \times S_{n-m}$.
The map $\a_I$ is factored as $\a_I = \b_I\circ \xi_I$ by the 
map $\b_I: \wh\CY_I \to T^{\io\th}$. 
Let $\CE_I = \b_I^*\CE$ be a local system on $\wh\CY_I$, and $\CW_{\CE_I}$
the stabilizer of $\CE_I$ in $\CW_I$.  We put 
$\CW_{\CE_I} = \CW_{\Bm,\CE}$ if $I = [1,m]$.  
Then by
[SS, (3.4.3)] and by the remark below (3.4.3), we have 
$\End((\e_I)_*\CE_I) \simeq \Ql[\CW_{\CE_I}]$, the group algebra
of $\CW_{\CE_I}$, and $(\e_I)_*\CE_I$ is decomposed as 
\begin{equation*}
(\e_I)_*\CE_I \simeq \bigoplus_{\r \in \CW_{\CE_I}^{\we}}\r \otimes \CL_{\r},
\end{equation*}  
where $\CL_{\r} = \Hom (\r, (\e_I)_*\CE_I)$ is a simple local system 
on $\CY_m^0$.
\par 
Put $\wt\CW' = \CW' \ltimes (\BZ/2\BZ)^{n'}$, which is the direct product of
the Weyl group 
of type $C_{n'}$ and $S_{n-n'}$.  Let $\CW'_{\CE} = \CW' \cap \CW_{\CE}$ and  
$\CW'_{\Bm,\CE} = \CW' \cap \CW_{\CE_I}$ for $I = [1,m]$. 
We define 
$\wt\CW'_{\CE} = \CW'_{\CE} \ltimes (\BZ/2\BZ)^{n'}$, 
$\wt\CW'_{\Bm,\CE} = \CW'_{\Bm, \CE} \ltimes (\BZ/2\BZ)^{n'}$.
Then by a similar argument as in [SS, 3.5], we see that 
\begin{equation*}
\tag{1.3.1}
(\p'_m)_*\a_0^*\CE \simeq 
   H^{\bullet}(\BP_1^{n-n'})\otimes \bigoplus_{\r \in \CW_{\Bm,\CE}^{\wg}}
  \Ind_{\wt\CW'_{\Bm,\CE}}^{\wt\CW'_{\CE}}
         (H^{\bullet}(\BP_1^{n'-m})\otimes \r)\otimes \CL_{\r},
\end{equation*} 
where $\r$ is considered as a $\CW'_{\Bm,\CE}$-module by the restriction, 
and then is extended to $\wt\CW'_{\Bm,\CE}$-module (trivial extension), 
and $H^{\bullet}(\BP_1^{n'-m})$ is regarded as an 
$S_{n'-m}\ltimes (\BZ/2\BZ)^{n'-m}$-module through the Springer action 
of $(\BZ/2\BZ)^{n'-m}$ and the $S_{n'-m}$ action arising from permutations
of factors in $\BP^{n'-m}_1$. 
We extend $\r$ to a $\wt\CW'_{\Bm,\CE}$-module $\wt\r$ in such a way that 
the $i$-th factor $\BZ/2\BZ$ acts trivially on $\wt\r$ if $i \le m$,
and 
acts non-trivially otherwise. We define a 
$\wt\CW'_{\CE}$-module $\wt V'_{\r}$ by 
$\wt V'_{\r} = \Ind_{\wt\CW'_{\Bm,\CE}}^{\wt\CW'_{\CE}}\wt\r$. 
Note that $\wt V'_{\r}$ is not necessarily 
irreducible.  
The formula (1.3.1) can be rewritten as
\begin{equation*}
\tag{1.3.2}
(\p'_m)_*\a_0^*\CE \simeq 
            \biggl(\bigoplus_{\r \in \CW_{\Bm,\CE}\wg}
     \wt V_{\r}'\otimes \CL_{\r}\biggr)[-2(n-m)] + \CN'_m,
\end{equation*}
where $\CN'_m$ is a sum of various $\CL_{\r}[-2i]$ for $\r \in \CW_{\Bm,\CE}\wg$ 
with $0 \le i < n-m$ (cf. Appendix (a)). 

\par
For each $m \le n'$, let $\ol\p'_m$ be the restriction of $\p'$ on $(\p')\iv(\CY_m)$. 
We show the following formula, which is an analogue of (3.6.1*) in Appendix.
Note that $d_m = \dim \CY_m$.
\begin{equation*}
\tag{1.3.3}
\begin{split}
(\ol\p'_m)_*\a_0^*\CE
   &\simeq 
      \bigoplus_{0 \le m' \le m}\bigoplus_{\r \in \CW_{\Bm',\CE}\wg}
                \wt V'_{\r}\otimes\IC(\CY_{m'},\CL_{\r})
                       [-2(n -m')] + \ol\CN''_m,
\end{split}
\end{equation*}
where $\ol\CN''_m$ is a sum of various $\IC(\CY_{m'}, \CL_{\r})[-2i]$ for 
$\r \in \CW_{\Bm',\CE}\wg$ with $0 \le i < n - m'$.
\par
We show (1.3.3) by induction on $m$.  In the case where $m = 0$, 
$(\ol\p_m)_*\a_0^*\CE$ coincides with $(\p_m)_*\a_0^*\CE$.  Hence 
(1.3.3) holds by (1.3.2). 
Assume that (1.3.3) holds for $m-1$. Here $\CY_{m-1}$ is a closed 
subset of $\CY_m$ and $\CY_m^0 = \CY_m \backslash \CY_{m-1}$. 
The restriction of $(\ol\p'_m)_*\a_0^*\CE$ on $\CY_m^0$ (resp. on $\CY_{m-1}$) 
coincides with $(\p'_m)_*\a_0^*\CE$ (resp. $(\ol\p'_{m-1})_*\a_0^*\CE$).  
Since $(\ol\p'_m)_*\a_0^*\CE$ is a semisimple complex, it is a direct sum of 
various $A[i]$, where $A$ is a simple perverse sheaf. As in the discussion in 
the proof of (3.6.1*) in Appendix, 
if $\supp A \cap \CY_m^0 \ne \emptyset$, such $A$ can be given by 
the formula for $(\p'_m)_*\a_0^*\CE$ in (1.3.2).  If 
$\supp A \cap \CY_m^0 = \emptyset$, then $\supp A \subset \CY_{m-1}$, and 
such $A$ can be given by the formula (1.3.3) by induction hypothesis.
Now take $A$ such that $\supp A \cap \CY_m^0 \ne \emptyset$.  As discussed in Appendix, 
if there is no contribution of $A|_{\CY_{m-1}}$ to the former factors
of (1.3.3) for $(\ol\p'_{m-1})_*\a_0^*\CE$, we obtain the formula (1.3.3) 
for $m$. 
But by comparing (1.3.1) and (3.5.2) in Appendix (or in [SS]),
such $A$ is exactly the same $A$ appearing in the decomposition of $(\ol\psi_m)_*\a_0^*\CE$.
Then by the discussion in the proof of (3.6.1*) in Appendix, we see that 
$A|_{\CY_{m-1}}$ gives no contribution to the former factors  in 
(3.6.1*) for $(\ol\p_{m-1})_*\a_0^*\CE$.  Hence it gives no contribution for 
the former factors of (1.3.3) as asserted.  This proves (1.3.3) for $m$.       
\par
By considering the case where $m = n'$, we obtain the following result, which is 
an analogue of Proposition 3.6 in [SS] (see Appendix).
\begin{prop}  
$(\p')_*\a_0^*\CE[d_{n'}]$ is a semisimple complex on 
$\CY'$, equipped with $\wt\CW'_{\CE}$-action, and is 
decomposed as
\begin{equation*}
\p'_*\a_0^*\CE[d_{n'}] \simeq 
     \bigoplus_{0 \le m \le n'}
       \bigoplus_{\r \in \CW_{\Bm,\CE}^{\wg}}\wt V'_{\r} \otimes 
            \IC(\CY_m,\CL_{\r})[d_m - 2(n-n')] + \ol\CN'_{n'},
\end{equation*} 
where $\ol\CN'_{n'}$ is a sum of various $\IC(\CY_m, \CL_{\r})[d_m - 2i]$
for $\r \in \CW_{\Bm,\CE}\wg$ with $m - n' \le i < n - n'$. 
\end{prop}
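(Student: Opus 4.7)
The plan is to deduce the proposition as the ``top level'' case of the decomposition formula (1.3.3), combined with the $\wt\CW'_\CE$-action machinery already set up in 1.3. First I would observe that $\CY' = \CY_{n'}$ (by definition of the two varieties), so that the map $\ol\pi'_{n'}$ appearing in (1.3.3) is nothing but $\pi'$ itself, and the source $(\pi')^{-1}(\CY_{n'})$ is all of $\wt\CY'$. Thus it suffices to apply (1.3.3) at $m = n'$ and perform the degree shift by $[d_{n'}]$.

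The main bookkeeping step is reconciling the shifts. Formula (1.3.3) produces summands $\IC(\CY_{m'}, \CL_\rho)[-2(n-m')]$ for $0 \le m' \le n'$; after shifting by $d_{n'}$ one obtains $\IC(\CY_{m'}, \CL_\rho)[d_{n'} - 2(n-m')]$. I would then use the dimension relation $d_{n'} - d_{m'} = 2(n' - m')$ (which follows from the fact that $\CY_{m'}$ differs from $\CY_{n'}$ by replacing $M_{n'}$ with $M_{m'}$ in the orbit construction of 1.1) to rewrite the shift as $d_{m'} - 2(n-n')$, matching the exponent in the statement of the proposition. Similarly, the leftover part $\ol\CN''_{n'}[d_{n'}]$ of (1.3.3) produces a sum of $\IC(\CY_m, \CL_\rho)[d_m - 2i]$ with $0 \le i < n - m'$ originally, which after the shift translates into the range $m - n' \le i < n - n'$ claimed for $\ol\CN'_{n'}$.

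For the $\wt\CW'_\CE$-equivariance, I would invoke the $\CW'$-action on $(\wt\CY_m^+)'$ described in 1.2 and its extension to $\wt\CW'_\CE = \CW'_\CE \ltimes (\BZ/2\BZ)^{n'}$: the factor $(\BZ/2\BZ)^{n'}$ acts through the Springer action on the $\BP_1$-fibres of the maps $\xi_I$, and the combinatorics of this action is already encoded in the induced modules $\wt V'_\rho = \Ind^{\wt\CW'_\CE}_{\wt\CW'_{\Bm,\CE}}\wt\rho$ that appear in (1.3.1). The semisimplicity of $(\pi')_*\alpha_0^*\CE[d_{n'}]$ is then automatic once we have an explicit decomposition into shifted IC complexes.

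The main obstacle — already overcome in the derivation of (1.3.3) — is controlling how the principal perverse constituents over the open stratum $\CY_m^0$ extend over the boundary $\CY_{m-1}$ without spuriously contributing to lower IC terms. In my own write-up I would handle this exactly as in 1.3: proceed by induction on $m$, compare (1.3.1) with its analogue from Appendix/(3.5.2) to identify which simple summands $A$ have $\supp A \cap \CY_m^0 \ne \emptyset$, and invoke the Appendix argument to conclude that $A|_{\CY_{m-1}}$ contributes only to the error term and not to the principal IC summands. Since this inductive step has already been carried out, the proposition follows by simply specialising to $m = n'$ and shifting.
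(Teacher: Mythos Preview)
Your approach is correct and essentially identical to the paper's: the proposition is obtained from (1.3.3) by specialising to $m = n'$ (so that $\ol\psi'_{n'} = \psi'$ on all of $\wt\CY'$) and shifting by $d_{n'}$, using $d_{n'} - d_{m'} = 2(n'-m')$ to convert the degrees. One minor slip: the map in (1.3.3) is $\ol\psi'_m$, not $\ol\pi'_{n'}$ --- the $\pi$-notation is reserved for the maps on $\wt\CX$, while the proposition concerns $\psi'$ on $\wt\CY'$.
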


The following result is a generalization of Theorem 4.2 in [SS].

\begin{thm}  
Assume that $n' \le n$ and let 
$K_{n', T, \CE} = \pi'_*\a^*\CE[d_{n'}]$.  Then 
under the notation in Proposition 1.4, we have
\begin{equation*}
K_{n',T,\CE} \simeq 
      \bigoplus_{0 \le m \le n'}\bigoplus_{\r \in \CW_{\Bm,\CE}\wg}
             \wt V'_{\r} \otimes\IC(\CX_m,\CL_{\r})[d_m -2(n - n')] + \CM'_{n'},
\end{equation*}
where $\CM_{n'}$ is a sum of various $\IC(\CX_m, \CL_{\r})[d_m - 2i]$ 
for $\r \in \CW_{\Bm, \CE}\wg$ with $m - n' \le i < n - n'$.
\end{thm}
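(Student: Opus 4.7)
The result is the direct analogue of [SS, Theorem~4.2] (the case $n'=n$) for general $n'\le n$, and my plan is to mirror the proof given there, with Proposition~1.4 playing the role of [SS, Proposition~3.6]. The key geometric facts are that $\pi^{(n')}:\wt\CX_{n'}\to\CX_{n'}$ is proper (since $H/B^{\th}$ is projective), that $\wt\CX_{n'}\simeq H\times^{B^{\th}}(B^{\io\th}\times M_{n'})$ is smooth, and that $\a^{*}\CE$ is a (tame) local system on it.

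First I would invoke the BBD decomposition theorem: $K_{n',T,\CE}=(\pi^{(n')})_{*}\a^{*}\CE[d_{n'}]$ is a shift of the pushforward of a simple perverse sheaf along a proper map, hence a semisimple complex on $\CX_{n'}$, isomorphic to a direct sum of shifts of simple $H$-equivariant perverse sheaves $\IC(Z,\CL)[j]$ whose supports $Z$ are irreducible closed $H$-stable subsets of $\CX_{n'}$. Next I would restrict to the open subset $\CY_{n'}\subset\CX_{n'}$; the restriction equals $(\p')_{*}\a_0^{*}\CE[d_{n'}]$, and by Proposition~1.4 it decomposes as
\begin{equation*}
\bigoplus_{0\le m\le n'}\bigoplus_{\r\in\CW_{\Bm,\CE}\wg}
\wt V'_{\r}\otimes\IC(\CY_m,\CL_{\r})[d_m-2(n-n')]\;+\;\ol\CN'_{n'}.
\end{equation*}
Because $\CY_m=\CX_m\cap\CY_{n'}$ is open dense in $\CX_m$, we have $\IC(\CX_m,\CL_{\r})|_{\CY_{n'}}\simeq\IC(\CY_m,\CL_{\r})$, so every summand visible on $\CY_{n'}$ lifts uniquely to a summand of $K_{n',T,\CE}$ supported on $\CX_m$ with the same shift; reading off multiplicities from Proposition~1.4 yields both the main term and the correction $\CM'_{n'}$ of the asserted formula.

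The one remaining point — and the main technical obstacle — is to show that no simple summand $\IC(Z,\CL)[j]$ of $K_{n',T,\CE}$ has support $Z$ disjoint from $\CY_{n'}$. For this I would argue exactly as in the proof of [SS, Theorem~4.2]: $H$-equivariance forces each $Z$ to be the closure of an $H$-orbit, and a stratification-and-induction argument along the chain $\CX_0\subset\CX_1\subset\cdots\subset\CX_{n'}$ (parallel to the one used to establish (1.3.3), but applied on $\CX_{n'}$ rather than on $\CY_{n'}$) shows inductively that every new simple summand appearing on the stratum $\CX_m\setminus\CX_{m-1}$ is already of the form $\IC(\CX_m,\CL_\r)[j]$ with $\r\in\CW_{\Bm,\CE}\wg$. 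Equivalently, comparing $K_{n',T,\CE}$ with its restriction via the closed-open triangle for $\CY_{n'}\hookrightarrow\CX_{n'}$ rules out summands supported strictly in $\CX_{n'}\setminus\CY_{n'}$. Once this exclusion is settled, the asserted formula is a direct transcription of Proposition~1.4.
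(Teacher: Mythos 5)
Your overall skeleton (decomposition theorem, restriction to the open subset $\CY_{n'}$ where Proposition 1.4 applies, then exclusion of simple summands whose support misses $\CY_{n'}$) is sound, and you correctly isolate the exclusion step as the crux. But that step is exactly where your proposal has a genuine gap: neither of the two mechanisms you offer actually proves it. The claim that $H$-equivariance forces each support to be an $H$-orbit closure is false ($H$ has infinitely many orbits on $\CX$, so supports are merely $H$-stable irreducible closed sets), and the closed-open distinguished triangle for $\CY_{n'}\hra\CX_{n'}$ cannot by itself rule out summands supported in $\CX_{n'}\setminus\CY_{n'}$ — proper pushforwards routinely do have summands supported on smaller strata (the Springer resolution being the standard example), so some specific geometric input about $\pi'$ is indispensable. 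Likewise, "argue parallel to (1.3.3) but on $\CX_{n'}$" does not transport: in the proof of (1.3.3) the restriction to each stratum $\CY_m^0$ is automatically a sum of shifted local systems, so full support of the new summands is free; over $\CX_m^0$ this is precisely what has to be proved.

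What the paper supplies, and what is missing from your proposal, is the stratum-wise statement (1.5.3): the restriction $(\pi'_m)_*\a^*\CE$ over $\CX_m^0$ has all its simple constituents of the form $\IC(\CX_m^0,\CL_\r)$ up to shift, hence with support meeting $\CY_m^0$. This is obtained by showing (1.5.1) that $(\wt\CX_m^+)'=(\pi')\iv(\CX_m^0)$ decomposes into irreducible components $\wt\CX_I$ indexed by $I\subset[1,n']$, $|I|=m$, forming a subset of the components of $\wt\CX_m^+$ for the full map $\pi^{(n)}$; consequently (1.5.2) $(\pi'_m)_*\a^*\CE$ is a direct summand of $(\pi_m)_*\a^*\CE$, whose decomposition is known by [SS, Proposition 4.8] (this is where the heavy geometry of [SS, 4.3--4.8] enters, and it is not reproduced by any purely formal sheaf-theoretic argument). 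With (1.5.3) in hand, the exclusion you need follows, and the paper then runs the induction (1.5.4) along $\CX_0\subset\cdots\subset\CX_{n'}$, using the Appendix argument (4.9.1*) and the module structure on the multiplicity spaces, to pin down the multiplicities $\wt V'_\r$, the shifts $d_m-2(n-n')$, and the range $m-n'\le i<n-n'$ in $\CM'_{n'}$. So your plan becomes a proof only after you identify and establish the component decomposition (1.5.1)--(1.5.2) and the resulting reduction to the case $n'=n$; as written, the decisive point is asserted rather than proved.
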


\begin{proof}
Let $\CX_m^0 = \CX_m \backslash \CX_{m-1}$. 
Put $\wt\CX^+_m = (\pi^{(n)})\iv(\CX_m^0)$ and 
$(\wt\CX_m^+)' = (\pi')\iv(\CX_m^0)$.
Hence $(\wt\CX_m^+)'$ is a closed subvariety of $\wt\CX_m^+$.
In [SS, 4.3], for each subset $I$ of $[1,n]$ such that $|I| = m$, 
a subset $\CX_I$ of $\wt\CX_m^+$ was introduced, and it was proved
in [SS, Lemma 4.4] that $\wt\CX_m^+$ is a disjoint union of 
$\wt\CX_I$, where $I$ runs over the set of subsets of $[1,n]$ such that 
$|I| = m$, and $\wt\CX_I$ gives an irreducible component of $\wt\CX_m^+$.
This argument can be applied also for the case of $(\wt\CX_m^+)'$, 
and we have
\begin{equation*}
\tag{1.5.1}
(\wt\CX_m^+)' = \coprod_{\substack{ I \subset [1,n'] \\  |I| = m}}
                          \wt\CX_I 
       \subset \coprod_{\substack{ I \subset [1,n] \\ |I| = m}}
                          \wt\CX_I = \wt\CX_m^+.
\end{equation*}
In particular, the first equality gives the decomposition of $(\wt\CX_m^+)'$
into (a disjoint union of) irreducible components. 
\par
Let $\pi_m$ be the restriction of 
$\pi$ on $\wt\CX_m^+$, and $\pi_m'$ that of 
$\pi'$ on $(\wt\CX_m^+)'$.    
As in the case $\a_0^*\CE$, we denote the restriction of $\a^*\CE$ on 
various varieties such as $\wt\CX', (\wt\CX_m^+)'$, etc. by $\a^*\CE$. 
Then the decomposition of $\wt\CX_m^+$ into irreducible components
implies that $(\pi_m)_*\a^*\CE
   \simeq \bigoplus_I(\pi_I)_*\a^*\CE$, where 
$\pi_I: \wt\CX_I \to \CX_m^0$ is the restriction of $\pi$ 
to $\wt\CX_I$ for each $I \subset [1,n]$. 
Then it follows from (1.5.1) that 
\par\medskip\noindent
(1.5.2) \ $(\pi'_m)_*\a^*\CE$ is a direct summand 
of $(\pi_m)_*\a^*\CE$.    
\par\medskip
The following formula is an analogue of Proposition 4.8 in [SS].
\begin{equation*}
\tag{1.5.3}
(\pi'_m)_*\a^*\CE \simeq 
H^{\bullet}(\BP_1^{n-n'})\otimes 
    \bigoplus_{\r \in \CW_{\Bm,\CE}\wg} H^{\bullet}(\BP_1^{n'-m})
             \otimes V'_{\r}\otimes \IC(\CX_m^0, \CL_{\r}).
\end{equation*}
(Here $V'_{\r} = \Ind_{\CW'_{\Bm,\CE}}^{\CW'_{\CE}}\r$
is regarded as a vector space, ignoring the $\CW'_{\CE}$-module
structure, which coincides with $\wt V'_{\r}$ as a vector space.)  
We show (1.5.3).    $\CY_m^0$ is an open dense smooth subset of 
$\CX_m^0$, and the restriction of $(\pi_m')_*(\a')^*\CE$ on $\CY_m^0$
coincides with $(\p'_m)_*(\a'_0)^*\CE$.  By (1.3.1), 
we have a similar formula as (1.5.3) for 
$(\p'_m)_*(\a_0')^*\CE$.   Note that $(\pi_m')_*(\a')^*\CE$ is a semisimple 
complex. Hence, in order to show (1.5.3), 
it is enough to see that $\supp A \cap \CY_m^0 \ne \emptyset$ 
for any direct summand $A[i]$ ($A$: simple perverse sheaf) 
of $(\pi'_m)_*(\a')^*\CE$. But by (1.5.2), and Proposition 4.8 in [SS],
we see that any simple perverse sheaf appearing in the decomposition of
$(\pi_m')_*(\a')^*\CE$ is of the form $\IC(\CX_m^0,\CL_{\r})$ up to shift, 
hence its support 
has a non-trivial intersection with $\CY_m^0$.   Thus (1.5.3) is
proved. 
\par
Let $\ol\pi'_m$ be the restriction of $\pi'$ on $(\pi')\iv(\CX_m)$. 
By a similar discussion as in the proof of (1.3.3), by using 
(1.5.3) and (Appendix (4.9.1*)) instead of (1.3.1) and 
(Appendix (3.6.1*)), 
one can show 
the following formula for any $m$ such that $ 0 \le m \le n'$. 
\begin{equation*}
\tag{1.5.4}
\begin{split}
(\ol\pi'_m)&_*\a^*\CE[d_m]  \\ 
  &\simeq \bigoplus_{0 \le m' \le m}\bigoplus_{\r \in \CW_{\Bm',\CE}\wg}
    \wt V'_{\r}\otimes \IC(\CX_{m'}, \CL_{\r})[d_{m'}-2(n -m)] + \CM'_m, 
\end{split}
\end{equation*}
where $\CM'_m$ is a sum of various $\IC(\CX_{m'}, \CL_{\r})[d_{m'} - 2i]$ for 
$\r \in \CW_{\Bm',\CE}\wg$ with $m' - m \le i < n - m$. 
The theorem  follows from (1.5.4) by putting $m = n'$.
\end{proof}

\para{1.6.}
We consider the case where $n' > n$.
By fixing such an $n'$, we consider the objects 
$\CX', \CY', \pi', \psi'$, etc. as before. 
$M_{n'}$ can be written as 
$M_{n'} = M_{n_0}^{\perp}$ for an integer $n_0$, 
and if we choose a suitable symplectic basis 
$\{ e_1, \dots, e_n, f_1, \dots, f_n\}$ of $V$ consisting of 
eigenvectors for $T$, we have 
$M_{n'} = \lp e_1, \dots, e_{n_0}, e_{n_0+1}, f_{n_0+1}, 
\dots, e_n, f_n \rp$. 
For each $I \subset [1,n_0], J \subset [n_0+1, n]$, 
we define a subset $M_{I,J}$ of $M_{n'}$ by  
\begin{equation*}
M_{I,J} = M_I \times \prod_{j \in J}\lp e_j, f_j\rp^* 
\end{equation*}
where $M_I$ is as in 1.2, and 
$\lp e_i, f_i\rp^* = \lp e_i, f_i\rp \backslash  \{ 0\}$. 
Then $M_{I,J}$ is a $B^{\th} \cap Z_H(T^{\io\th})$-stable subset
of $M_{n'}$.  
We have a partition $M_{n'} = \coprod_{I,J}M_{I,J}$, and 
$\bigcup_{g \in H}g(T^{\io\th}\reg \times M_{I,J}) \subset \CY^0_m$ 
for $m = |I| + |J|$.
Put
\begin{equation*}
\wt\CY_{I,J} = H\times^{B^{\th}\cap Z_H(T^{\io\th})}
           (T^{\io\th}\reg \times M_{I,J}). 
\end{equation*}
Then $\wt\CY_{I,J}$ can be identified with a subset of $\wt\CY$. 
Put $(\wt\CY_m^+)' = (\psi')\iv(\CY_m^0)$ for $0 \le m \le n$.  
Then we have 
\begin{equation*}
  (\wt\CY_m^+)' = \coprod_{|I| + |J| = m}\wt\CY_{I,J}.
\end{equation*}
One can check that $\wt\CY_{I,J}$ are irreducible, and this gives
a decomposition of $(\wt\CY_m^+)'$ into irreducible components. 
But note that $\wt\CY_{I,J}$ are not equidimensional. 
As in 1.2, we denote by $\psi_m': (\wt\CY_m^+)' \to \CY_m^0$ the 
restriction of $\psi'$ on $(\wt\CY_m^+)'$, and 
$\psi_{I,J}: \wt\CY_{I,J} \to \CY_m^0$ the restriction of 
$\psi'$ on $\wt\CY_{I,J}$.
Then under a similar notation as in 1.2, we have
\begin{equation*}
\tag{1.6.1}
(\psi_m')_*(\a_0')^*\CE \simeq \bigoplus_{\substack{I \subset [1,n_0], 
J \subset [n_0+1, n] \\ |I| + |J| = m }}
     (\psi_{I,J})_*\a_{I,J}^*\CE.
\end{equation*}

\par
For $I \subset [1,n_0]$, we define 
a parabolic subgroup $Z_H(T^{\io\th})_I$ of $Z_H(T^{\io\th})$ 
as in [SS, 3.2].  Then $M_{I,J}$ is $Z_H(T^{\io\th})_I$-stable, and 
one can define 
\begin{equation*}
\wh \CY_{I,J} = H \times^{Z_H(T^{\io\th})_I}(T^{\io\th}\reg \times M_{I,J}). 
\end{equation*}
As in [SS, (3.2.1)], the map $\psi_{I,J}$ is decomposed as 
\begin{equation*}
\begin{CD}
\psi_{I,J}: \wt\CY_{I,J} @>\xi_{I,J}>>  \wh\CY_{I,J} @>\e_{I,J}>>  \CY_m^0,
\end{CD}
\end{equation*}
where $\xi_{I,J}$ is a locally trivial fibration with fibre isomorphic to
$\BP_1^{I'} \simeq \BP_1^{n-|I|}$.
Here we consider the variety 
\begin{equation*}
\wh\CY_{I\cup J} = H \times^{Z_H(T^{\io\th})_{I\cup J}}
                    (T^{\io\th}\reg \times M_{I\cup J}) 
\end{equation*}
 and the map
$\e_{I\cup J} : \wh\CY_{I\cup J} \to \CY_m^0$ as in [SS, 3.4], 
which is a finite Galois covering with group $\CW_{I\cup J}$. 
Note that $M_{I\cup J} \subset M_{I,J}$ and 
$Z_H(T^{\io\th})_{I\cup J} \subset Z_H(T^{\io\th})_I$.
It follows that the inclusion 
$T^{\io\th}\reg \times M_{I\cup J} \hra T^{\io\th}\reg \times M_{I,J}$
induces a map $\vf: \wh\CY_{I\cup J} \to \wh \CY_{I,J}$
such that $\e_{I\cup J} = \e_{I,J}\circ \vf$.  
Here we have 
\begin{equation*}
\wh\CY_{I\cup J} \simeq H/P_1 \times T^{\io\th}\reg,
\quad
\wh\CY_{I,J} \simeq H/P_2 \times T^{\io\th}\reg,
\end{equation*} 
where $P_1 = Z_{Z_H(T^{\io\th})_{I\cup J}}(z)$, 
$P_2 = Z_{Z_H(T^{\io\th})_I}(z)$ for 
$z \in T^{\io\th}\reg \times M_{I\cup J}$.
Since $P_1 = P_2$, we see that $\vf$ gives an isomorphism 
$\wh\CY_{I\cup J} \isom \wh\CY_{I,J}$. 
The map $\b_{I,J}: \wh\CY_{I,J} \to T^{\io\th}$ is defined 
similarly to $\b_{I\cup J}$ as in [SS, 3.4].  We put 
$\CE_{I,J} = \b_{I,J}^*\CE$ and $\CE_{I\cup J} = \b_{I\cup J}^*\CE$.
Then we have 
\begin{equation*}
\tag{1.6.2}
(\e_{I, J})_*\CE_{I, J} \simeq  (\e_{I\cup J})_*\CE_{I \cup J}.
\end{equation*}  
Thus as in (3.5.2) in [SS], we have by (1.6.1) and (1.6.2),
\begin{equation*}
  \tag{1.6.3}
(\psi_m')_*(\a'_0)^*\CE|_{(\wt\CY^+_m)'}
  \simeq \bigoplus_{\substack{I,J \\
      |I|+|J| = m }}
         \bigoplus_{\r \in \CW\wg_{I\cup J,\CE}}
    H^{\bullet}(\BP_1^{I'})\otimes\r\otimes \CL_{\r}.
\end{equation*}

\para{1.7.}
For each $1 \le m \le n$, let $\CX_m^0 = \CX_m\backslash \CX_{m-1}$
as before.
For each $(x,v) \in G^{\io\th} \times V$, one can associate a subspace
$W(x,v)$ of $V$ as in [SS, 4.3], and we have by [SS, (4.3.3)] 
\begin{equation*}
\CX_m^0 = \{ (x,v) \in G^{\io\th} \times V \mid \dim W(x,v) = m\}.
\end{equation*}
Put $(\wt\CX_m^+)' = (\pi')\iv(\CX_m^0)$.
For $(x,v,gB^{\th}) \in (\wt\CX_m^+)'$, we define its level $(I,J)$
($I \subset [1,n_0], J \subset [n_0+1, n]$) as follows;
assume that $(x,v) \in B^{\io\th} \times M_{n'}$ and that $x = su$, 
the Jordan decomposition of $x$ with $s \in T^{\io\th}$.  
$V$ is decomposed into eigenspaces of $s$ as 
$V = V_1 \oplus \cdots \oplus V_t$.  Then $v \in M_{n'}$ can 
be written as $v = v_1 + \cdots + v_t$
with $v_i \in M_{n'} \cap V_i$ since $M_{n'}$ is $s$-stable.
Let $u_i$ be the restriction of $u$ on $V_i$. 
Thus $(u_i,v_i) \in (G_i)\uni^{\io\th} \times V_i$ with $G_i = GL(V_i)$.
Now $M_{n'} \cap V_i$ has a basis which is  
a subset of the basis $\{ e_1, \dots, e_n, f_1, \dots, f_n\}$.
By using this basis, we decompose  
$M_{n'} \cap V_i = E'_i \oplus E''_i$ with $E_i' = M_{n_0} \cap V_i$.   
Let $\ol v_i$ be the image of $v_i$ in $(M_{n'}\cap V_i)/E_i'$, and $\ol u_i$ 
the linear map on $(M_{n'}\cap V_i)/E_i'$ induced from $u_i$.
Let $m_i = \dim \Bk[u_i]v_i$ and $m_i'' = \dim \Bk [\ol u_i]\ol v_i'$, where 
$\Bk[u_i]v_i = \lp v_i, u_iv_i, u_i^2v_i, \dots \rp$, and similarly 
for $\Bk[\ol u_i]\ol v_i'$. 
We have $m_i'' \le m_i$, and $\sum_im_i = m$ since $(x,v) \in \CX_m^0$.
Put $m_i' = m_i - m_i''$. 
Then $m_i' = \dim (\Bk[u_i]v_i \cap E_i')$. 
The basis of $V_i$ is given by $\{e_j, f_j \mid j \in K_i\}$ for a 
subset $K_i$ of $[1,n]$. 
Then the basis of $E'_i$ is given by $\{ e_j \mid j \in K_i'\}$ and 
the basis of $E_i''$ is given by $\{ e_j, f_j \mid j \in K_i''\}$ 
for subsets $K'_i = [1,n_0] \cap K_i, 
K_i'' = [n_0+1, n] \cap K_i$.  
As in [SS, 4.3], we define a subset $I_i$ of $[1,n_0]$ as the first 
$m_i'$ letters in $K_i'$, and put $I = \coprod_i I_i$.  
Similarly, we define a subset $J_i$ of $[n_0+1, n]$ as the first 
$m_i''$ letters in $K_i''$, and put $J = \coprod_iJ_i$. 
From the construction, we have 
$I \subset [1,n_0], J \subset [n_0+1, n]$ and 
$|I| + |J| = m$,  and the attachment $(x,v) \mapsto (I,J)$ depends 
only on the $B^{\th}$-conjugate of $(x,v)$. Thus we have a well-defined
map $(x,v,gB^{\th}) \mapsto (I,J)$.  We define a subvariety 
$\wt\CX_{I,J}$ of $(\wt\CX_m^+)'$ by 
\begin{equation*}
\wt\CX_{I,J} = \{ (x,v, gB^{\th}) \in (\wt\CX_m^+)'
                   \mid (x,v,gB^{\th}) \mapsto (I,J) \}.
\end{equation*} 
Then $\wt\CY_{I,J}$ is an open dense subset of $\wt\CX_{I,J}$ for 
each $I,J$.  The following lemma can be proved in a similar way 
as [SS, Lemma 4.4].

\begin{lem}  
\begin{equation*}
(\wt\CX_m^+)' = \coprod_{\substack{
       I \subset [1,n_0], J \subset [n_0+1,n] \\
                            |I| + |J| = m }}  \wt\CX_{I,J}.
\end{equation*}
\end{lem}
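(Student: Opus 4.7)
The lemma comes down to showing that the level assignment $(x,v,gB^{\th}) \mapsto (I,J)$ introduced in 1.7 is defined on the whole of $(\wt\CX_m^+)'$, takes values in the claimed index set $\{(I,J) \mid I\subset [1,n_0],\ J\subset [n_0+1,n],\ |I|+|J|=m\}$, and is independent of the choice of $B^{\th}$-coset representative $g$. Granted these facts, disjointness of $\wt\CX_{I,J}$ for distinct $(I,J)$ and their coverage of $(\wt\CX_m^+)'$ are tautological from the definition.

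First I would fix $(x,v,gB^{\th}) \in (\wt\CX_m^+)'$ and reduce to $g = 1$, so that $(x,v) \in B^{\io\th} \times M_{n'}$ and $(x,v) \in \CX_m^0$. Writing $x = su$ with $s \in T^{\io\th}$, the $s$-eigenspace decomposition $V = V_1 \oplus \cdots \oplus V_t$ is compatible with the fixed symplectic basis and each $V_i$ is $u$-stable; moreover $M_{n'}$ and $M_{n_0}$ are $s$-stable, so they restrict componentwise. Using the description $W(x,v) = \bigoplus_i \Bk[u_i]v_i$ from [SS, 4.3], the membership $(x,v)\in \CX_m^0$ translates into $\sum_i m_i = m$, and the splitting $M_{n'}\cap V_i = E_i'\oplus E_i''$ refines this as $m_i = m_i' + m_i''$. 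The first-letter rule then produces $I_i \subset K_i' \subset [1,n_0]$ and $J_i \subset K_i'' \subset [n_0+1,n]$ of the required cardinalities, so that $(I,J) = (\coprod_i I_i, \coprod_i J_i)$ satisfies $I\subset [1,n_0]$, $J\subset [n_0+1,n]$, and $|I|+|J| = m$.

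The crux is the $B^{\th}$-invariance of this construction. If $b \in B^{\th}$, then the pair $(x,v)$ is replaced by $(b^{-1}xb,\, b^{-1}v)$, and the Jordan factors become $(b^{-1}sb, b^{-1}ub)$ with $b^{-1}sb \in T^{\io\th}$. Conjugation by $b$ permutes the eigenspaces $V_i$ according to the image of $b$ in $B^{\th}/(B^{\th}\cap Z_H(T^{\io\th}))$; however, since $B^{\th}$ stabilizes the filtration $M_0 \subset \cdots \subset M_{2n}$, it preserves both $M_{n'}$ and $M_{n_0}$, so the splittings $E_i', E_i''$ (and the basis subsets $K_i', K_i''$) transform compatibly. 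Consequently $m_i, m_i', m_i''$ are unchanged, and the first-letter rule recovers the same subsets $(I_i, J_i)$ up to the induced permutation, whose disjoint unions yield the same $(I,J)$.

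This argument runs completely parallel to [SS, Lemma 4.4]; the main additional bookkeeping, and the step I expect to be most delicate, is tracking how $B^{\th}\cap Z_H(T^{\io\th})$ acts on the refined decomposition $K_i = K_i'\sqcup K_i''$ to confirm coherence of the first-letter choices in each half — an issue which does not arise in the unsplit setting of [SS, Lemma 4.4]. Once this invariance is verified, the lemma follows immediately.
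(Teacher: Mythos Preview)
Your overall strategy is exactly what the paper intends (it simply refers to [SS, Lemma~4.4]), and the reduction to checking that the level map $(x,v,gB^{\th})\mapsto (I,J)$ is well defined on $B^{\th}$-orbits is the right one. However, the $B^{\th}$-invariance step, as you have written it, is not correct.

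You assert that for arbitrary $b\in B^{\th}$ the Jordan semisimple part $b^{-1}sb$ again lies in $T^{\io\th}$, and that conjugation by $b$ ``permutes the eigenspaces $V_i$''. Neither is true in general: if $b=t_0u_0$ with $u_0\in U^{\th}$ not centralizing $s$, then $b^{-1}sb=u_0^{-1}su_0\in sU$ lies off $T$, so the construction of 1.7 cannot even be applied to $(b^{-1}xb,b^{-1}v)$. And $B^{\th}$, being connected solvable, has trivial image in the Weyl group, so it never permutes the $V_i$ nontrivially. What you must actually compare are two representatives of the same $B^{\th}$-orbit \emph{both} of which have semisimple part in $T^{\io\th}$. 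For such a pair, write $b=t_0u_0$; then $b^{-1}sb=u_0^{-1}su_0\in sU\cap T$ forces $u_0\in Z_U(s)$, hence $b\in Z_{B^{\th}}(s)$. Thus $b$ fixes $s$, fixes each eigenspace $V_i$, and acts on $V_i$ through $B^{\th}\cap GL(V_i)$, which stabilizes the full flag $(M_k\cap V_i)_k$. From this the invariance of the numbers $m_i,m_i',m_i''$ and of the first-letter subsets $I_i,J_i$ follows immediately, without any ``permutation'' bookkeeping. With this correction your argument goes through and coincides with the intended one.
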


\para{1.9}
As in [SS, 4.5] we consider the spaces 
$W_0 = M_m, \ol V_0 = W_0^{\perp}/W_0$, 
and put $G_1 = GL(W_0), G_2 = GL(\ol V_0)$. Then $\ol V_0$ 
has a natural symplectic structure, and $G_2$ is identified with 
a $\th$-stable subgroup of $G$.
Let $B_1$ be a Borel subgroup of $G_1$ which is the stabilizer of 
the flag $(M_k)_{0 \le k \le m}$, and $B_2$ be a $\th$-stable Borel
subgroup of $G_2$ which is the stabilizer of the flag 
$(M_{m+1}/M_m \subset \cdots \subset M_m^{\perp}/M_m)$ in $G_2$.
Put 
\begin{align*}
\wt G_1 &= \{ (x,gB_1) \in G_1 \times G_1/B_1 \mid g\iv x g \in B_1\}, \\
\wt G_2^{\io\th} &= \{ (x,gB_2^{\th}) \in G_2^{\io\th} 
          \times G_2^{\th}/B_2^{\th} \mid g\iv xg \in B_2^{\io\th}\},
\end{align*}
and define maps $\pi^1: \wt G_1 \to G_1, 
       \pi^2: \wt G_2^{\io\th} \to G_2^{\io\th}$ by first projections.
For a fixed $m \le n$, we consider the varieties 
\begin{align*}
\CG_m^0 &= \{ (x,v, W) \mid (x,v) \in G^{\io\th} \times V, 
                     W = W(x,v), \dim W = m\} \simeq \CX_m^0, \\
\CH^0_m &= \{(x,v, W, \f_1, \f_2) \mid (x,v,W) \in \CG^0_m,  \\
             &\phantom{*****} 
           \f_1: W \isom W_0, \f_2: W^{\perp}/W \isom \ol V_0 
   \text{ (symplectic isom.) }\},  
\end{align*} 
and morphisms
\begin{align*}
q_1&: \CH_m^0 \to \CG_m^0, \quad (x,v,W,\f_1, \f_2) \mapsto (x,v,W), \\
\s_1&: \CH_m^0 \to G_1 \times G_2^{\io\th}, 
        (x,v,W, \f_1, \f_2) \mapsto 
             (\f_1(x|_W)\f_1\iv, \f_2(x|_{W^{\perp}/W})\f_2\iv). 
\end{align*}

Extending the discussion in [SS, 4.8], we define a subset 
$\wt\CX_{I,J}^{\spade}$ of $(\wt\CX_m^+)'$ as follows;
For $(x, v) \in G^{\io\th} \times M_{n'}$ with 
$x = su \in G^{\io\th}$ such that $s \in T^{\io\th}$, 
the subsets $K_i, K_i', K_i''$ of $[1,n]$ are defined as before. 
We define $\wt\CX_{I,J}^{\spade}$ as the set of all $H$-conjugates 
of $(x,v, w_{\g}B^{\th}) \in (\wt\CX_m^+)'$ such that 
$|w_{\g}\iv(K'_i) \cap I| = m_i'$ and 
$|w_{\g}\iv(K''_i) \cap J| = m_i''$ for each $i$ and for all the 
possible choice of $(x,v)$ and $w_{\g} \in W_H$, where  
$w_{\g}$ is the distinguished representative of the coset 
$\g  \in \vG = W_{H,s}\backslash W_H$.  
Then $\wt\CX_{I,J}^{\spade}$ is a closed subset of $(\wt\CX_m^+)'$ 
containing $\wt\CX_{I,J}$. 
($\wt\CX_{I,J}$ coincides with the $H$-conjugates of $(x,v, w_{\g}B^{\th})$
such that $w_{\g}\iv(K'_i) \cap I$ consists of the first $m_i'$ letters
in $w_{\g}\iv (K_i')$ 
and that $w_{\g}\iv(K''_i) \cap J$ consists of the first $m_i''$ letters 
in $w_{\g}\iv(K_i'')$).
We define a map $\vf_{I,J} : \wt\CX_{I,J}^{\spadesuit} \to \CG_m^0$ by 
$(x,v, gB^{\th}) \mapsto (x,v, W(x,v))$. 
Define a variety
\begin{equation*}
\CZ_{I,J}^{\spadesuit} = 
  \{ (x,v, gB^{\th}, \f_1, \f_2) \mid 
    (x,v,gB^{\th}) \in \wt\CX_{I,J}^{\spadesuit}, 
   \f_1: U \isom W_0, \f_2: U^{\perp}/U \isom \ol V_0\},
\end{equation*}
where $U = W(x,v)$, and define maps 
\begin{align*}
q_{I,J} &: \CZ_{I,J}^{\spadesuit} \to \wt\CX_{I,J}^{\spadesuit},  
(x,v, gB^{\th}, \f_1, \f_2) \mapsto (x,v, gB^{\th}), \\ 
\wt\vf_{I,J}&: \CZ_{I,J}^{\spadesuit} \to \CH_m^0,  
(x,v,gB^{\th}, \f_1, \f_2) \mapsto (x,v, W(x,v), \f_1, \f_2).
\end{align*}
We define a map $\s_{I,J}: \CZ_{I,J}^{\spadesuit} \to 
    \wt G_1 \times \wt G_2^{\io\th}, 
(x,v, gB^{\th}, \f_1, \f_2) \mapsto ((x_1, g_1B_1), (x_2, g_2B^{\th}_2))$ 
as follows;  
let $U = W(x,v)$, and put $x_1 = \f_1(x|_U)\f_1\iv \in G_1$, 
$x_2 = \f_2(x|_{U^{\perp}/U})\f_2\iv \in G_2^{\io\th}$. 
Let $(M_k)_{0 \le k \le 2n}$ be the total flag in $V$ as before, 
and let $(U_i)$ be the total flag in $U$ obtained from 
$(U \cap g(M_k))_{0 \le k \le 2n}$.  This defines an $x_1$-stable
total flag in $W_0$ via $\f_1$, and we denote the corresponding element 
in $G_1/B_1$ by $g_1B_1$.  Thus $(x_1, g_1B_1) \in \wt G_1$.     
Contrast to the case in [SS, 4.5], $g(M_n)$ is not necessarily 
contained in $U^{\perp}$, but one can check that 
$(g(M_n) \cap U^{\perp}) + U/U$ is a maximal isotropic subspace in 
$\ol U = U^{\perp}/U$.  Hence we obtain an isotropic flag $(\ol U_j)$ on 
$\ol U $ from $((g(M_k) \cap U^{\perp})+U/U)_{0 \le k \le n}$.
This defines an $x_2$-stable isotropic flag in $\ol V_0$ via $\f_2$,
and $(x_2, g_2B_2^{\th}) \in \wt G_2^{\io\th}$ is determined from 
this.  Thus $\s_{I,J}$ is defined. 
\par
We have a commutative diagram
\begin{equation*}
\tag{1.9.1}
\begin{CD}
\wt G_1 \times \wt G_2^{\io\th} @<\s_{I,J}<<  \CZ_{I,J}^{\spade} 
             @>q_{I,J}>> \wt\CX_{I,J}^{\spade}  \\
@V\pi^1 \times \pi^2 VV   @VV\wt\vf_{I,J}V   @VV\vf_{I,J} V                 \\
   G_1 \times G_2^{\io\th} @<\s_1<<  \CH_m^0  @> q_1>>  \CG_m^0, 
\end{CD}
\end{equation*}
where the right hand square is a cartesian square. 
We consider the fibre product $\ZC_m$ of
$\wt G_1 \times \wt G_2^{\io\th}$ with $\CH_m^0$ over 
$G_1 \times G_2^{\io\th}$,  and let 
$c_{I,J}: \CZ_{I,J}^{\spadesuit} \to \ZC_m$ be the natural map 
obtained from the diagram (1.9.1).  
Note that $\ZC_m$ is the same variety given in [SS, 4.8].
Let $\ZC_m = \coprod_{\a} \ZC_{\a}$ be the $\a$-partition of 
$\ZC_m$ defined in [loc. cit.].
We show by a similar argument as in the proof of [SS, (4.8.2)] that 
\par\medskip\noindent
(1.9.2) \ 
The restriction of $c_{I,J}$ on $c_{I,J}\iv(\ZC_{\a})$ is 
a locally trivial fibration over $\ZC_{\a}$.
\par\medskip
In fact, under the notation in [SS, 4.8], for 
$z = (x,v, \f_1, \f_2, (\ol U_i)) \in \ZC_{\a}$ with $x$ : unipotent, 
the fibre $c_{I,J}\iv(z)$ is given as 
\begin{equation*}
c_{I,J}\iv(z) \simeq \{ (V_k) \in \CF_x^{\th}(V) \mid 
              V_{n_0} \subset U^{\perp},  
             ((V_k\cap U^{\perp}) + U/U) \to (\ol U_j) \}.
\end{equation*}
(In this case, $I = [1,m_1']$ and $J = [n_0+1, n_0 + m_1'']$ 
with $m = m_1' + m_1''$.)  Then as in [SS, 4.8], 
$c_{I,J}\iv(\ZC_{\a,\unip}) \to \ZC_{\a,\unip}$ 
gives a locally trivial fibration, where $\ZC_{\a,\unip}$ is the 
restriction of $\ZC_{\a}$ to its unipotent part. 
On the other hand, under the general setting, 
take $(x,v, U, \f_1, \f_2) \in \CH_m^0$
and $(x_1, g_1B_1) \in \wt G_1$, 
$(x_2,g_2B_2^{\th}) \in \wt G_2^{\io\th}$ so that it gives 
an element $z \in \ZC_{\a}$.  Then $\g_1 \in \vG_1$ is determined 
from $(x_1, g_1B_1)$ and $\g_2 \in \vG_2$ is determined from 
$(x_2, g_2B_2^{\th})$, where $\vG_1, \vG_2$ are given by 
the semisimple part $s_1, s_2$ of $x_1, x_2$.  
Now $\g \in \vG$ is determined from $(\g_1, \g_2)$ as follows;
we assume that the semisimple part $s$ of $x$ is contained in 
$T^{\io\th}$.  The sets $K_i', K_i''$ are determined as before. 
Put $K_I = \coprod_iK_i' \cap I$ and $K_J = \coprod_iK_i'' \cap J$. 
Put $K^+ = K_I \cup K_J$ and let $K^-$ be the complement of $K^+$ in $[1,n]$.  
Then $w_{\g_1}$ 
(resp. $w_{\g_2}$) is regarded via $\f_1$ (resp. $\f_2$) 
as a permutation on $K^+$ (resp. $K^-$).  We define a permutation 
$w \in S_n$ by
\begin{equation*}
w\iv(j) = \begin{cases}
             w_{\g_1}\iv(j) &\text{ if } j \in K^+, \\
             w_{\g_2}\iv(j) &\text{ if } j \in K^-.
           \end{cases}
\end{equation*}
We take $\g = W_{H,s}w \in W_{H,s}\backslash W_H$.  
Then $w_{\g}$ satisfies the condition that 
$|w_{\g}\iv(K_i) \cap I| = m_i'$ and 
$|w_{\g}\iv(K_i) \cap J| = m_i''$ for each $i$.
We have a bijection between $(\g_1, \g_2)$ arising from
$\wt G_1 \times \wt G_2^{\io\th}$ and $\g$ arising from 
$\CZ_{I,J}^{\spadesuit}$.
Then for such $z \in \ZC_{\a}$, $c_{I,J}\iv(z)$ is contained 
in $\CF_u^{\g}(V)$, and by using the argument in the unipotent 
case, we see that $c_{I,J}\iv(\ZC_{\a}) \to \ZC_{\a}$ is 
a locally trivial fibration. Hence (1.9.2) is proved.  
\par
By using a similar argument as in the proof of Proposition 4.8 
in [SS1], it follows from (1.6.3) that
\par
\begin{prop} 
$(\pi_m')_*(\a')^*\CE|_{(\wt\CX_m^+)'}[d_m]$ is a semisimple complex
and is decomposed as 
\begin{equation*}
(\pi_m')_*(\a')^*\CE|_{(\wt\CX_m^+)'}[d_m] \simeq
\bigoplus_{\substack{I,J \\ |I| + |J| = m}}\bigoplus_{\r \in \CW_{I\cup J,\CE}\wg}
      H^{\bullet}(\BP_1^{I'})\otimes\r
               \otimes \IC(\CX_m^0, \CL_{\r})[d_m].
\end{equation*}  
\end{prop}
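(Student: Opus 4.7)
The proposition is the analogue, for the case $n' > n$, of Proposition 4.8 in [SS] (which handled $n'=n$). My plan is to adapt the proof of [SS, Proposition 4.8], using the decomposition in Lemma 1.8, the commutative diagram (1.9.1), and the local triviality property (1.9.2) in place of the analogous objects from [SS, \S4.5--4.8], and to use (1.6.3) to read off the answer on the open dense smooth stratum.

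First I would work on the open dense smooth subset $(\wt\CY_m^+)'$ of $(\wt\CX_m^+)'$. The formula (1.6.3) gives the restriction of $(\pi_m')_*(\a')^*\CE$ to $\CY_m^0$, displaying each summand $\CL_\r$ with the prescribed multiplicity $H^\bullet(\BP_1^{I'})\otimes \r$. Since $\CY_m^0$ is open and dense in $\CX_m^0$, any simple perverse constituent of $(\pi_m')_*(\a')^*\CE|_{(\wt\CX_m^+)'}[d_m]$ whose support meets $\CY_m^0$ must be of the form $\IC(\CX_m^0,\CL_\r)[d_m]$ for some $\r \in \CW_{I\cup J,\CE}\wg$, appearing with the multiplicity dictated by (1.6.3).

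Next I would establish semisimplicity of the complex and rule out summands supported entirely on $\CX_m^0 \setminus \CY_m^0$. Semisimplicity follows from the decomposition theorem, since $\pi_m'$ is the restriction of the proper map $\pi'$ to $(\wt\CX_m^+)' = (\pi')\iv(\CX_m^0)$ and $(\a')^*\CE$ is a shifted local system. To control supports, I would partition $(\wt\CX_m^+)'$ via Lemma 1.8 and analyse each piece using the diagram (1.9.1): the right-hand square is cartesian, and (1.9.2) shows that $c_{I,J}$ is a locally trivial fibration over each $\a$-stratum $\ZC_\a$, with fibres of the same shape as in the unipotent model of [SS, 4.8]. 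Combining smooth base change with the known Springer-type factorisation of $(\pi^1 \times \pi^2)_*$ on $\wt G_1 \times \wt G_2^{\io\th}$ then forces every simple perverse summand on $(\wt\CX_m^+)'$ to have support meeting $\CY_m^0$, so only the summands already identified in Step 1 can occur.

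The main obstacle is this last step: matching, over each stratum $\ZC_\a$, the pushforward from the two factors $\wt G_1$ (standard $GL$-Springer theory, responsible for the $H^\bullet(\BP_1^{I'})$ factor) and $\wt G_2^{\io\th}$ (Springer theory for the symmetric space, responsible for the $\r$-isotypic local systems), and assembling these product formulas coherently across all $\a$-strata. This is exactly what [SS, Proposition 4.8] does in the case $n' = n$, and (1.9.2) is designed to transport it to the present situation; the chief additional care needed here is that the strata $\wt\CX_{I,J}$ are not equidimensional, so one must work with the larger closed subsets $\wt\CX_{I,J}^{\spade}$ and monitor the shifts $[d_m]$ throughout.
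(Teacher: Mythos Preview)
Your proposal is correct and follows essentially the same approach as the paper. The paper's own argument is in fact just the one-line remark ``By using a similar argument as in the proof of Proposition~4.8 in [SS], it follows from (1.6.3) that \ldots'', with all of the real work having been done in the preceding paragraphs~1.6--1.9 (the decomposition of $(\wt\CX_m^+)'$ in Lemma~1.8, the diagram (1.9.1), and the local triviality (1.9.2)); your plan correctly identifies precisely these ingredients and how they feed into the adaptation of [SS, Proposition~4.8].
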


We can now prove

\begin{thm}  
Assume that $n' > n$ and let 
$K_{n', T,\CE} = \pi'_*(\a')^*\CE[d_{n'}]$.  
Then $K_{n',T,\CE}$ is a semisimple complex in $\DD(\CX)$, 
and its simple summands (up to shift) are
contained in the set 
$\bigcup_{0 \le m \le n}\{ \IC(\CX_m,\CL_{\r})[d_m] \mid \r \in 
            \CW_{\Bm,\CE}\wg \}$. 
\end{thm}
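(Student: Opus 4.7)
The plan is to follow the template of the proof of Theorem 1.5 (the case $n' \le n$), but relying on Proposition 1.10 in place of formula (1.5.3). Since the statement asks only for a bound on the set of simple constituents, and not for their multiplicities, the argument is in fact more direct than in that earlier case.

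First I would verify that $K_{n', T, \CE}$ is a semisimple complex on $\CX$. The source $\wt\CX_{n'} \simeq H \times^{B^{\th}}(B^{\io\th} \times M_{n'})$ is smooth, the map $\pi': \wt\CX_{n'} \to \CX_{n'}$ is proper, and $(\a')^*\CE[d_{n'}]$ is a shifted tame local system, hence a semisimple perverse sheaf on $\wt\CX_{n'}$. Semisimplicity of $K_{n', T, \CE}$ then follows from the decomposition theorem, exactly as in the case $n' \le n$. Moreover, by the description $\CX_m^0 = \{(x, v) \mid \dim W(x, v) = m\}$ recalled in 1.7, together with the bound $\dim W(x, v) \le n$ from [SS, 4.3], we have $\CX_m^0 = \emptyset$ for $m > n$, so in fact $\CX_{n'} = \CX_n = \CX$ and $\CX = \coprod_{0 \le m \le n} \CX_m^0$.

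Now let $A$ be any simple perverse sheaf such that $A[i]$ is a direct summand of $K_{n', T, \CE}$ for some $i \in \BZ$. Its support is an irreducible $H$-stable closed subvariety of $\CX$, hence equals $\CX_m$ for a unique $m$ with $0 \le m \le n$. The restriction $A|_{\CX_m^0}[i]$ is then a direct summand of $(\pi'_m)_*(\a')^*\CE|_{(\wt\CX_m^+)'}$, which by Proposition 1.10 is, up to a uniform shift by $d_{n'} - d_m$, a direct sum of shifts of $\CL_{\r}[d_m]$ for various $\r \in \CW_{I \cup J, \CE}^\wg$ with $I \subset [1, n_0]$, $J \subset [n_0+1, n]$ and $|I| + |J| = m$. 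Consequently $A \simeq \IC(\CX_m, \CL_{\r})[j]$ for some such $\r$ and some integer $j$.

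The final task is to recognize each such $\r \in \CW_{I \cup J, \CE}^\wg$ as an element of $\CW_{\Bm, \CE}^\wg$ in the notation of Section 1.2, so that the corresponding $\CL_{\r}$ matches one of the local systems named in the statement. This is essentially bookkeeping: $\CW_{I \cup J}$ is $\CW$-conjugate to the standard subgroup $\CW_{[1,m]} = \CW_{\Bm}$, and the associated Galois covers and local systems on $\CY_m^0$ are identified under this conjugation, the compatibility being already encoded in the isomorphism $\wh\CY_{I \cup J} \isom \wh\CY_{I, J}$ used in (1.6.2). I expect the only real obstacle of the proof to lie in cleanly carrying out this comparison of local systems across different pairs $(I, J)$, but it reduces to a direct unwinding of the definitions of the covers $\e_{I \cup J}$ and $\e_{I, J}$. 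Combining the three steps yields the stated bound on the simple summands of $K_{n', T, \CE}$.
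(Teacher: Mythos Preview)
Your approach is essentially the same as the paper's: both rely on the decomposition theorem for semisimplicity and on Proposition 1.10 to identify the simple summands stratum by stratum. The paper phrases this as an induction on $m$ via the restricted maps $\ol\pi'_m : (\pi')\iv(\CX_m) \to \CX_m$, in the style of [SS, 4.9] and Appendix II, whereas you do it directly by locating, for each simple summand $A$, the top stratum meeting its support. These are two packagings of the same idea.

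There is, however, one genuine slip in your step 3. You write that $\supp A$ is an irreducible $H$-stable closed subvariety of $\CX$, ``hence equals $\CX_m$ for a unique $m$.'' That ``hence'' is false as stated: there are many irreducible $H$-stable closed subvarieties of $\CX$ that are not of the form $\CX_m$ (closures of $H$-orbits, for instance). What you should do instead is take $m$ to be the largest index with $\supp A \cap \CX_m^0 \ne \emptyset$; then $\supp A \subset \CX_m$, and since $\CX_m^0$ is open in $\CX_m$, the restriction $A|_{\CX_m^0}$ is a nonzero simple perverse sheaf. Proposition 1.10 then forces $A|_{\CX_m^0} \simeq \IC(\CX_m^0, \CL_\r)[d_m]$, whose support is all of $\CX_m^0$; from this you deduce $\supp A = \CX_m$ \emph{a posteriori}, and $A = \IC(\CX_m, \CL_\r)[d_m]$ by uniqueness of intermediate extension. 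With this correction your argument is complete; the final bookkeeping about $\CW_{I\cup J,\CE}$ versus $\CW_{\Bm,\CE}$ is indeed routine, since the Galois covers $\e_{I\cup J}$ for different $m$-element subsets are $\CW$-conjugate and produce the same family of local systems $\CL_\r$ on $\CY_m^0$.
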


\begin{proof}
For each $0 \le m \le n$, let $\ol\pi'_m$ be the restriction of 
$\pi'$ on $(\pi')\iv(\CX_m)$. 
Since $\ol\pi'_m$ is proper, $(\ol\pi_m')_*(\a')^*\CE[d_m]$ is a 
semisimple complex. Since 
$(\ol\pi'_{n'})_*(\a')^*\CE[d_{n'}]$ coincides with $K_{n',T,\CE}$
(note that $d_n' = d_n$ if $n' > n$), it is enough to show 
that its simple summands (up to shift) are contained 
in the set 
$\bigcup_{0 \le m' \le m}
   \{ \IC(\CX_{m'},\CL_{\r})[d_{m'}] \mid \r \in \CW_{\Bm',\CE}\wg\}$. 
But this is proved by a similar discussion as in [SS, 4.9] (see also 
Appendix, II),
thanks to Proposition 1.10. 
\end{proof}  

Although the explicit decomposition of $K_{n',T,\CE}$ is complicated
in general, $K_{n',T,\CE}$ has a simple description for $n' = 2n$ as 
follows.

\begin{prop}  
\begin{equation*}
K_{2n,T,\CE} \simeq H^{\bullet}(\BP_1^n)\otimes \bigoplus_{\r \in \CW_{\CE}\wg}
         \r \otimes \IC(\CX_n, \CL_{\r})[\dim \CX_n],
\end{equation*}
where $\CW_{\CE}$ is the stabilizer of $\CE$ in $\CW \simeq S_n$, 
and $\CL_{\r}$ is a simple local system on an open dense subset of 
$\CX_n = \CX$. Moreover, we have
\begin{equation*}
\IC(\CX_n,\CL_{\r}) \simeq \IC(G^{\io\th}, \CL'_{\r})\boxtimes (\Ql)_V,
\end{equation*}
where $\CL'_{\r}$ is a simple local system on an open dense subset of $G^{\io\th}$, 
and $(\Ql)_V$ is a constant sheaf $\Ql$ on $V$.  
\end{prop}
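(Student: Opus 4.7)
The crucial point is that for $n' = 2n$ the condition $g\iv v \in M_{2n}$ in the definition of $\wt\CX_{2n}$ is vacuous, since $M_{2n}=V$. Writing $\wt{G^{\io\th}} = \{(x, gB^\th) \in G^{\io\th} \times H/B^\th \mid g\iv xg \in B^{\io\th}\}$ with its usual maps $\pi_0, \a_0$, the projection $(x,v,gB^\th) \mapsto ((x,gB^\th),v)$ identifies $\wt\CX_{2n} \simeq \wt{G^{\io\th}} \times V$; under this identification $\CX_{2n} = \CX = G^{\io\th} \times V$, $\pi^{(2n)} = \pi_0 \times \id_V$, and $\a^{(2n)}$ factors through $\wt{G^{\io\th}}$. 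Proper base change along this product then yields
\begin{equation*}
K_{2n,T,\CE} \simeq \bar K \boxtimes (\Ql)_V[\dim V], \qquad \bar K := (\pi_0)_* \a_0^* \CE[\dim G^{\io\th}],
\end{equation*}
reducing the proposition to an explicit description of the Springer complex $\bar K$ on the symmetric space $G^{\io\th}$ alone.

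The decomposition of $\bar K$ is Grojnowski's classification [Gr] for $GL_{2n}/Sp_{2n}$: factoring $\pi_0 = \e_0 \circ \xi_0$ through an intermediate variety built from $Z_H(T^{\io\th})$ in place of $B^\th$, the map $\xi_0$ is a locally trivial $(SL_2/B_2)^n \simeq \BP_1^n$-fibration (reflecting the $n$ root directions of $B^\th$ absorbed into $Z_H(T^{\io\th})$), while $\e_0$ is a finite Galois cover over the regular locus with group $\CW \simeq S_n$. Combining the Leray decomposition for $\xi_0$ with the monodromy decomposition of the direct image under $\e_0$ yields
\begin{equation*}
\bar K \simeq H^\bullet(\BP_1^n) \otimes \bigoplus_{\r \in \CW_\CE\wg} \r \otimes \IC(G^{\io\th},\CL'_\r),
\end{equation*}
where $\CL'_\r$ is the simple local system on the regular locus of $G^{\io\th}$ associated to the $\CW_\CE$-isotypic component of the direct image.

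Substituting this into the K\"unneth-type formula gives the first displayed formula of the proposition, and forces the identification $\IC(\CX_n,\CL_\r) \simeq \IC(G^{\io\th},\CL'_\r) \boxtimes (\Ql)_V$, which is the second displayed formula; this identification is just the standard compatibility of IC with external product by a constant sheaf on a smooth factor, applied to $\CX_n = G^{\io\th} \times V$. The main obstacle is pinning down the decomposition of $\bar K$: while its overall shape is essentially forced by Theorem 1.11 (summand supports lie in some $\CX_m$ with $m \le n$) combined with the product structure (which restricts these supports to be of the form $Z \times V$, hence $m = n$), extracting the precise multiplicity space $H^\bullet(\BP_1^n) \otimes \r$ requires either an appeal to [Gr] or the explicit $\xi_0, \e_0$ analysis sketched above --- in effect, a pure-group version of the argument of Section 1.3, free of any residual subspace constraint on $v$.
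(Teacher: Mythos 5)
Your overall route is the same as the paper's: identify $\wt\CX_{2n} \simeq \wt G^{\io\th} \times V$ so that $K_{2n,T,\CE} \simeq K_{0,T,\CE} \boxtimes (\Ql)_V[\dim V]$, quote Grojnowski for the decomposition of the symmetric-space complex $K_{0,T,\CE}$, and use Theorem 1.11 together with the product structure of the supports to force every summand to be of the form $\IC(\CX_n,\CL_{\r'})$ up to shift. Up to that point the argument is fine and is essentially the paper's proof.

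The gap is in your last step. You assert that $\IC(\CX_n,\CL_\r) \simeq \IC(G^{\io\th},\CL'_\r)\boxtimes (\Ql)_V$ is ``forced'' and follows from the standard compatibility of $\IC$ with external product by a constant sheaf. That compatibility only says $\IC(G^{\io\th}\times V, \CL'_\r\boxtimes\Ql) \simeq \IC(G^{\io\th},\CL'_\r)\boxtimes(\Ql)_V$; it does not identify this complex with $\IC(\CX_n,\CL_\r)$ for the \emph{same} $\r$, because $\CL_\r$ is not defined as $\CL'_\r\boxtimes\Ql$: it is the $\r$-isotypic local system on $\CY_n^0$ (a proper open dense subset of $G^{\io\th}\reg\times V$, cut out by a genericity condition on $v$) arising from the exotic covering $\psi_n:\wt\CY_n^+\to\CY_n^0$, whereas $\CL'_\r$ arises from the covering $\psi_0:\wt G^{\io\th}\reg\to G^{\io\th}\reg$. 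The abstract matching of the two semisimple decompositions only yields a bijection $\r\mapsto\r'$ of labels (multiplicity spaces $H^{\bullet}(\BP_1^n)\otimes\r$ do not separate representations of equal dimension), so one must actually prove $\CL_\r \simeq f^*\CL'_\r$ for the projection $f:\CY_n^0\to G^{\io\th}\reg$. The paper does this by observing that the commutative square formed by $\psi_n$ and $\psi_0$, both finite Galois coverings with group $S_n$, is cartesian, whence the isotypic components correspond with the same $\r$; this is the step your proposal is missing, and without it the second displayed formula (and hence the precise labelling in the first) is not established.
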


\begin{proof}
We consider the map $\pi^{(2n)}: \wt\CX_{2n} \to \CX_{2n} = \CX$ 
as in 1.1.  We consider a variety
\begin{equation*}
\wt G^{\io\th} = \{ (x, gB^{\th}) \in G^{\io\th} \times H/B^{\th}
      \mid g\iv xg \in B^{\io\th}\}
\end{equation*}
and the diagram 
\begin{equation*}
\begin{CD}
T^{\io\th} @< \a^{(0)}<<  \wt G^{\io\th} @>\pi^{(0)}>>  G^{\io\th},
\end{CD}
\end{equation*}
where $\pi^{(0)} : (x,gB^{\th}) \mapsto x, \a^{(0)}: (x, gB^{\th}) \mapsto p(g\iv xg)$.
Then $\wt\CX_{2n} \simeq \wt G^{\io\th} \times V$, and $\pi^{(2n)}$ can be identified 
with the map $\pi^{(0)} \times \id$. 
Hence $K_{2n, T,\CE} \simeq K_{0,T,\CE} \boxtimes (\Ql)_V[\dim V]$, where 
$K_{0,T,\CE} = \pi^{(0)}_*(\a^{(0)})^*\CE[\dim G^{\io\th}]$. 
By Grojnowski [Gr, Lemma 7.4.4], we know the decomposition of 
the semisimple complex $K_{0,T,\CE}$ (see Theorem 1.16 and (1.15.2) in [SS]).
In particular, we have
\begin{equation*}
\tag{1.12.1}
K_{2n,T,\CE} \simeq H^{\bullet}(\BP_1^n)\otimes \bigoplus_{\r \in \CW_{\CE}\wg}
        \r \otimes \IC(G^{\io\th}, \CL'_{\r})[\dim G^{\io\th}]\boxtimes (\Ql)_V[\dim V].
\end{equation*}
It follows that $K_{2n, T,\CE}$ is a semisimple complex such that
each simple summand (up to shift) has its support $\CX = \CX_n$. 
By Theorem 1.11, any simple summand in $K_{2n, T, \CE}$ 
is of the form $\IC(\CX_m, \CL_{\r})$ up to shift for some $0 \le m \le n$.  
Hence it should be $\IC(\CX_n, \CL_{\r})$.  
Under the notation in [SS, 3.4], we have a commutative diagram 
\begin{equation*}
\begin{CD}
\wt\CY_n^+ @>\psi_n>>  \CY_n^0  \\
@V\wt f VV            @VV f V      \\
\wt G^{\io\th}\reg @>\psi_0>>  G^{\io\th}\reg,   
\end{CD}
\end{equation*}
where $\wt f, f$ are natural projections.  
Since $\psi_n, \psi_0$ are Galois coverings with Galois group $S_n$, 
this square is a cartesian square. 
It follows that 
\begin{equation*}
f^*(\IC(G^{\io\th},\CL'_{\r})|_{G^{\io\th}\reg}) \simeq \IC(\CX_n, \CL_{\r})|_{\CY_n^0}.
\end{equation*}
This shows that $\IC(G^{\io\th},\CL'_{\r}) \boxtimes (\Ql)_V|_{\CY_n^0} \simeq 
                 \IC(\CX_n, \CL_{\r})|_{\CY_n^0}$, 
which implies the second assertion.  
The first assertion follows from this by (1.12.1).  The proposition is proved.
\end{proof}

\para{1.13.}
We fix $s \in T^{\io\th}$.  Let $U$ be the unipotent radical of $B$.  We consider a variety 
\begin{equation*}
\begin{split}
\CZ = \{ (x,v, &gB^{\th}, g'B^{\th}) \in G^{\io\th} \times V \times H/B^{\th} 
              \times H/B^{\th}  \\
         & \mid g\iv xg \in (sU)^{\io\th}, g\iv v \in M_n, 
                {g'}\iv xg' \in (sU)^{\io\th}, {g'}\iv v \in M_n \},  
\end{split}
\end{equation*}
where $(sU)^{\io\th} := sU \cap G^{\io\th}$. 
We consider a partition $H/B^{\th} \times H/B^{\th} = \coprod_{w \in W_n}X_w$ into 
$H$-orbits, and put $\CZ _w = p\iv (X_w)$, where 
$p: \CZ \to H/B^{\th} \times H/B^{\th}$
is the projection onto the last two factors. 
Recall that $\nu_H = \dim U^{\th}$. 
The following result will be used later.

\begin{prop}  
Let $\CO$ be an $H$-orbit in $\CX$, and put $c = \dim \CO$.
\begin{enumerate}
\item
We have $\dim (\CO \cap ((sU)^{\io\th} \times M_n)) \le c/2$.
\item
For $z = (x,v) \in \CO$, we have
\begin{equation*}
\dim \{gB^{\th} \in H/B^{\th} \mid g\iv xg \in (sU)^{\io\th}, 
       g\iv v \in M_n\} \le \nu_H - c/2.
\end{equation*}
\item
$\dim \CZ_w \le 2\nu_H$ for all $w \in W_n$.  Hence 
$\dim \CZ \le 2\nu_H$. 

\end{enumerate}
\end{prop}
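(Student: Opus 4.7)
I prove (i) first; (ii) and (iii) follow by a common dimension count on the natural ``resolution'' of $Y := (sU)^{\io\th} \times M_n$. Set $\wh Y := H \times^{B^{\th}} Y$, with the $B^{\th}$-action on $Y$ given by conjugation on $(sU)^{\io\th}$ and the standard action on $M_n$, equipped with the proper map
\[
\pi : \wh Y \to \CX, \quad (g, (x, v)) \mapsto (gxg\iv, gv).
\]
A direct computation gives $\dim (sU)^{\io\th} = \dim U - \dim U^{\th} = n(n-1)$, hence $\dim Y = n^2 = \nu_H$ and $\dim \wh Y = 2\nu_H$. Unwinding definitions identifies $\CZ$ of 1.13 canonically with $\wh Y \times_{\CX} \wh Y$. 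For each $H$-orbit $\CO$ with $c = \dim \CO$, we have $\pi\iv(\CO) \simeq H \times^{B^{\th}} (Y \cap \CO)$ of dimension $\nu_H + \dim(Y \cap \CO)$, which also equals $c + \dim \pi\iv(z)$ for $z \in \CO$. Therefore
\[
\dim \pi\iv(z) = \nu_H - c + \dim(Y \cap \CO), \quad
\dim\bigl(\pi\iv(\CO) \times_{\CO} \pi\iv(\CO)\bigr) = 2\nu_H + 2\dim(Y \cap \CO) - c.
\]
Granted (i), the first formula yields (ii), and the second yields $\dim \CZ \le 2\nu_H$; since $\CZ_w \subset \CZ$ this gives (iii).

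\emph{Proof of (i).} I would use Braden's Theorem 0.6. Choose a cocharacter $\lambda : \BG_m \to T^{\th}$ that is regular in $T$ and dominant for $(T^{\th}, B^{\th})$, with strictly positive weights on the basis vectors $e_1, \dots, e_n \in M_n$ (possible since $T^{\th}$ contains $T$-regular elements). The induced $\BG_m$-action on $\CX$ --- by $\lambda$-conjugation on $G^{\io\th}$ and linearly on $V$ --- preserves every $H$-orbit (as $\lambda \in T^{\th}\subset H$), has fixed locus $\CX^0 = T^{\io\th} \times \{0\}$, and its attracting component at $(s, 0)$ is precisely $Y$. Let $\dot w_0 \in N_H(T^{\th})$ represent the longest element of $W(H, T^{\th})$; then $\dot w_0 \lambda(t) \dot w_0\iv = \lambda(t)\iv$, so $\dot w_0$ exchanges the attracting and repelling sets $\CX^{\pm}$. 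Since $\dot w_0 \in H$ preserves $\CO$, this forces $\dim(\CO \cap \CX^+) = \dim(\CO \cap \CX^-)$. Applying Braden's theorem to $\IC(\overline\CO)$, combined with the Bialynicki-Birula identity $\dim X^+ + \dim X^- = \dim X + \dim X^0$ on $\overline\CO$ (using normality, or passing to a resolution) and the containment $\CX^+ \cap \CX^- = \CX^0$, one obtains
\[
\dim(\CO \cap Y) = \dim(\CO \cap \CX^+) \le \tfrac{1}{2}\bigl(\dim \CO + \dim(\CO \cap \CX^0)\bigr).
\]
If $\CO \cap \CX^0 = \emptyset$, this is the desired bound. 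Otherwise $\CO = H \cdot (t, 0)$ for some $t \in T^{\io\th}$, in which case $\CO \subset G^{\io\th} \times \{0\}$ and $\dim(\CO \cap Y) = \dim(H\cdot t \cap (sU)^{\io\th}) \le \dim \CO / 2$ by Grojnowski's analogous bound on the symmetric space $G^{\io\th}$ [Gr].

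\emph{Main obstacle.} The delicate point is passing from Braden's sheaf-theoretic isomorphism to the numerical dimension inequality on $\CO$. The argument requires three coordinated ingredients: (a) the symmetry $\lambda \leftrightarrow \lambda\iv$, realized by the Weyl-group element $\dot w_0 \in H$, to swap the two intersections $\CO \cap \CX^{\pm}$; (b) the Bialynicki-Birula complementarity of $\CX^+$ and $\CX^-$ applied to $\overline\CO$; and (c) Grojnowski's bound for the symmetric space $G^{\io\th}$ to treat the degenerate case $\CO \cap \CX^0 \ne \emptyset$. The normality hypothesis in Braden's theorem applied to $\overline\CO$ is the principal technical point; it can be dealt with by resolution, which does not affect the relevant dimensions. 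As an alternative, Springer's Lemma 0.7 applied on a neighborhood of $(s,0) \in \CX^0$ linearizes $\pi$ to a vector-bundle projection and reduces (i) to a direct eigenspace computation on the tangent space $T_{(s,0)}\CX$, where $Y$ appears as the sum of positive-weight eigenspaces for $\lambda$.
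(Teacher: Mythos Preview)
Your reduction of (ii) and (iii) to (i) is clean and correct: once you know $\dim Y=\nu_H$, the identification $\CZ\simeq\wh Y\times_\CX\wh Y$ and the two fibrations of $\pi\iv(\CO)$ over $\CO$ and over $Y\cap\CO$ give exactly the formulas you wrote, and (i) then implies (ii) and $\dim\CZ\le 2\nu_H$, hence (iii).

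The gap is in your proof of (i). Neither Braden's theorem nor the Bialynicki--Birula decomposition yields the inequality
\[
\dim(\CO\cap\CX^+)\ \le\ \tfrac12\bigl(\dim\CO+\dim(\CO\cap\CX^0)\bigr)
\]
in the generality you need. Braden's theorem is a sheaf-theoretic isomorphism $(i^+)^!(p^+)^*K\simeq(i^-)^*(p^-)^!K$; extracting a bound on $\dim(\CO\cap\CX^+)$ from it would require control of the cohomological amplitude of both sides on $\CX^0$, which you have not supplied. The Bialynicki--Birula identity $\dim X^++\dim X^-=\dim X+\dim X^0$ is a tangent-space statement at a point of $X^0$ in a \emph{smooth} variety; it says nothing when $\CO\cap\CX^0=\emptyset$, which is the generic case here (any orbit not of the form $H\cdot(t,0)$). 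Indeed the inequality you assert fails already for the smooth $\BG_m$-variety $\BA^2\setminus\{0\}$ with weights $(1,-1)$: there $\dim X^\pm=1$, $\dim X=2$, $X^0=\emptyset$. Your $w_0$-symmetry correctly gives $\dim(\CO\cap\CX^+)=\dim(\CO\cap\CX^-)$, but without a transversality statement for $\CO\cap\CX^+$ and $\CO\cap\CX^-$ inside $\CO$ (which you have not proved), equality of the two dimensions does not bound either one by $c/2$. The suggested alternative via Lemma~0.7 does not help: that lemma compares $i^!$ and $q_!$ on a vector bundle with positive weights; it does not produce a dimension estimate for $\CO\cap Y$.

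The paper proceeds in the opposite order and avoids this difficulty entirely. It proves (iii) first by an elementary count: for each $w\in W_n$ the fibre of $\CZ_w$ over $(B^\th,\dw B^\th)\in X_w$ lies in $(U\cap{}^wU)^{\io\th}\times(M_n\cap w(M_n))$, and one checks the combinatorial identity
\[
\dim(U\cap{}^wU)^{\io\th}+\dim(M_n\cap w(M_n))=\dim(U\cap{}^wU)^{\th},
\]
by comparing $b_w=\sharp\{2\ve_i\in\vD^+\mid w\iv(2\ve_i)>0\}$ with $\dim(M_n\cap w(M_n))$. This immediately gives $\dim\CZ_w\le 2\nu_H-\dim X_w+\dim X_w=2\nu_H$. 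Then (ii) follows from (iii) by projecting $q\iv(\CO)\subset\CZ$ onto $\CO$, and (i) from (ii) via the variety $\CX_\CO$ exactly as in your own reduction (run backwards). The content is thus the root-system identity above, not a hyperbolic-localization argument.
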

\begin{proof}
First we show (iii). 
The projection $p$ is $H$-equivariant. A representative of the $H$-orbit $X_w$ 
is given by $(B^{\th}, wB^{\th})$.  Hence in order to show (iii), it is enough to see that 

\begin{equation*}
\tag{1.14.1}
\begin{split}
\dim \{ (x,v ) \in (sU)^{\io\th} \times &M_n \mid (\dw\iv x\dw, \dw\iv v) \in 
               (sU)^{\io\th} \times M_n \} \\ 
               &\le 2\nu_H -  \dim X_w 
\end{split}
\end{equation*}
for each $w \in W_n$, where $\dw$ is a representative of $w$ in $H$.  
Then $x \in B^{\io\th} \cap {}^wB^{\io\th}$ can be written as 
$x = su = s'u'$ with $u \in U$, $s' = wsw\iv \in T$, $u' \in {}^{w}U$.
Hence $s' = s$ and $u = u' \in U^{\io\th} \cap {}^wU^{\io\th}$. 
Moreover $v \in M_n \cap w(M_n)$.  
It follows that the left hand side of (1.14.1) is less than or equal to 
$\dim  (U^{\io\th} \cap {}^wU^{\io\th}) + \dim (M_n  \cap w(M_n))$.
One can check that 
$\dim (M_n \cap w(M_n)) = \sharp \{i \in [1,n] \mid w\iv(i) > 0 \}$. 
On the other hand, if we denote the set of positive roots of type $C_n$
by $\vD^+ = \{ \ve_i \pm \ve_j (1 \le i< j \le n), 2\ve_i (1 \le i \le n)\}$, 
we see that $\dim (U \cap {}^wU)^{\th} = \dim (U \cap {}^wU)^{\io\th} + b_w$,
where 
\begin{equation*}
b_w = \sharp\{ 2\ve_i \in \vD^+ \mid w\iv(2\ve_i) > 0\}.
\end{equation*}
It is easy to see that $b_w = \dim (M_n \cap w(M_n))$.  Hence we have  
$\dim (U \cap {}^wU)^{\io\th} + \dim (M_n \cap w(M_n)) = \dim (U \cap {}^wU)^{\th}$.
Since $\dim X_w = 2\nu_H - \dim (U \cap {}^wU)^{\th}$, we obtain (1.14.1).  
Thus (iii) follows.  
\par
Next we show (ii). Let $q: \CZ \to G^{\io\th} \times V$ be the projection to the first
two factors. For each $H$-orbit $\CO$, we consider $q\iv (\CO)$. 
We may assume that $q\iv(\CO)$ is non-empty, since otherwise the variety in (ii)
is empty.  By (iii), we have $\dim q\iv(\CO) \le 2\nu_H$. On the other hand, 
let $Y_z$ be the variety given in (ii).  Then for each 
$z \in \CO$, $q\iv(z) \simeq Y_z \times Y_z$. Hence 
$\dim Y_z = (\dim q\iv(\CO) - \dim \CO)/2 \le \nu_H  - c/2$, 
and (ii) follows.  
\par
Finally we show (i).  Consider the variety
\begin{equation*}
\tag{1.14.2}
\CX_{\CO} = \{ (x,v), gB^{\th}) \in \CO \times H/B^{\th} 
                   \mid g\iv xg \in (sU)^{\io\th}, g\iv v \in M_n \}, 
\end{equation*}
and let $\a : \CX_{\CO} \to \CO$ be the projection onto $\CO$-factor, and 
$\b: \CX_{\CO} \to H/B^{\th}$ the projection onto $H/B^{\th}$. 
Then for each $z \in \CO$, the fibre
$\a\iv(z)$ is isomorphic to the variety $Y_z$ given in (ii).  
Hence $\dim \CX_{\CO} = \dim Y_z + \dim \CO   \le \nu_H + c/2$. 
On the other hand, each fibre of $\b$ is isomorphic to the variety 
$\CO \cap ((sU)^{\io\th} \times M_n)$. 
Hence 
\begin{equation*}
\dim (\CO \cap ((sU)^{\io\th} \times M_n)) = \dim \CX_{\CO} - \dim H/B^{\th} 
     \le c/2, 
\end{equation*}
and (i) holds.  The proposition is proved. 
\end{proof}

\begin{cor}  
Let $\CO$ be an $H$-orbit in $\CX$ containing $z_0 = (x_0,v_0)$. Let 
$x_0 = su$ be the Jordan decomposition. 
Assume that $s \in T^{\io\th}, u \in U^{\io\th}$ and that 
$v_0 \in M_n$.  Then we have
\begin{equation*}
\dim (\CO \cap ((sU)^{\io\th} \times M_n)) = \frac{1}{2}\dim \CO.
\end{equation*}
\end{cor}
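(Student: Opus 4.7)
My plan starts from Proposition 1.14, whose part (i) already gives the upper bound $\dim(\CO \cap ((sU)^{\io\th}\times M_n)) \le c/2$. I therefore focus on the matching lower bound.

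The hypothesis guarantees that $(z_0, B^{\th}) \in \CX_\CO$ (with $\CX_\CO$ as in (1.14.2)), so $\CX_\CO$ is nonempty. I would then exploit the two $H$-equivariant projections $\alpha: \CX_\CO \to \CO$ and $\beta: \CX_\CO \to H/B^{\th}$: each is a map to a homogeneous $H$-space with nonempty $H$-stable image, hence surjective, and its fibres are equidimensional by $H$-transitivity. The fibre of $\alpha$ at $z_0$ is $Y_{z_0}$, and the generic fibre of $\beta$ is $E := \CO \cap ((sU)^{\io\th}\times M_n)$. Comparing dimensions,
\[
c + \dim Y_{z_0} \;=\; \dim \CX_\CO \;=\; \nu_H + \dim E,
\]
so the corollary is equivalent to promoting the inequality $\dim Y_{z_0} \le \nu_H - c/2$ of Proposition 1.14(ii) to an equality for our $z_0$.

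For the matching lower bound $\dim Y_{z_0} \ge \nu_H - c/2$, I plan to reduce to a generalized Springer-fibre computation inside the reductive centralizer $Z_H(s)$. Comparing semisimple parts, the condition $g\iv x_0 g \in (sU)^{\io\th}$ forces $g \in Z_H(s)\cdot B^{\th}$, and since $s \in T^{\io\th}$ the subgroup $B^{\th}\cap Z_H(s)$ is a Borel of the reductive group $Z_H(s)$ (containing $T^{\th}$). Thus $Y_{z_0}$ embeds into the flag variety $Z_H(s)/(B^{\th}\cap Z_H(s))$, cut out by the remaining unipotent condition $g\iv u g \in U^{\io\th}\cap Z_H(s)$ and the vector condition $g\iv v_0 \in M_n$. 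Using the eigenspace decomposition of $V$ under $s$, this is exactly the fibre attached to a smaller exotic symmetric space for $Z_H(s)$ (which itself is a product of symplectic and general-linear factors), and a reapplication of Proposition 1.14(ii) to this smaller instance produces the desired lower bound.

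The main obstacle is the inductive bookkeeping. I need to verify that the hypothesis of Corollary 1.15 descends to the representative $(u, v_0)$ in the smaller exotic symmetric space for $Z_H(s)$ (so that the smaller instance of Proposition 1.14(ii) applies cleanly), and that the elementary dimension identities relating $\nu_{Z_H(s)}$ and the $Z_H(s)$-orbit dimension to $\nu_H$ and $c$ match up via the Bruhat-type decomposition of $H/Z_H(s)B^{\th}$. Once this matching is carried out, combining the smaller-instance equality with the identity $\dim E = \dim Y_{z_0} + c - \nu_H$ established above yields $\dim E = c/2$.
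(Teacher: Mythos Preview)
Your overall shape is the same as the paper's: reduce the corollary to the equality $\dim Y_{z_0}=\nu_H-c/2$ via the incidence variety $\CX_\CO$, and obtain the missing lower bound by passing to the centralizer $L=Z_G(s)$ and looking at a Springer-type fibre there. That part is fine.

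The genuine gap is in the sentence ``a reapplication of Proposition 1.14(ii) to this smaller instance produces the desired lower bound.'' Proposition 1.14(ii) is an \emph{inequality in the wrong direction}: it only gives $\dim Y\le \nu-\tfrac{c}{2}$, so reapplying it to the Levi produces another upper bound, not the lower bound you need. In the paper the lower bound comes from an \emph{exact} dimension formula for the unipotent exotic Springer fibre, namely $\dim(\pi_1^L)^{-1}(u,v_0)=\nu_{L^{\th}}-\tfrac{1}{2}\dim\CO_0$, which is cited from the proof of Theorem 5.4 in [SS]. One then checks the elementary identity $\nu_{L^{\th}}-\tfrac{1}{2}\dim\CO_0=\nu_H-\tfrac{1}{2}\dim\CO$ (using $Z_{L^{\th}}(u,v_0)=Z_H(z_0)$ and $\rk L^{\th}=\rk H$). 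This unipotent input is the missing ingredient in your plan; without it the induction has no base case and the argument does not close.

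A secondary point: you assert that $g^{-1}x_0g\in (sU)^{\io\th}$ forces $g\in Z_H(s)\cdot B^{\th}$, so that $Y_{z_0}$ \emph{equals} the Levi fibre. The paper avoids this by only using the easy inclusion $(\pi_1^L)^{-1}(u,v_0)\subset Y_{z_0}$, which already suffices for the lower bound once the exact unipotent dimension is known. Your stronger claim is plausible but would need its own justification; fortunately it is not required.
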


\begin{proof}
Let $\CX_{\CO}$ be the variety defined in (1.14.2).  We follow
the notation in the proof of Proposition 1.14. 
In particular, $c = \dim \CO$.  
By Proposition 1.14 (ii),
we have $\dim Y_z \le \nu_H - c/2$ for any $z = (x,v) \in \CO$. 
We want to show that 
\begin{equation*}
\tag{1.15.1} 
\dim Y_z = \nu_H - c/2.
\end{equation*}
Let $L = Z_G(s)$, and consider $\CX_L = L^{\io\th} \times V$. 
Let $\pi^L_1: (\wt\CX_L)_{\unip} \to (\CX_L)_{\unip}$ be the map 
$\pi_1$ defined in [SS, 2.4] with respect to $\CX_L$. 
We have $\CX_L \simeq \prod_i G_i^{\io\th} \times V_i$, where 
$G_i = GL_{2n_i}$ and $\dim V_i = 2n_i$. 
Here $(u,v_0) \in (\CX_L)_{\unip}$, and we consider 
$(\pi_1^L)\iv (u, v_0)$.  Let $\CO_0$ be the $L^{\th}$-orbit of 
$(u, v_0)$ in $(\CX_L)\uni$. 
In the proof of Theorem 5.4 in [SS], it was shown that 
$\dim \pi_1\iv(z) = \nu_H - \dim \CO'/2$ for any $H$-orbit $\CO'$ 
in $\CX\uni$ containing $z$.  
Applying this to our setting, we have
\begin{align*}
\dim (\pi_1^L)\iv(u,v_0) &= \nu_{L^{\th}} - \dim \CO_0/2  \\
                         &= (\dim Z_{L^{\th}}(u,v_0) - \rk L^{\th})/2 \\
                         &= (\dim Z_H(z_0) - \rk H)/2 \\
                         &= \nu_H - \dim \CO/2.
\end{align*}
Here $Y_z$ contains a variety isomorphic to $(\pi_1^L)\iv(u,v_0)$ for 
$z = z_0$.  Since $\dim Y_z$ is constant for any $z \in \CO$, this implies   
that $\dim Y_z \ge \nu_H - c/2$.  Hence (1.15.1) holds.  
Now the corollary follows by a similar argument as in the proof of 
Proposition 1.14 (i).  
\end{proof}
\par\bigskip

\par\bigskip
\section{Intersection cohomology on $GL(V) \times V$}
\para{2.1.} 
In this section, we assume that $V$ is an $n$-dimensional 
vector space over $\Bk$, and $G = GL(V)$.  
We consider the variety $\CX = G \times V$ on which $G$ acts 
diagonally. Put $\CX\uni = G\uni \times V$.  Then $\CX\uni$ 
is a closed $G$-stable subset of $\CX$ isomorphic to the enhanced 
nilpotent cone $\Fg\nil \times V$ studied extensively by 
Achar and Henderson [AH] and Travkin [T]. The following fact was
proved independently by [AH] and [T]. 
\begin{prop}[{[AH, Proposition 2.3], [T, Theorem 1]}]  
The set of $G$-orbits of $\Fg\nil \times V$ is in bijection with 
$\CP_{n,2}$, the set of double partitions of $n$.  
\end{prop}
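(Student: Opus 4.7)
The plan is to construct a bijection between $G$-orbits on $\Fg\nil \times V$ and $\CP_{n,2}$ by means of a refined Jordan normal form adapted to the pair $(x,v)$, repackaging the resulting combinatorial data as a bipartition. The key invariant of an orbit will be a multiset of pairs $(d_i,a_i)$ with $\sum_i d_i = n$ and $0 \le a_i \le d_i$, where $d_i$ records a Jordan block size and $a_i$ records the position of $v$ inside that block (with $a_i = d_i$ encoding that $v$ has no component in the block).

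The first and main step is to establish the following normal form lemma: for any $(x,v)$, viewing $V$ as a torsion module over the PID $\Bk[T]$ via $T\mapsto x$, one can choose a decomposition $V = \bigoplus_i V_i$ into cyclic $\Bk[x]$-submodules of dimensions $d_i$, together with cyclic generators $u_i \in V_i$, such that
\begin{equation*}
v = \sum_i x^{a_i} u_i, \qquad a_i \in \{0,1,\dots,d_i\}.
\end{equation*}
This is proved by induction on $n$. If $v \not\in \operatorname{Im} x$, then $\Bk[x]v$ is itself a cyclic $\Bk[x]$-direct summand of $V$ with $v$ as its top generator (placing $v$ at position $a = 0$); one applies induction to a complementary summand. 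If $v \in \operatorname{Im} x$, choose $k$ maximal with $v \in \operatorname{Im}(x^k)$ and lift $v$ to $w$ with $x^k w = v$; the claim is that $w$ may be chosen so that $\Bk[x]w$ splits off as a cyclic $\Bk[x]$-summand of $V$, and then $v$ sits at position $a = k$ inside this block.

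Given the normal form, the multiset $\{(d_i,a_i)\}$ is readable from the orbit, hence defines a map $(x,v) \mapsto (\mu,\nu) \in \CP_{n,2}$ via the explicit combinatorial recipe of [AH, Proposition 2.3] or [T, Theorem 1]. One then checks: (i) the map is $G$-invariant, since any two normal-form presentations of the same $(x,v)$ differ by an element of $G$ and thus yield the same multiset; (ii) it is surjective onto $\CP_{n,2}$, by writing down an explicit representative $(x_{\mu,\nu}, v_{\mu,\nu})$ for each bipartition; and (iii) it is injective on orbits, by conjugating two pairs with identical invariants into a common normal form using the $G$-action, which acts transitively on the set of normal-form bases once the underlying multiset of decomposition types is fixed.

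The hard part will be the normal form step, specifically the lifting when $v \in \operatorname{Im} x$: the preimage $w$ of $v$ under $x^k$ is determined only modulo $\ker x^k$, and one must show that a suitable choice of $w$ produces a cyclic $\Bk[x]$-summand $\Bk[x]w$ of $V$. This reduces to a direct analysis of the Jordan block structure, tracking the maximal framing depth $k$ block-by-block. Once this is in hand, the remaining steps are essentially bookkeeping, though the translation of $\{(d_i,a_i)\}$ into a bipartition requires the precise combinatorial convention of [AH]/[T] to ensure the map lands in $\CP_{n,2}$ and is a bijection on orbits.
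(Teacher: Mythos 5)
The paper does not actually prove this proposition: it is quoted from [AH, Proposition 2.3] and [T, Theorem 1], and only the explicit description of the bijection (via $E^xv$ and the Jordan types of $x$ on $E^xv$ and $V/E^xv$, recalled in 2.3) is used later. Measured against those cited proofs, your proposal has two genuine gaps. First, the easy branch of your normal-form induction is false: $v \notin \Im x$ does not imply that $\Bk[x]v$ is a $\Bk[x]$-direct summand of $V$. Take $x$ of Jordan type $(3,1)$ with block generators $u_1,u_2$ and $v = xu_1 + u_2$; then $v \notin \Im x$ and $\Bk[x]v$ is cyclic of dimension $2$, but a complementary summand would force $x$ to have Jordan type $(2,2)$ or $(2,1,1)$, not $(3,1)$. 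So the induction cannot start by splitting off $\Bk[x]v$, and the case you yourself flag as "the hard part" is not the only problematic one.

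Second, and more seriously, even granting the existence of a presentation $v = \sum_i x^{a_i}u_i$, the multiset $\{(d_i,a_i)\}$ is not an invariant of the pair $(x,v)$, so your step (i) is unfounded (and circular as stated: two presentations of the same pair need not yield the same multiset). Example: $x = 0$ on a $2$-dimensional $V$ and $v \neq 0$; choosing $u_1 = v$ gives the multiset $\{(1,0),(1,1)\}$, while choosing $u_1,u_2$ with $u_1 + u_2 = v$ gives $\{(1,0),(1,0)\}$, and any recipe of the form $\lambda_i = d_i - a_i$, $\mu_i = a_i$ then produces different bipartitions for the same pair. The actual content of [AH, Proposition 2.3] and [T, Theorem 1] is precisely the normalization you defer to "bookkeeping": one must show that every pair admits a presentation in which, after ordering the blocks, the two rows $(d_i - a_i)_i$ and $(a_i)_i$ are simultaneously partitions, and that this normalized bipartition is uniquely determined by the orbit — for instance by identifying it intrinsically, as in 2.3, with the Jordan types of $x|_{E^xv}$ and $x|_{V/E^xv}$. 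Without that, neither the well-definedness of your map to $\CP_{n,2}$ nor its injectivity is established; the transitivity statement in your step (iii) only gives the easy direction, that equal multisets imply equal orbits.
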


\para{2.3.}
The explicit correspondence is given as follows; for 
$x \in \Fg\nil$, put $E^x = \{ g \in \End(V) \mid gx = xg\}$.
Then $E^x$ is an $x$-stable subspace of $\End(V)$.  
For $(x,v) \in \Fg\nil \times V$, $E^xv$ is an $x$-stable subspace of 
$V$.  Assume that the Jordan type   
of $x$ is $\nu$, a partition of $n$.  Let $\la^{(1)}$ be 
the Jordan type of $x|_{E^xv}$ and $\la^{(2)}$ the Jordan type of 
$x|_{V/E^xv}$.  Then $\nu = \la^{(1)} + \la^{(2)}$, and we have
$\Bla = (\la^{(1)}, \la^{(2)}) \in \CP_{n,2}$. 
The correspondence $(x,v) \mapsto \Bla$ gives the above bijection.
We denote by $\CO_{\Bla}$ the $G$-orbit in $\Fg\nil \times V$ 
(or the $G$-orbit in $\CX\uni$) corresponding to $\Bla \in \CP_{n,2}$.
Note that the orbit containing $(x,0)$ corresponds to 
$\Bla = (\emptyset, \la^{(2)})$.  In that case, $\CO_{\Bla}$ coincides with 
$\CO_{\nu}$, the $G$-orbit containing $x$ in $\Fg\nil$, with 
$\nu = \la^{(2)}$.    
\par
\begin{prop}[{[AH, Theorem 3.9]}] 
For $\Bla, \Bmu \in \CP_{n,2}$, $\CO_{\Bmu} \subseteq \ol\CO_{\Bla}$ 
if and only if $\Bmu \le \Bla$, where $\Bmu \le \Bla$ is the partial
order on $\CP_{n,2}$ given in {\rm [SS, 1.7]}. 
\end{prop}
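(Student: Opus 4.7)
The plan is to handle the two directions separately. For the ``only if'' direction, I would exploit semi-continuity of rank functions on $\CX\uni$. For each $k \ge 0$, the functions
\begin{equation*}
(x,v) \mapsto \dim x^k V, \qquad (x,v) \mapsto \dim x^k(E^x v)
\end{equation*}
are lower semi-continuous on $\Fg\nil \times V$. On the orbit $\CO_{\Bla}$ these dimensions depend only on $\Bla$: the first is controlled by the columns of $\la^{(1)} + \la^{(2)}$ (the Jordan type of $x$, by Proposition 2.2 together with the description in 2.3), while the second is controlled by the columns of $\la^{(1)}$ (the Jordan type of $x|_{E^x v}$). The partial order $\Bmu \le \Bla$ of [SS, 1.7] can be characterized precisely in terms of inequalities among these partial column sums, so semi-continuity immediately forces $\Bmu \le \Bla$ whenever a point $(y,w) \in \CO_{\Bmu}$ lies in $\ol\CO_{\Bla}$.

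For the ``if'' direction I would argue by induction on the length of a chain in the poset, reducing to covering relations in $(\CP_{n,2}, \le)$. The covers split into two basic types: those internal to one of the partitions $\la^{(i)}$, which are Gerstenhaber-type moves of a single box within a fixed Jordan-type, and those transferring a box from $\la^{(1)}$ to $\la^{(2)}$, which correspond to $v$ becoming ``more special'' with respect to the Jordan decomposition of $x$. For each cover type I would construct an explicit one-parameter family $(x_t, v_t) \in \Fg\nil \times V$ in a basis-adapted model, verify via the algorithm of 2.3 that the generic fibre lies in $\CO_{\Bla}$ and that the specialization at $t = 0$ lies in $\CO_{\Bmu}$, and then concatenate such families along any chain from $\Bmu$ to $\Bla$ to conclude $\CO_{\Bmu} \subseteq \ol\CO_{\Bla}$.

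The main obstacle is the cross-type covers, where $\dim E^{x_t} v_t$ must drop at the special fibre while the Jordan type of $x_t$ is controlled. The delicate point is to arrange that $v_t$ loses a cyclic component at $t = 0$ without disturbing the Jordan structure of $x_t$ beyond what the target cover prescribes. My approach would be to isolate the relevant degeneration inside a small invariant block (essentially a $2$- or $3$-dimensional model where the combinatorics is transparent), perform the one-parameter degeneration there by an explicit matrix and vector, and then glue back via a direct sum decomposition of $V$ into $x$-stable pieces. Once each cover is realized in this way, the two semi-continuity inequalities from the first direction are matched on the nose by the explicit families, yielding the equivalence.
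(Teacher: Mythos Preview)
The paper does not prove this statement; it is quoted verbatim from [AH, Theorem~3.9] and used as input. So your proposal should be compared with Achar--Henderson's argument rather than with anything in the present paper.

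Your ``only if'' direction has a genuine gap: the function $(x,v)\mapsto\dim x^k(E^xv)$ is \emph{not} lower semi-continuous. Take $n=2$, $v=e_1$, and $x_t=\bigl(\begin{smallmatrix}0&t\\0&0\end{smallmatrix}\bigr)$. For $t\neq0$ one has $E^{x_t}=\Bk[x_t]$ and $E^{x_t}e_1=\langle e_1\rangle$, so $\dim E^{x_t}v=1$; at $t=0$ one has $E^{0}=\End(V)$ and $E^{0}e_1=V$, so the dimension jumps to $2$. This is a genuine degeneration from $\CO_{((1),(1))}$ into its boundary, landing in $\CO_{((1,1),\emptyset)}$. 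The underlying reason is that $\dim E^x$ is only \emph{upper} semi-continuous in $x$, and composing with evaluation at $v$ yields neither upper nor lower semi-continuity for $\dim E^xv$. The same example also shows that your implicit claim --- that $\Bmu\le\Bla$ is characterized by dominance on $\la^{(1)}+\la^{(2)}$ together with dominance on $\la^{(1)}$ alone --- is false: here $\mu^{(1)}=(1,1)$ has strictly larger size than $\la^{(1)}=(1)$, yet $\Bmu\le\Bla$ holds. The order on $\CP_{n,2}$ genuinely interleaves the two partitions.

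What [AH] actually uses are rank functions that arise as dimensions of images of linear maps depending polynomially on $(x,v)$, so that lower semi-continuity is automatic; the relevant subspaces mix powers of $x$ with the cyclic flag $\langle v,xv,\dots\rangle$ rather than with $E^xv$, and the resulting system of inequalities matches the interleaved conditions defining $\le$ on $\CP_{n,2}$. Your ``if'' direction via covering relations and explicit one-parameter families is the right strategy and is essentially what [AH] does, but be aware that the covering relations in this poset come in more than the two flavours you list, and the cross-type degenerations are exactly where most of the work lies.
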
 
\par
Recall that $a(\Bla) = 2\cdot n(\Bla) + |\la^{(2)}|$ (cf. [SS2, 5.1]).

\begin{prop}[{[AH, Proposition 2.8]}]  
Let $\Bla = (\la^{(1)}, \la^{(2)}) \in \CP_{n,2}$, and 
put $\nu = \la^{(1)} + \la^{(2)}$.   Let $(x,v) \in \CO_{\Bla}$. 
Then we have 
\begin{enumerate}
\item 
$Z_G(x,v)$ is a connected algebraic group of dimension $a(\Bla)$.
\item
$\dim \CO_{\Bla} = \dim \CO_{\nu} + |\la^{(1)}| = n^2 - a(\Bla)$, 
where $\CO_{\nu}$ is the $G$-orbit in $\Fg\nil$ containing $x$.
\end{enumerate}
\end{prop}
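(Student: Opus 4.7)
The plan is to deduce both assertions from the orbit--stabilizer principle applied to the projection $\CO_{\Bla} \to \CO_{\nu}$, $(x,v) \mapsto x$, combined with a computation of $\dim Z_G(x)\cdot v$ that controls the size of the fibres, together with an explicit description at a convenient normal-form representative of $\CO_{\Bla}$.

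First, I would invoke the standard fact that $Z_G(x)$ is connected of dimension $n + 2 n(\nu)$, so $\dim \CO_{\nu} = n^2 - n - 2 n(\nu)$. The fibre of the above projection over $x$ is the $Z_G(x)$-orbit of $v$ in $V$, and the main point is to establish
\begin{equation*}
\dim Z_G(x) \cdot v \;=\; \dim E^x v \;=\; |\la^{(1)}|,
\end{equation*}
the last equality being the very definition of $\la^{(1)}$ as the Jordan type of $x|_{E^x v}$. For the upper bound, differentiating the orbit map $g \mapsto gv$ at the identity produces the linear map $Z_{\Fg}(x) \to V$, $X \mapsto Xv$, whose image lies in $E^x v$. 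For the lower bound, I would choose a basis of $V$ adapted to the filtration $0 \subset E^x v \subset V$, in which $x$ is in Jordan form dictated by $\Bla$ and $v$ is a sum of cyclic generators of the $\la^{(1)}$-blocks, and then display enough one-parameter subgroups of $Z_G(x)$ whose derivatives at $v$ span $E^x v$.

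Combining these two bounds yields $\dim \CO_{\Bla} = \dim \CO_{\nu} + |\la^{(1)}|$, and hence
\begin{equation*}
\dim Z_G(x,v) \;=\; n + 2 n(\nu) - |\la^{(1)}| \;=\; 2 n(\nu) + |\la^{(2)}|.
\end{equation*}
The identification of this quantity with $a(\Bla) = 2 n(\Bla) + |\la^{(2)}|$ is then the combinatorial equality $n(\nu) = n(\Bla)$ built into the definition of $a(\Bla)$ recalled from [SS2, 5.1]. This establishes (ii) together with the dimension assertion in (i).

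For the connectedness in (i), I would compute $Z_G(x,v)$ directly at the normal-form representative fixed above: with respect to the adapted basis, $Z_G(x,v)$ identifies with an iterated extension of a unipotent group by the simultaneous stabilizer of $v$ inside a product of general linear groups attached to the Jordan blocks refined by the decomposition of $V$ by $E^x v$, and this description makes it manifestly connected. The main technical obstacle lies in the explicit bookkeeping for both the lower bound on $\dim Z_G(x) \cdot v$ and the description of $Z_G(x,v)$, namely the construction of a convenient normal form together with enough one-parameter subgroups of $Z_G(x)$ acting transversally to the stabilizer; these are precisely the computations performed in [AH, Proposition 2.8] and [T, Theorem 1], whose details one can quote.
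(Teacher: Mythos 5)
The paper gives no proof of this statement at all: it is quoted directly from [AH, Proposition 2.8], so there is no internal argument to compare against, and your outline is in substance a reconstruction of Achar–Henderson's proof. Your reduction via the projection $\CO_{\Bla} \to \CO_{\nu}$ and the identity $\dim Z_G(x)\cdot v = \dim E^xv = |\la^{(1)}|$ is correct, but two points can be streamlined, and one deserves care. First, the upper bound should not be obtained by differentiating the orbit map: in positive characteristic the rank of the differential only bounds the orbit dimension from below (separability issue), whereas the set-theoretic inclusion $Z_G(x)\cdot v \subseteq E^xv$ (because $Z_G(x)\subseteq E^x$) gives the upper bound for free; and since $Z_G(x)$ is the invertible locus of the algebra $E^x$, hence Zariski-open dense in it, the image of $g\mapsto gv$ restricted to $Z_G(x)$ is dense in $E^xv$, which gives the lower bound without constructing one-parameter subgroups or normal forms. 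Second, both the connectedness and the dimension in (i) come out at once by observing that $Z_G(x,v)$ is the open locus of invertible elements in the affine subspace $\{g\in E^x \mid gv=v\}$ of $E^x$; an open subset of an affine space is irreducible, hence connected, and its dimension is $\dim E^x - \dim E^xv = n + 2n(\nu) - |\la^{(1)}| = 2n(\nu)+|\la^{(2)}|$, which is the route taken in [AH] and avoids the iterated-extension bookkeeping you defer to the reference. Your final identification with $a(\Bla)$ via $n(\Bla)=n(\nu)$ is the correct combinatorial input from [SS2, 5.1].
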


\para{2.6.}
Let $B = TU$ be a Borel subgroup of $G$, with a maximal torus $T$ and 
the unipotent radical $U$.
Let $M_0 = \{0\} \subset M_1 \subset M_2 \subset \cdots 
                \subset M_{n-1} \subset M_n = V$  
be the total flag stabilized by $B$. 
We fix a basis $\{ e_1, \dots e_n\}$ of $V$ such that 
$M_m = \lp e_1, \dots, e_m\rp$ for any $m$, 
and that $e_i$ are weight vectors for 
$T$.  Let $W = N_G(T)/T$ be the Weyl group of $G$.  Then $W \simeq S_n$
is  identified with the permutation group of the basis 
$\{ e_1, \dots, e_n\}$.   
We define a subset $M_{[1,m]}$ of $M_m$ as before, i.e., 
\begin{equation*}
M_{[1,m]} = \{ v = \sum_{j = 1}^ma_je_j \in M_m \mid a_j \ne 0
  \text{ for any } j\}.   
\end{equation*}
For any $0 \le m \le n$, we define varieties
\begin{align*}
\wt\CX_m &= \{ (x,v, gB) \in G \times V \times G/B 
                 \mid g\iv xg \in B, g\iv v \in M_m\}, \\
\CX_m &= \bigcup_{g \in G}g(B \times M_m), \\
\wt\CY_m &= \{ (x,v,gT) \in G \times V \times G/T 
                  \mid g\iv xg \in T\reg, g\iv v \in M_{[1,m]}\}, \\
\CY_m &= \bigcup_{g \in G}g(T\reg \times M_{[1,m]}), 
\end{align*}
where $T\reg$ is the set of regular semisimple elements in $T$.
In the rest of this section, we fix $m$, and 
define maps $\pi : \wt\CX_m \to \CX_m$ by  
$\pi(x,v,gB) = (x,v)$, $\p: \wt\CY_m \to \CY_m$
by $\p(x,v, gT) = (x,v)$.
Since $\wt\CY_m \simeq G \times{}^T(T\reg \times M_{[1,m]})$,  
$\wt\CY_m$ is smooth and irreducible.
Let $W_{\Bm}$ be the stabilizer of $\{ e_1, \dots, e_m\}$ in $W$.
Hence $W_{\Bm} \simeq S_m \times S_{n-m}$.  Then the map 
$\p : \wt\CY_m \to \CY_m$  is a finite
Galois covering with group $W_{\Bm}$, thus $\CY_m$ is also 
smooth irreducible. We have 
\begin{equation*}
\tag{2.6.1}
\dim \wt\CY_m = \dim \CY_m = \dim G + m.
\end{equation*}    
Note that $\wt\CX_m \simeq G \times^B(B \times M_m)$ is smooth and 
irreducible, and the map $\pi : \wt\CX_m \to \CX_m$ is proper, surjective.
Hence $\CX_m$ is a $G$-stable irreducible closed subvariety of $\CX$.  Since 
$\CX_m \supset \CY_m$, and $\dim \CY_m = \dim \wt\CX_m$, we see that 
\par\medskip\noindent
(2.6.2) \ The closure $\ol \CY_m$ of $\CY_m$ coincides with $\CX_m$.
\par\medskip
Let $\CX_{m,\unip} = \CX_m \cap \CX\uni$, and  put 
$\wt\CX_{m,\unip} = \pi\iv(\CX_{m,\unip})$. 
Let $\pi_1: \wt\CX_{m,\unip} \to \CX_{m,\unip}$ be the restriction of 
$\pi$.
Since $\wt\CX_{m,\unip} \simeq G \times^B(U \times M_m)$, 
$\wt\CX_{m,\unip}$ is smooth, irreducible with 
$\dim \wt\CX_{m,\unip} = \dim G\uni + m$.  Moreover, 
$\pi_1$ is surjective. We note that
\begin{equation*}
\tag{2.6.3}
\CX_{m,\unip} = \ol\CO_{\Bla} \quad\text{ for } \Bla = ((m), (n-m)).
\end{equation*}
\par\medskip\noindent
In fact, by the explicit correspondence given in 2.3, we see that 
$\CO_{\Bla} \subset \CX_{m,\unip}$.  Since $\nu = (m) + (n-m) = (n)$, 
$x$ is a regular unipotent element.  Thus by Proposition 2.5 (ii), 
$\dim \CO_{\Bla} = \dim G\uni + m$.
This implies that 
$\dim \wt\CX_{m,\unip}  = \dim \CO_{\Bla} \ge \dim \CX_{m,\unip}$, 
and so $\dim \CO_{\Bla} = \dim \CX_{m,\unip}$. 
Since $\CX_{m,\unip}$ is irreducible, the claim follows.

\begin{prop}  
Let $\Bmu = (\mu^{(1)}, \mu^{(2)}) \in \CP_{n,2}$ be such that 
$|\mu^{(1)}| = m, |\mu^{(2)}| = n-m$. 
Then $\CO_{\Bmu} \subset \CX_{m,\unip}$, and for $(x,v) \in \CO_{\Bmu}$,  
we have
\begin{align*}
\dim \pi_1\iv(x,v) &= \frac{1}{2}(\dim \CX_{m,\unip} - \dim \CO_{\Bmu}) \\
                   &= \frac{1}{2}(\dim G\uni + m - \dim \CO_{\Bmu}).
\end{align*} 
\end{prop}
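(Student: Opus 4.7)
I plan to prove the proposition by adapting the strategy of Proposition 1.14 and Corollary 1.15 to the $GL(V) \times V$ setting, combining a Steinberg-type upper bound with an explicit construction of a lower-bounding subvariety.

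First, for the inclusion $\CO_{\Bmu} \subseteq \CX_{m,\unip}$: given $(x,v) \in \CO_{\Bmu}$, the subspace $E^xv \subseteq V$ is $x$-stable, contains $v$, and has dimension $|\mu^{(1)}| = m$ by construction of the bijection in 2.3. Since $x$ restricts to a unipotent operator on both $E^xv$ and $V/E^xv$, one can choose a complete $x$-stable flag $F_\bullet$ on $V$ with $F_m = E^xv$, which exhibits an element of $\pi_1\iv(x,v)$, so $(x,v) \in \CX_{m,\unip}$. By the discussion around (2.6.3) together with Proposition 2.5(ii), $\dim \CX_{m,\unip} = \dim G\uni + m$ and $\dim \CO_{\Bmu} = \dim G\uni + m - 2n(\nu)$ where $\nu = \mu^{(1)} + \mu^{(2)}$, so the asserted equality reduces to showing $\dim \pi_1\iv(x,v) = n(\nu)$.

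For the upper bound I would introduce the Steinberg-type variety
\[
\CZ_1 = \{(x, v, gB, g'B) \in G\uni \times V \times G/B \times G/B : g\iv xg,\ (g')\iv xg' \in U,\ g\iv v,\ (g')\iv v \in M_m\},
\]
and stratify $G/B \times G/B$ by the diagonal $G$-orbits $X_w$ ($w \in W$), with representatives $(B, \dot wB)$. The stratum $\CZ_{1,w}$ fibres over $X_w$ with typical fibre $(U \cap \dot wU\dot w\iv) \times (M_m \cap \dot wM_m)$, so $\dim \CZ_{1,w} = \dim X_w + \dim(U \cap \dot wU\dot w\iv) + \dim(M_m \cap \dot wM_m)$. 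Using the Bruhat-theoretic identity $\dim X_w + \dim(U \cap \dot wU\dot w\iv) = \dim G\uni$ and the obvious bound $\dim(M_m \cap \dot wM_m) \leq m$, this yields $\dim \CZ_{1,w} \leq \dim G\uni + m = \dim \wt\CX_{m,\unip}$. The projection $p: \CZ_1 \to \CX_{m,\unip}$ has fibre $\pi_1\iv(x,v) \times \pi_1\iv(x,v)$ over $(x,v) \in \CO_{\Bmu}$, so $2\dim \pi_1\iv(x,v) + \dim \CO_{\Bmu} \leq \dim G\uni + m$, giving $\dim \pi_1\iv(x,v) \leq n(\nu)$.

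The matching lower bound is exhibited by the subvariety $Y = \{F_\bullet \in \pi_1\iv(x,v) : F_m = E^xv\}$, which is isomorphic to the product of classical Springer fibres $\CB_{x_1}^{GL(E^xv)} \times \CB_{x_2}^{GL(V/E^xv)}$, where $x_1 = x|_{E^xv}$ has Jordan type $\mu^{(1)}$ and $x_2$ is the induced operator on $V/E^xv$ of type $\mu^{(2)}$. Steinberg's formula for $GL$-Springer fibres then gives $\dim Y = n(\mu^{(1)}) + n(\mu^{(2)}) = n(\mu^{(1)} + \mu^{(2)}) = n(\nu)$, the middle equality being the elementary additivity of $n(\la) = \sum_i (i-1)\la_i$ under row-wise partition sums. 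The main obstacle I anticipate is the bookkeeping in the Steinberg-type stratification for the upper bound, but this closely mirrors the strategy already carried out in Proposition 1.14.
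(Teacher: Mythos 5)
Your argument is correct, but it takes a genuinely different route from the paper. The paper's proof is essentially a reduction to the literature: the inclusion $\CO_{\Bmu} \subset \CX_{m,\unip}$ is obtained from the closure-order result (Proposition 2.4, i.e.\ [AH, Theorem 3.9]) together with (2.6.3), and the fibre dimension is obtained by identifying $\pi_1$ with the map $\pi_{\Bla}:\wt\CF_{\Bla}\to\ol\CO_{\Bla}$ of [AH, 3.2] for $\Bla = ((m),(n-m))$ and quoting [AH, Proposition 4.4(1)]. You instead reprove the statement from scratch: the inclusion comes from directly refining $E^xv$ (of dimension $m$, containing $v$) to a complete $x$-stable flag, the upper bound $\dim\pi_1\iv(x,v)\le n(\nu)$ comes from a Steinberg-type variety of pairs of flags stratified by the $G$-orbits $X_w$ (this is in effect the $s=1$ case of the paper's later Proposition 2.15, so there is no circularity, since 2.15 does not rely on 2.7), and the matching lower bound comes from the closed subvariety $\{F_\bullet : F_m = E^xv\}\simeq \CB_{x_1}\times\CB_{x_2}$ together with Steinberg's dimension formula and the additivity $n(\mu^{(1)})+n(\mu^{(2)})=n(\mu^{(1)}+\mu^{(2)})$ for row-wise sums. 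What each approach buys: the paper's proof is two lines and leans on Achar--Henderson (whose result also gives equidimensionality of the fibres, which you do not recover, though the proposition does not require it); yours is self-contained, avoids invoking the closure-order theorem, and makes the identification of the fibre dimension with $n(\nu)$ completely explicit.
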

\begin{proof}
If we put $\Bla = ((m), (n-m))$, then $\Bmu \le \Bla$. Hence 
by Proposition 2.4 and by (2.6.3) 
we have $\CO_{\Bmu} \subset \ol\CO_{\Bla} = \CX_{m,\unip}$.
Here we have $\mu^{(1)} \le (m), \mu^{(2)} \le (n-m)$ with respect 
to the dominance order of partitions.  
Note that our map $\pi_1: \wt\CX_{m,\unip} \to \CX_{m,\unip}$ 
coincides with the map $\pi_{\Bla} : \wt\CF_{\Bla} \to \ol\CO_{\Bla}$
for $\Bla = ((m), (n-m))$
given in [AH, 3.2].  Hence by Proposition 4.4 (1) in [AH]
we have $\dim \pi_1\iv(x,v) = (\dim \CO_{\Bla} - \dim \CO_{\Bmu})/2$.
The proposition follows. 
\end{proof}

\para{2.8.}
We consider the diagram 
\begin{equation*}
\begin{CD}
T @<\a_0<< \wt\CY_m @> \p>> \CY_m,
\end{CD}
\end{equation*}
where $\a_0(x,v, gT) = p(g\iv xg)$ ($p: B \to T$ is the projection). 
Let $\CE$ be a tame local system on $T$.
Let $W_{\Bm,\CE}$ be the stabilizer of $\CE$ in $W_{\Bm}$.  Since 
$\p$ is a finite Galois covering with Galois group $W_{\Bm,\CE}$, 
$\p_*\a_0^*\CE$ is a semisimple 
local system on $\CY_m$, 
and is decomposed as 
\begin{equation*}
\tag{2.8.1}
\p_*\a_0^*\CE \simeq \bigoplus_{\r \in W_{\Bm,\CE}\wg}\r \otimes \CL_{\r},
\end{equation*} 
where $\CL_{\r}$ is a simple local system 
on $\CY_m$. 
\par
We consider the diagram 
\begin{equation*}
\begin{CD}
T @<\a<< \wt\CX_m @>\pi >>  \CX_m,
\end{CD}
\end{equation*}
where $\a(x,v, gB) = p(g\iv xg)$.  
We consider the complex $K_{m, T,\CE} = \pi_*\a^*\CE[\dim \CX_m]$.
In the case where $\CE = \Ql$, the structure of $K_{m,T,\CE}$ 
was described by Finkelberg and Ginzburg [FG, Corollary 5.4.2].  
The general case is done in a similar way, namely we have the following 
result.
\begin{thm} 
$K_{m,T,\CE}$ is a semisimple perverse sheaf on $\CX_m$ equipped with 
$W_{\Bm,\CE}$-action, and is 
decomposed as
\begin{equation*}
\pi_*\a^*\CE[\dim \CX_m] \simeq \bigoplus_{\r \in W_{\Bm,\CE}\wg}
              \r \otimes \IC(\CX_m, \CL_{\r})[\dim \CX_m].
\end{equation*}
\end{thm}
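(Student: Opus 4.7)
The plan is to show that the proper morphism $\pi : \wt\CX_m \to \CX_m$ is small. Once smallness is established, since $\wt\CX_m \simeq G\times^B(B\times M_m)$ is smooth and irreducible of dimension $\dim G + m = \dim\CX_m$ (by (2.6.1) and (2.6.2)) and $\a^*\CE$ is a local system on $\wt\CX_m$, standard IC theory forces
\[
K_{m,T,\CE} \;=\; \pi_*\a^*\CE[\dim\CX_m] \;\simeq\; \IC\bigl(\CX_m,\,\p_*\a_0^*\CE|_{\CY_m}\bigr)[\dim\CX_m],
\]
because by (2.6.2) the open dense smooth subset $\CY_m \subset \CX_m$ is a locus over which $\pi$ restricts to the finite \'etale Galois cover $\p$ with group $W_{\Bm}$. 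Combining with the decomposition (2.8.1) of $\p_*\a_0^*\CE$ as $\bigoplus_{\r\in W_{\Bm,\CE}\wg}\r\otimes \CL_\r$ and the additivity of $\IC$ over direct sums then yields the stated formula, with the $W_{\Bm,\CE}$-module structure inherited from the Galois action on $\p_*\a_0^*\CE$.

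The smallness verification rests on Proposition 2.7. The unipotent stratum $\CX_{m,\unip}$ has codimension $n=\rk G$ in $\CX_m$, since $\dim\CX_m-\dim\CX_{m,\unip}=\dim G - \dim G_\unip = n$. For each $G$-orbit $\CO_{\Bmu}\subset \CX_{m,\unip}$ and $(x,v)\in\CO_{\Bmu}$, unipotence gives $\pi^{-1}(x,v)=\pi_1^{-1}(x,v)$, so Proposition 2.7 yields
\[
2\dim\pi^{-1}(x,v)\;=\;\dim\CX_{m,\unip}-\dim\CO_{\Bmu}\;=\;\codim(\CO_{\Bmu},\CX_m)-n,
\]
and the strict inequality $2\dim\pi^{-1}(x,v)<\codim(\CO_{\Bmu},\CX_m)$ follows from $n\ge 1$. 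For a general $(x,v)\in\CX_m$ with Jordan decomposition $x=su$, I would conjugate so that $s\in T$ and $u\in U$, and then partition $\pi^{-1}(x,v)$ according to the finite set of cosets in $W_s\backslash W$, where $W_s\subset W$ is the stabilizer of $s$: for each representative $\dw$, the corresponding piece is isomorphic to a fiber of the analogous map $\pi_L$ for the Levi $L=Z_G(s)$ acting on $L\times V^{(s)}$, where $V^{(s)}$ denotes the $s$-eigenspace decomposition of $V$ together with the intersection of $\dw^{-1}M_m$ with these eigenspaces. Applying Proposition 2.7 inside each $L$-factor and accounting for the discrete $W_s\backslash W$ index delivers the required strict inequality on every non-open stratum.

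The main obstacle will be the Levi-reduction bookkeeping of the second step: one must verify carefully that the constraint $g^{-1}v\in M_m$ decomposes compatibly with the $s$-eigenspace decomposition of $V$, so that the Levi-side fiber problem is again of the form treated by Proposition 2.7 and the argument applies recursively. This is standard but somewhat intricate. Everything else mirrors the argument of Finkelberg--Ginzburg for the case $\CE=\Ql$ ([FG, Corollary 5.4.2]); the passage to an arbitrary tame local system $\CE$ is automatic once smallness is in hand, since $\a^*\CE$ remains a local system on the smooth variety $\wt\CX_m$ regardless of the choice of $\CE$.
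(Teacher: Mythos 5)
Your overall route is exactly the paper's: properness of $\pi$, smallness, and the identification of $\pi\iv(\CY_m) \to \CY_m$ with the covering $\p$ force $K_{m,T,\CE} \simeq \IC(\CX_m, \p_*\a_0^*\CE)[\dim \CX_m]$, and the stated decomposition with its $W_{\Bm,\CE}$-action then drops out of (2.8.1). The one place you depart from the paper is that the paper does not reprove smallness at all: it simply quotes [FG, Proposition 5.4.1 (i)] (which concerns the map $\pi$ itself, so the passage from $\Ql$ to a general tame $\CE$ is indeed free, as you say). Your substitute argument for smallness is where the gaps are.

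Concretely: (a) Proposition 2.7 asserts the equality $\dim \pi_1\iv(x,v) = \tfrac12(\dim \CX_{m,\unip} - \dim \CO_{\Bmu})$ only for $\Bmu$ with $|\mu^{(1)}| = m$, $|\mu^{(2)}| = n-m$, whereas $\CX_{m,\unip} = \ol\CO_{((m),(n-m))}$ also contains all orbits with $|\mu^{(1)}| < m$; for those the equality you assert is false (over $(1,0)$ the fiber is all of $G/B$, of dimension $\nu_G$, strictly less than $\tfrac12(\dim\CX_{m,\unip})$ when $m>0$). What you actually need there is the semismallness inequality $\le$, which is not Proposition 2.7 but [FG, Corollary 5.4.2] (invoked in 2.10) or [AH]; with the inequality the strict bound on the unipotent strata still follows from the codimension-$n$ shift, so this is repairable. (b) The Levi reduction does not by itself "deliver the required strict inequality on every non-open stratum": writing $L = Z_G(x_s) = \prod_i GL(V_i)$ and bounding each factor by semismallness, one gets $2\dim\pi\iv(x,v) \le \dim L - n + m - \dim \CO_L$, while the stratum through $(x,v)$ has codimension $\dim L - \dim Z(L) + m - \dim\CO_L$ in $\CX_m$; the margin is $n - \dim Z(L)$, which is strictly positive only when $L \ne T$. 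On the strata where $x_s$ is regular but $v$ is non-generic (the $\CY_k$-type pieces with $k < m$) this bookkeeping gives only a non-strict inequality, and strictness has to come from the separate (easy) observation that the fibers there are finite while the codimension is $m-k > 0$. Finally, the stratification itself — the locally closed pieces indexed by (Levi, $L$-orbit data), their dimensions, and the fact that each such piece is either contained in or disjoint from $\CX_m$ — is precisely the "bookkeeping" you defer; without it the smallness claim is not established, so either carry it out along the lines above or cite [FG, Proposition 5.4.1 (i)] as the paper does.
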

\begin{proof}
$\CY_m$ is an open dense smooth subset of $\CX_m$, and 
$\pi\iv(\CY_m) \simeq \wt\CY_m$.   Hence 
the restriction of $\pi_*\a^*\CE$ on $\CY_m$ coincides with 
the local system $\p_*\a_0^*\CE$. 
In Proposition 5.4.1 (i) in [FG], it was proved that 
the map $\pi: \wt\CX_m \to \CX_m$ is small, in the sense 
that there exists a stratification $\CX_m = \coprod_{i \ge 0}X_i$
such that $X_0 = \CY_m$ and that $\dim \pi\iv(x,v) < (\codim X_i)/2$
for $(x,v) \in X_i$ if $i > 0$. 
Then $\pi_*\a^*\CE[\dim \CX_m]$ is a semisimple perverse sheaf, and
is isomorphic to the intersection 
cohomology $\IC(\CX_m, \p_*\a_0^*\CE)[\dim \CX_m]$.
Thus the theorem follows from (2.8.1).
\end{proof}

\para{2.10.}
We consider the case where $\CE = \Ql$.  Then 
$\pi_*\a^*\Ql = \pi_*\Ql$, and 
$\pi_*\Ql|_{\CX_{m,\unip}} \simeq (\pi_1)_*\Ql$. 
Moreover $W_{\Bm,\CE} = W_{\Bm}$. 
Since $\pi_1$ is semismall by [FG, Corollary 5.4.2], and 
$\pi_1$ is $G$-equivariant,  
$(\pi_1)_*\Ql[d'_m]$ ($d'_m = \dim \CX_{m,\unip}$) is 
a $G$-equivariant semisimple perverse sheaf on 
$\CX_{m,\unip}$.  
Since the number of $G$-orbits in $\CX_{m,\unip}$ is finite, 
and the isotropy subgroup of each orbit is trivial, 
we can write as 
\begin{equation*}
\tag{2.10.1}
(\pi_1)_*\Ql[d'_m] \simeq \bigoplus_{\CO \subset \CX_{m,\unip}}
                        \r_{\CO} \otimes A_{\CO}, 
\end{equation*}
where $A_{\CO} = \IC(\ol\CO, \Ql)[\dim \CO]$ and 
$\r_{\CO} = \Hom (A_{\CO}, (\pi_1)_*\Ql[d_m'])$ is a $W_{\Bm}$-module. 
Note that $W_{\Bm} \simeq S_m \times S_{n-m}$, and $W_{\Bm}\wg$ 
is parametrized by the set 
$\CP_{n,2}(m) = \{ (\la, \mu) \in \CP_{n,2} \mid |\la| = m, 
|\mu| = n-m\}$.
 We denote by $V_{(\la,\mu)} = V_{\la}\otimes V_{\mu}$
the (standard) irreducible $W_{\Bm}$-module corresponding to 
$(\la, \mu) \in \CP_{n,2}(m)$, 
We have the following result
as stated in 3.9 (11) in [FGT].
\begin{thm}[Springer correspondence]   
\begin{enumerate}
\item 
$(\pi_1)_*\Ql[d'_m]$ is a semisimple perverse sheaf on $\CX_{m,\unip}$ 
equipped with 
$W_{\Bm}$-action, and is decomposed as 
\begin{equation*}
(\pi_1)_*\Ql[d'_m] 
     \simeq \bigoplus_{\Bmu \in \CP_{n,2}(m)}V_{\Bmu} \otimes 
                \IC(\ol\CO_{\Bmu},\Ql)[\dim \CO_{\Bmu}]
\end{equation*}
\item  For each $\Bmu \in \CP_{n,2}(m)$, 
let $\CL_{\Bmu} = \CL_{\r}$ be the simple local system on $\CY_m$ 
corresponding to $\r = V_{\Bmu} \in W_{\Bm}\wg$. 
Then we have 
\begin{equation*}
\IC(\CX_m, \CL_{\Bmu})|_{\CX_{m,\unip}} \simeq \IC(\ol\CO_{\Bmu}, \Ql)[a],
\end{equation*}
where $a = \dim \CO_{\Bmu} - \dim \CX_{m, \unip}$. 
\end{enumerate}
\end{thm}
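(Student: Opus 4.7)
The plan is to obtain (i) as the Springer correspondence for the enhanced nilpotent cone (due to Achar--Henderson and Travkin), and to derive (ii) by restricting the decomposition of Theorem 2.9 from $\CX_m$ to the closed subvariety $\CX_{m,\unip}$. For (i), I would invoke the semismallness of $\pi_1$ from [FG, Corollary 5.4.2] combined with the smoothness and irreducibility of $\wt\CX_{m,\unip}$ to conclude that $(\pi_1)_*\Ql[d'_m]$ is a semisimple $G$-equivariant perverse sheaf; since every $G$-orbit $\CO_{\Bmu}$ has connected isotropy by Proposition 2.5~(i), the local system attached to each simple summand is trivial, and the decomposition (2.10.1) refines to
\begin{equation*}
(\pi_1)_*\Ql[d'_m] \simeq \bigoplus_{\Bmu \in \CP_{n,2}(m)} \r_{\Bmu} \otimes \IC(\ol\CO_{\Bmu}, \Ql)[\dim \CO_{\Bmu}]
\end{equation*}
for certain finite-dimensional multiplicity spaces $\r_{\Bmu}$. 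The Galois cover $\p : \wt\CY_m \to \CY_m$ with group $W_{\Bm}$ (cf.\ (2.8.1)) equips each $\r_{\Bmu}$ with a $W_{\Bm}$-action, and the identification $\r_{\Bmu} \simeq V_{\Bmu}$ is exactly the enhanced Springer correspondence established in [AH] and [T] (and recorded in [FGT, 3.9~(11)]).

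For part (ii), I would apply Theorem 2.9 with $\CE = \Ql$, so that $W_{\Bm,\CE} = W_{\Bm}$ and the simple modules $V_{\Bmu}$ index the isotypic components. Since $\wt\CX_{m,\unip} = \pi\iv(\CX_{m,\unip})$ and $\pi$ is proper, proper base change along the closed immersion $\CX_{m,\unip} \hra \CX_m$ gives $\pi_*\Ql|_{\CX_{m,\unip}} \simeq (\pi_1)_*\Ql$. Comparing the restriction of Theorem 2.9 with the decomposition of part (i), after unshifting both sides by $[\dim \CX_m]$, yields
\begin{equation*}
\bigoplus_{\Bmu} V_{\Bmu} \otimes \IC(\CX_m, \CL_{\Bmu})|_{\CX_{m,\unip}} \simeq \bigoplus_{\Bmu} V_{\Bmu} \otimes \IC(\ol\CO_{\Bmu}, \Ql)[\dim \CO_{\Bmu} - d'_m].
\end{equation*}
Since the $\IC(\ol\CO_{\Bmu}, \Ql)$ have pairwise distinct supports and the $V_{\Bmu}$ form a complete set of non-isomorphic irreducibles of $W_{\Bm}$, matching off isotypic components on both sides gives the desired formula with $a = \dim \CO_{\Bmu} - \dim \CX_{m,\unip}$.

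The main obstacle is the identification $\r_{\Bmu} \simeq V_{\Bmu}$ in part (i), which is the substantive Springer-theoretic input of the argument. A self-contained proof could compute the top Borel--Moore homology of the fibers $\pi_1\iv(x,v)$ for $(x,v) \in \CO_{\Bmu}$ (whose dimensions are given by Proposition 2.7) and then track the $W_{\Bm}$-action along the parametrization of double partitions from Proposition 2.2; alternatively one may invoke the orbit-by-orbit analysis of [AH] for the map $\pi_{\Bla}$ with $\Bla = ((m),(n-m))$. Once (i) is in hand, (ii) reduces to an elementary uniqueness argument for isotypic decompositions of semisimple perverse sheaves.
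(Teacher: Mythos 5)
There is a genuine gap, and it sits exactly where you place the weight of the argument: the identification $\r_{\Bmu}\simeq V_{\Bmu}$ in part (i). You propose to obtain it by citing the enhanced Springer correspondence of [AH], [T] and the statement [FGT, 3.9 (11)], but this is precisely the shortcut the paper warns against in Remark 2.12: applying [AH, Theorem 4.5] to $\pi_{\Bla}$ with $\Bla=((m),(n-m))$ only yields the decomposition of $(\pi_1)_*\Ql[d'_m]$ with multiplicity spaces of the \emph{same dimension} as $V_{\Bmu}$, the $W_{\Bm}$-action being ignored; equality of dimensions does not pin down the label (for $S_m\times S_{n-m}$ a representation and its transpose-partition partner have equal dimensions), and the $W_{\Bm}$-action relevant here is the specific one induced from Theorem 2.9, i.e.\ from the Galois covering over $\CY_m$, restricted to $\CX_{m,\unip}$. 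The statement in [FGT] is exactly the assertion being proved, so quoting it is circular in this context. Your fallback sketch (compute top Borel--Moore homology of the fibres and ``track the $W_{\Bm}$-action'') is not carried out and is the nontrivial part; note that in the paper the top-cohomology statement (2.13.1) is \emph{deduced from} Theorem 2.11 via a counting argument, not used to prove it.

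The paper resolves this by running the logic in the opposite order from yours: it first proves (ii), up to shift, by a geometric comparison, and then (i) falls out. Concretely, it factors $\pi=\pi''\circ\pi'$ through $\CG_m$, descends the $GL_m\times GL_{n-m}$ Springer sheaves $K_1\boxtimes K_2$ along the diagram (2.11.1) to simple perverse sheaves $A_{\Bmu}$ on $\CG_m$, and identifies $(\pi'')_*A_{\Bmu}\simeq\IC(\CX_m,\CL_{\Bmu})[\dim\CX_m]$ by restricting to the regular semisimple locus (using Theorem 2.9); restricting the same diagram to the unipotent locus and invoking [AH, Proposition 4.6] gives $(\pi''_1)_*B_{\Bmu}\simeq\IC(\ol\CO_{\Bmu},\Ql)[\dim\CO_{\Bmu}]$, and the classical Springer correspondence for $GL$ matches $A_{\Bmu}|_{\unip}$ with $B_{\Bmu}$, whence $\IC(\CX_m,\CL_{\Bmu})|_{\CX_{m,\unip}}\simeq\IC(\ol\CO_{\Bmu},\Ql)$ up to shift. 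Your derivation of (ii) from (i) by taking $V_{\Bmu}$-isotypic components in the $W_{\Bm}$-equivariant restriction of Theorem 2.9 is formally fine, but since (i) with its module structure is exactly what you have not established, the proposal as written does not prove the theorem; you would need to supply the missing comparison (or an equivalent argument pinning down the action) to close the loop.
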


\begin{proof}
The following argument was inspired by [FGT, Theorem 1] and 
[AH, Proposition 4.6].
Put 
\begin{equation*}
\CG_m = \{ (x,v,W) \mid W \subset V, \dim W = m, v \in W, 
                 x(W) \subset W \}.
\end{equation*}
Then $\pi: \wt\CX_m \to \CX_m$ is factored as 
$\pi = \pi''\circ \pi'$, where 
$\pi': \wt\CX_m \to \CG_m, (x,v,gB) \mapsto (x,v, g(M_m))$, 
$\pi'': \CG_m \to \CX_m, (x,v,W) \mapsto (x,v)$. 
Put $\ol V = V/M_m$, and $G_1 = GL(M_m)$, $G_2 = GL(\ol V)$.
Let $B_1$ be a Borel subgroup of $G_1$ which is the stabilizer of the 
flag $M_1 \subset \cdots \subset M_m$ in $G_1$, and 
$B_2$ a Borel subgroup of $G_2$ which is the stabilizer 
of the flag $M_{m+1}/M_m \subset \cdots \subset V/M_m = \ol V$
in $G_2$.  Put $\wt G_i = \{ (x, gB_i) \in G_i \times G_i/B_i 
    \mid g\iv xg \in B_i\}$ 
and $p_i : \wt G_i \to G_i, (x, gB_i) \mapsto x$ for $i = 1,2$. 
We consider the commutative diagram

\begin{equation*}
\tag{2.11.1}
\begin{CD}
\wt G_1 \times \wt G_2 @<<<  \CZ_m  @>>> \wt\CX_m \\
    @V p_1\times p_2 VV                       @VVV          @VV\pi' V   \\
   G_1 \times G_2   @<s<<     \CH_m   @>q>>   \CG_m   \\
     @.                        @.              @VV\pi''V   \\
                    @.                @.      \CX_m, 
\end{CD}
\end{equation*}
where
\begin{equation*}
\begin{split}
\CH_m = \{ (x,v, W, &\f_1, \f_2) \mid (x,v,W) \in \CG_m, \\
                  &\f_1: W \isom M_m, \f_2: V/W \isom \ol V\},
\end{split}
\end{equation*}
and $q$ is the projection on the first three factors, $s$ 
is the map defined by 
\begin{equation*}
s: (x,v,W, \f_1, \f_2) \mapsto (\f_1(x|_W)\f_1\iv, \f_2(x|_{V/W})\f_2\iv).
\end{equation*}
$\CZ_m$ is the fibre product of $\wt\CX_m$ and $\CH_m$ over $\CG_m$, 
which is isomorphic to the fibre product of $\wt G_1 \times \wt G_2$ 
and $\CH_m$ over $G_1 \times G_2$. 
One can check that $q$ is a principal bundle with group 
$G_1 \times G_2$, and that $s$ is a locally trivial fibration 
with smooth connected fibre of dimension $n^2 + m$.
Let $K_i = (p_i)_*\Ql[\dim G_i]$ for $i = 1,2$.  Then it is well-known 
that 
\begin{align*}
\tag{2.11.2}
K_1 &\simeq \bigoplus_{\la \in \CP_m}V_{\la}\otimes 
            \IC(G_1, \CL_{\la})[\dim G_1] \\
K_2 &\simeq \bigoplus_{\mu \in \CP_{n-m}}V_{\mu}\otimes
             \IC(G_2, \CL_{\mu})[\dim G_2], 
\end{align*}
where $V_{\la}$ is the irreducible $S_m$-module, and $\CL_{\la}$ 
is a simple local system defined on $(G_1)\reg$ corresponding to 
$\la \in \CP_m$, and similarly for $G_2$. We consider the complex 
$K_1\boxtimes K_2$ on $G_1\times G_2$. By the property of the maps 
$s,q$ (see 0.2, 0.3), there exists a unique semisimple perverse 
sheaf $A_m$ on $\CG_m$
such that 
\begin{equation*}
\tag{2.11.3}
q^*(A_m)[m^2 + (n-m)^2] \simeq s^*(K_1 \boxtimes K_2)[n^2 + m].
\end{equation*}
Then by using the fact that $\pi$ is small, the following formula
can be proved by a standard argument.
\begin{equation*}
\tag{2.11.4}
\pi''_*(A_m) \simeq \pi_*\Ql[\dim \CX_m].
\end{equation*} 
Now $A_m$ can be decomposed by (2.11.2) as
\begin{equation*}
A_m \simeq \bigoplus_{\Bmu \in \CP_{n,2}(m)}V_{\Bmu} \otimes A_{\Bmu}, 
\end{equation*}
where $A_{\Bmu}$ is a simple perverse sheaf on $\CG_m$ 
defined as in (2.11.3), but  by replacing 
$K_i$ by $\IC(G_i, \CL_{\mu^{(i)}})[\dim G_i]$. 
By Theorem 2.9, we have 
\begin{equation*}
\pi_*\Ql[\dim \CX_m] \simeq \bigoplus_{\Bmu \in \CP_{n,2}(m)}
               V_{\Bmu} \otimes \IC(\CX_m, \CL_{\Bmu})[\dim \CX_m].
\end{equation*}
By considering the restriction of the diagram (2.11.1) to the
regular semisimple part (cf. [SS, Lemma 4.7]), (2.11.4) implies 
that 
\begin{equation*}
\tag{2.11.5}
(\pi'')_*A_{\Bmu} \simeq \IC(\CX_m, \CL_{\Bmu})[\dim \CX_m]
\end{equation*}
for any $\Bmu \in \CP_{n,2}(m)$.
\par
On the other hand, by considering the restriction of the diagram
(2.11.1) to the unipotent part, one gets the diagram 
\begin{equation*}
\begin{CD}
(G_1)\uni \times (G_2)\uni @<s_1<< \CH_{m,\unip} @>q_1>> \CG_{m,\unip}
            @>\pi_1''>> \CX_{m,\unip}.
\end{CD}
\end{equation*}
As in the above case, for 
$\Bmu = (\mu^{(1)},\mu^{(2)}) \in \CP_{n,2}(m)$ one can 
define a simple perverse sheaf $B_{\Bmu}$ on $\CG_{m,\unip}$ by 
the condition that 
\begin{equation*}
q_1^*B_{\Bmu}[m^2 + (n-m)^2] \simeq 
   s_1^*\bigl(\IC(\ol\CO_{\mu^{(1)}}, \Ql)[c_{\mu^{(1)}}]
      \boxtimes \IC(\ol\CO_{\mu^{(2)}},\Ql)[c_{\mu^{(2)}}])\bigr)
                      [n^2 + m],
\end{equation*}
where $c_{\la} = \dim \CO_{\la}$ for a partition $\la$. 
It was proved by [AH, Proposition 4.6] that
\begin{equation*}
\tag{2.11.6}
(\pi''_1)_* B_{\Bmu} \simeq \IC(\ol\CO_{\Bmu}, \Ql)[\dim \CO_{\Bmu}].
\end{equation*}
By the Springer correspondence for $GL_n$,
the restriction of $\IC(G_i, \CL_{\mu^{(i)}})$ on $(G_i)\uni$
coincides with $\IC(\ol\CO_{\mu^{(i)}},\Ql)$, up to shift, 
for $i = 1,2$. 
Thus by comparing (2.11.5) with (2.11.6), one sees that 
the restriction of $\IC(\CX_m, \CL_{\Bmu})$ on $\CX_{m,\unip}$
coincides with $\IC(\ol\CO_{\Bmu}, \Ql)$, up to shift. 
This proves (ii), up to shift.  Then (i) follows. 
The degree shift $a$ in (ii) can be easily computed from (i).  
\end{proof}

\remark{2.12.}
By applying Theorem 4.5 in [AH] to the case 
$\pi_{\Bla}: \wt\CF_{\Bla} \to \CX_{m,\unip}$ with $\Bla = ((m), (n-m))$, 
one gets 
\begin{equation*}
(\pi_1)_*\Ql[d'_m] \simeq \bigoplus_{\Bmu\in \CP_{n,2}(m)}
           V_{\Bmu} \otimes \IC(\ol\CO_{\Bmu}, \Ql)[\dim \CO_{\Bmu}],
\end{equation*}
where $V_{\Bmu}$ is regarded as a multiplicity space ignoring the
$W_{\Bm}$-action. By comparing this with (2.10.1), we obtain 
\begin{equation*}
(\pi_1)_*\Ql[d'_m] \simeq \bigoplus_{\Bmu \in \CP_{n,2}(m)}
           \r_{\CO_{\Bmu}} \otimes \IC(\ol\CO_{\Bmu}, \Ql)[\dim \CO_{\Bmu}],
\end{equation*}
where $\dim \r_{\CO_{\Bmu}} = \dim V_{\Bmu}$. 
However, in order to show that $\r_{\CO_{\Bmu}} = V_{\Bmu}$, 
one needs some additional arguments.  

\para{2.13.}
For $z = (x,v) \in \CX_m$, put
\begin{equation*} 
\CB^{(m)}_z = \{ gB \in G/B \mid x\iv gx \in B, g\iv v \in M_m\}.
\end{equation*}
Then $\pi\iv(z) \simeq \CB^{(m)}_z$. Since 
$\CH^i_z(\pi_*\Ql) \simeq H^i(\CB^{(m)}_z, \Ql)$,  
$H^i(\CB^{(m)}_z,\Ql)$ is equipped with the $W_{\Bm}$-module structure,
called the Springer representation of $W_{\Bm}$.   
By taking the stalk in the formula in Theorem 2.11 (i), we have, 
for $z \in \CX_{m,\unip}$,  
\begin{equation*}
H^i(\CB^{(m)}_z,\Ql) \simeq \bigoplus_{\Bmu \in \CP_{n,2}(m)}
     V_{\Bmu} \otimes \CH_z^{i - \dim \CX_{m,\unip} + \dim \CO_{\Bmu}}
                           \IC(\ol\CO_{\Bmu},\Ql).
\end{equation*}
By Proposition 2.7, $\dim \CB^{(m)}_z = 
(\dim \CX_{m,\unip} - \dim \CO_{\Bmu})/2$ for $z \in \CO_{\Bmu}$ with 
$\Bmu \in \CP_{n,2}(m)$. . 
Put $d_{\Bmu} = \dim \CB^{(m)}_z$. It follows from the above formula 
that 
\begin{equation*}
H^{2d_{\Bmu}}(\CB^{(m)}_z,\Ql) \supset V_{\Bmu} \otimes 
                \CH^0_z\IC(\ol\CO_{\Bmu}, \Ql) \simeq V_{\Bmu}
\end{equation*}
since $\CH^0_{(x,v)}\IC(\ol\CO_{\Bmu},\Ql) \simeq \Ql$.
Let $c_{\Bmu}$ be the number of irreducible components of $\CB_z^{(m)}$, for 
$z \in \CO_{\Bmu}$, of maximum dimension.  By the above formula, we have 
$\dim V_{\Bmu} \le c_{\Bmu}$.  By a similar argument as in 
[SS, Lemma 3.5 (iii)], one can show 
\begin{equation*}
\sum_{\Bmu \in \CP_{n,2}(m)}c_{\Bmu}^2 \le |W_{\Bm}|.
\end{equation*}
It follows that $\dim V_{\Bmu} = c_{\Bmu}$.   
Thus we have an analogue of the original Springer correspondence.
\par\medskip\noindent
(2.13.1) \ The top cohomology $H^{2d_{\Bmu}}(\CB^{(m)}_z, \Ql)$ 
gives rise to an irreducible $W_{\Bm}$-module $V_{\Bmu}$, and 
the map $z \mapsto V_{\Bmu}$ gives a bijective correspondence
\begin{equation*}
\CX_{m,\unip}/G \lra (S_m \times S_{n-m})\wg.
\end{equation*}
\par
We consider the special case where $\Bmu = ((1^m),(1^{n-m}))$. 
Take $z = (x,v) \in \CO_{\Bmu}$.  We may assume that 
$z \in U \times M_m$. 
Then $(x-1)|_{M_m} = 0, (x-1)|_{\ol V} =0$ and $(x-1)V = M_m$. 
It follows that $\CB^{(m)}_z$ turns out to be 
a closed subvariety of $G/B$  consisting of flags  
$\{ (V_1 \subset \cdots \subset V_{n-1} \subset V) \}$
such that $V_m = M_m$, which is isomorphic to    
$\CB_1 \times \CB_2$, where $\CB_i = G_i/B_i$ 
is the flag variety of $G_i$ under the notation in the proof 
of Theorem 2.11.
Now $H^{\bullet}(\CB^{(m)}_z, \Ql)$ is a graded $W_{\Bm}$-module, 
and $H^{\bullet}(\CB_i, \Ql)$ are graded $S_m, S_{n-m}$-modules
for $i = 1,2$.  Then we have an isomorphism of graded 
$S_m \times S_{n-m}$-modules,
\begin{equation*}
\tag{2.13.2}
H^{\bullet}(\CB^{(m)}_z, \Ql) \simeq 
          H^{\bullet}(\CB_1,\Ql) \otimes H^{\bullet}(\CB_2, \Ql).
\end{equation*}

\para{2.14.}
We fix $s \in T$ and an integer $m \ge 0$.  We consider a variety 
\begin{equation*}
\begin{split}
\CZ = \{ (x,v, &gB, g'B) \in G \times V \times G/B \times G/B  \\
         & \mid g\iv xg \in sU, g\iv v \in M_m, 
                {g'}\iv xg' \in sU, {g'}\iv v \in M_m \}.  
\end{split}
\end{equation*}
We consider a partition $G/B \times G/B = \coprod_{w \in S_n}X_w$ into 
$G$-orbits, and put $\CZ _w = p\iv (X_w)$, where $p: \CZ \to G/B \times G/B$
is the projection onto the last two factors. 
Recall that $\nu_G = \dim U$. 
\par
The following result is an analogue of Proposition 1.14 to the enhanced case.

\begin{prop}  
Let $\CO$ be a $G$-orbit in $\CX$, and put $c = \dim \CO$.
\begin{enumerate}
\item
We have $\dim (\CO \cap (sU \times M_m)) \le (c + m)/2$.
\item
For $z = (x,v) \in \CO$, we have
\begin{equation*}
\dim \{gB \in G/B \mid g\iv xg \in sU, g\iv v \in M_m\} \le \nu_G - (c - m)/2.
\end{equation*}
\item
$\dim \CZ_w \le 2\nu_G + m$ for all $w \in S_n$.  Hence 
$\dim \CZ \le 2\nu_G + m$. 

\end{enumerate}
\end{prop}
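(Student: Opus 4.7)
The plan is to mirror the strategy used for Proposition 1.14, establishing the three parts in the order (iii), (ii), (i), with (i) and (ii) derived as consequences of (iii) via dimension-counting on auxiliary varieties. The essential new feature to track is that the vector space $V$ contributes an additional summand $m$ to every bound, in place of the long-root term $b_w$ that appeared in the symplectic setting.

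For (iii), I would exploit the fact that $p: \CZ \to G/B \times G/B$ is $G$-equivariant and that $X_w$ is a single $G$-orbit with representative $(B, \dw B)$. It then suffices to bound
\begin{equation*}
\dim \{(x,v) \in sU \times M_m \mid (\dw\iv x\dw, \dw\iv v) \in sU \times M_m\} \leq 2\nu_G + m - \dim X_w.
\end{equation*}
Writing $x = su$ with $u \in U$, uniqueness of Jordan decomposition together with $\dw\iv x\dw \in sU$ forces $\dw s \dw\iv = s$ (otherwise the set is empty) and $u \in U \cap {}^wU$, while the $v$-condition gives $v \in M_m \cap w(M_m)$. Hence the left side is at most $\dim(U \cap {}^wU) + \dim(M_m \cap w(M_m))$. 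Combined with the standard identity $\dim X_w = 2\nu_G - \dim(U \cap {}^wU)$ and the trivial bound $\dim(M_m \cap w(M_m)) \leq m$, the desired inequality follows.

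For (ii), I would use the first projection $q: \CZ \to G \times V$. For a $G$-orbit $\CO$ of dimension $c$, part (iii) yields $\dim q\iv(\CO) \leq 2\nu_G + m$. Since for $z = (x,v) \in \CO$ one has $q\iv(z) \simeq Y_z \times Y_z$, where $Y_z$ is the variety appearing in (ii), it follows that $2\dim Y_z + c \leq 2\nu_G + m$, i.e., $\dim Y_z \leq \nu_G - (c-m)/2$.

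For (i), I would introduce the auxiliary variety $\CX_{\CO} = \{((x,v), gB) \in \CO \times G/B \mid g\iv xg \in sU, g\iv v \in M_m\}$, with projections $\a: \CX_{\CO} \to \CO$ (fibres isomorphic to $Y_z$) and $\b: \CX_{\CO} \to G/B$ (fibres isomorphic to $\CO \cap (sU \times M_m)$). From (ii), $\dim \CX_{\CO} \leq c + \nu_G - (c-m)/2 = \nu_G + (c+m)/2$, and then the $\b$-projection gives $\dim(\CO \cap (sU \times M_m)) \leq \dim \CX_{\CO} - \nu_G \leq (c+m)/2$. I expect no genuine obstacle here; the main technical point is simply bookkeeping the $m$-contribution, since the analogous reduction has already been worked out in the symplectic case of Proposition 1.14.
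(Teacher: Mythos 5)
Your proposal is correct and follows essentially the same route as the paper: part (iii) by reducing to the representative $(B,\dw B)$ of $X_w$ and bounding the fibre by $\dim(U\cap{}^wU)+\dim(M_m\cap w(M_m))\le\dim(U\cap{}^wU)+m$, then (ii) and (i) by exactly the fibration/dimension counts used for Proposition 1.14. One small point of wording: the identification of the two $T$-components of $x\in B\cap{}^wB$ (forcing $wsw\iv=s$ and $u=u'\in U\cap{}^wU$) follows from $B\cap{}^wB=T(U\cap{}^wU)$ and uniqueness in the semidirect products $B=T\ltimes U$ and ${}^wB=T\ltimes{}^wU$, not from uniqueness of the Jordan decomposition, since $s$ and $u$ need not commute.
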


\begin{proof}
First we show (iii). 
The projection $p$ is $G$-equivariant. A representative of the $G$-orbit $X_w$ 
is given by $(B, wB)$.  Hence in order to show (iii), it is enough to see that 

\begin{equation*}
\tag{2.15.1}
\begin{split}
\dim \{ (x,v ) \in sU \times &M_m \mid (\dw\iv x\dw, \dw\iv v) \in sU \times M_m \} \\ 
               &\le 2\nu_G + m - \dim X_w 
\end{split}
\end{equation*}
for each $w \in S_n$, where $\dw$ is a representative of $w$ in $G$.  
Then $x \in B \cap {}^wB$ can be written as 
$x = su = s'u'$ with $u \in U$, $s' = wsw\iv \in T$, $u' \in {}^{w}U$.
Hence $s' = s$ and $u = u' \in U \cap {}^wU$. 
Moreover $v \in M_m \cap w(M_m)$.  
It follows that the left hand side of (2.15.1) is less than or equal to 
$\dim  (U \cap {}^wU) + \dim (M_m  \cap w(M_m)) \le \dim (U \cap {}^wU) + m$. 
Since $\dim X_w = 2\nu_G - \dim (U \cap {}^wU)$, we obtain (2.15.1).  
Thus (iii) follows.  
\par
The proof of (ii) and (i) is completely similar to the proof of 
(ii) and (i) of Proposition 1.14.
\end{proof}

\para{2.16.}
Let $\CO$ be a $G$-orbit containing $(su,v_0) \in \CX$, where 
$su = us$, $s$: semisimple, $u$: unipotent. 
Let $L = Z_G(s)$.  Then $L \simeq \prod_{i = 1}^t G_i$, 
and $L \times V \simeq \prod_i (G_i \times V_i)$, where  
$V = V_1\oplus \cdots \oplus V_t$ is a decomposition of $V$ to eigenspaces of 
$s$, and $G_i = GL(V_i)$ for each $i$.  
Let $\CO_0$ be the $L$-orbit containing $(u,v_0)$ in $L \times V$.   
Then under the decomposition of $L \times V$ as above, $\CO_0$ is isomorphic to
$\prod_i\CO_i$, where $\CO_i$ is a $G_i$-orbit in $(G_i)\uni \times V_i$. 
Under the natural parametrization in Proposition 2.2, one can 
write $\CO_i = \CO_{\Bla_i}$ for $\Bla_i \in \CP_{n_i,2}$, where $n_i = \dim V_i$.
Recall that $m(\Bla) = |\la^{(1)}|$ for 
$\Bla = (\la^{(1)}, \la^{(2)}) \in \CP_{n,2}$ ([SS, 5.3]).
For each $G$-orbit $\CO$ in $\CX$ as above, we define an integer $\mu(\CO)$ by 
$\mu(\CO) = \sum_im(\Bla_i)$. 
As a corollary to Proposition 2.15, we have the following.

\begin{cor}  
Let $\CO$ be a $G$-orbit in $\CX$ containing $(su,v_0)$ as in 2.16.  
Assume that $s \in T, u \in U$ and that $v_0 \in M_m$ for $m = \mu(\CO)$.  Then we have
\begin{equation*}
\dim (\CO \cap (sU \times M_m)) = (\dim \CO + m)/2.
\end{equation*}
\end{cor}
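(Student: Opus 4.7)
The plan is to adapt the proof of Corollary 1.15 to the enhanced setting, with Proposition 2.15 (i), (ii) replacing Proposition 1.14 and Proposition 2.7 (applied componentwise to the Levi $L = Z_G(s)$) supplying the matching lower bound. First I would introduce the incidence variety
\begin{equation*}
\CX_\CO = \{((x,v), gB) \in \CO \times G/B \mid g\iv xg \in sU,\ g\iv v \in M_m\}
\end{equation*}
with projections $\alpha : \CX_\CO \to \CO$ and $\beta : \CX_\CO \to G/B$. The fibre $\alpha\iv(z)$ equals the variety $Y_z$ of Proposition 2.15 (ii), and a direct check shows $Y_{h\cdot z} = h \cdot Y_z$ inside $G/B$, so $\alpha$ has constant fibre dimension $d := \dim Y_{z_0}$; meanwhile every non-empty fibre of $\beta$ is a translate of $\CO \cap (sU \times M_m)$, and the existence of $z_0$ makes $\beta$ surjective. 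Hence
\begin{equation*}
d + \dim\CO \;=\; \dim \CX_\CO \;=\; \dim\bigl(\CO \cap (sU \times M_m)\bigr) + \nu_G,
\end{equation*}
and Proposition 2.15 (ii) gives $d \le \nu_G - (\dim \CO - m)/2$, so the corollary is equivalent to the matching lower bound.

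To obtain this, I would exhibit an explicit subvariety of $Y_{z_0}$ of the required dimension by passing to the Levi $L = Z_G(s) = \prod_i G_i$ with $G_i = GL(V_i)$. Set $B_L = B \cap L$ (a Borel of $L$) and $U_L = U \cap L$ (its unipotent radical). For any $g \in L$ one has $g\iv(su)g = s (g\iv u g) \in sU_L \subset sU$, so the natural embedding $L/B_L \hra G/B$ sends
\begin{equation*}
\{gB_L \in L/B_L \mid g\iv ug \in U_L,\ g\iv v_0 \in M_m\}
\end{equation*}
into $Y_{z_0}$. Since $s \in T$ and the basis $\{e_j\}$ consists of $T$-eigenvectors, $M_m$ is $s$-stable and splits as $M_m = \bigoplus_i (M_m \cap V_i)$, so this subvariety further decomposes as a product $\prod_i Y_i$ with $Y_i = \{g_i B_i \mid g_i\iv u_i g_i \in U_i,\ g_i\iv v_{0,i} \in M_m \cap V_i\}$. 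Setting $m_i := m(\Bla_i)$ and $d_i := \dim(M_m \cap V_i)$, non-emptiness of each $Y_i$ together with Proposition 2.4 and (2.6.3) forces $\CO_{\Bla_i} \subseteq \ol \CO_{((d_i),(n_i-d_i))}$, which under the partial order of [SS, 1.7] yields $m_i \le d_i$; summing and combining with $\sum_i m_i = \mu(\CO) = m = \sum_i d_i$ forces $m_i = d_i$ for all $i$.

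Each $Y_i$ is therefore identified, up to $G_i$-conjugation, with $(\pi_1^{G_i})\iv(u_i, v_{0,i})$ for $(u_i, v_{0,i}) \in \CO_{\Bla_i} \subset \CX^{G_i}_{m_i,\unip}$, so Proposition 2.7 together with Proposition 2.5 (ii) gives $\dim Y_i = \tfrac12 (a(\Bla_i) + m_i - n_i)$. Summing over $i$ and computing $\dim \CO = n^2 - \sum_i a(\Bla_i)$ (from Proposition 2.5 (i) and the product decomposition $Z_G(z_0) = \prod_i Z_{G_i}(u_i, v_{0,i})$) yields
\begin{equation*}
d \;\ge\; \sum_i \dim Y_i \;=\; \frac{\sum_i a(\Bla_i) + m - n}{2} \;=\; \nu_G - \frac{\dim \CO - m}{2},
\end{equation*}
which is the required lower bound. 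Substituting back into the displayed identity of the first paragraph then gives $\dim(\CO \cap (sU \times M_m)) = (\dim \CO + m)/2$.

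The main technical obstacle is the pigeonhole step forcing $m_i = d_i$: one must verify that the partial order on $\CP_{n_i,2}$ from [SS, 1.7] reduces $\Bla_i \le ((d_i),(n_i-d_i))$ to the single numerical inequality $m_i \le d_i$, after which the sum constraint $\mu(\CO) = m$ pins down the factorwise alignment of the flag $M_m$ with the Levi decomposition and allows Proposition 2.7 to be applied uniformly on each component.
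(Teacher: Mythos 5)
Your overall architecture coincides with the paper's: the incidence variety $\CX_{\CO}$ with its two projections, the upper bound $\dim Y_z \le \nu_G - (c-m)/2$ from Proposition 2.15 (ii), and a matching lower bound obtained by embedding into $Y_{z_0}$ a product over the factors $G_i$ of $L = Z_G(s)$ of fibres of the maps $\pi_1$, whose dimensions come from Proposition 2.7 (together with Proposition 2.5); this is exactly the paper's argument, which states the lower bound as ``$Y_z$ contains a variety isomorphic to $(\pi_1^L)\iv(u,v_0)$''. The difference is that you try to \emph{prove} the compatibility $d_i := \dim (M_m \cap V_i) = m_i := m(\Bla_i)$, which the paper leaves implicit, and this is precisely where your argument breaks down.

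The pigeonhole step is false. Non-emptiness of $Y_i$ does give $\CO_{\Bla_i} \subseteq \ol\CO_{((d_i),(n_i-d_i))}$, i.e.\ $\Bla_i \le ((d_i),(n_i-d_i))$, but this does not imply $m_i \le d_i$: the Achar--Henderson order does not bound $|\la^{(1)}|$ from above. Concretely, for $G_i = GL_2$ take $u_i = 1$ and $v_{0,i} \ne 0$; then $\Bla_i = ((1,1),\emptyset)$, so $m_i = 2$, while $(u_i,v_{0,i})$ lies in $\CX_{1,\unip} = \ol\CO_{((1),(1))}$, so $\Bla_i \le ((1),(1))$ with $d_i = 1 < m_i$. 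Worse, $d_i = m_i$ is simply not a consequence of the hypotheses as stated: take $n = 4$, $s = \mathrm{diag}(a,b,a,b)$ with $a \ne b$, $u$ trivial on the $a$-eigenspace $\langle e_1,e_3\rangle$ and regular unipotent on $\langle e_2,e_4\rangle$ (with $u(e_4) \in e_4 + \langle e_2 \rangle$, so $u \in U$), and $v_0 = e_1$. Then $s \in T$, $u \in U$, $m = \mu(\CO) = 2$ and $v_0 \in M_2$, yet $(d_1,d_2) = (1,1) \ne (2,0) = (m_1,m_2)$; in this configuration your subvariety $\prod_i Y_i$ is a single point, far below the required dimension $\nu_G - (c-m)/2 = 1$, and indeed $Y_{z_0}$ itself reduces to a point, so the equality you are aiming at fails for this representative. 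Hence no argument can extract $d_i = m_i$ from the stated hypotheses; what is really needed (and what the paper's one-line containment presupposes) is that the representative $(s,u,v_0)$, equivalently the ordering of the $T$-eigenbasis, be chosen \emph{adapted} to the eigenspace decomposition, so that $M_m \cap V_i$ is the $m_i$-dimensional flag subspace of $V_i$. Such a representative always exists in the orbit, since reordering the basis while preserving the relative order inside each $V_i$ keeps $s \in T$ and $u \in U$. Once adaptedness is granted, your identification of $Y_i$ with $(\pi_1^{G_i})\iv(u_i,v_{0,i})$, the use of Propositions 2.5 and 2.7, and the final dimension count all agree with the paper's proof.
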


\begin{proof}
The proof is similar to the proof of Corollary 1.15.
Let $\CX_{\CO}$ be the variety defined similar to (1.14.2).  
Let $c = \dim \CO$.  
By Proposition 2.15 (ii),
we have $\dim Y_z \le \nu_G - (c - m)/2$ for any $z = (x,v) \in \CO$, 
where $Y_z$ is the variety appearing there. 
We want to show that 
\begin{equation*}
\tag{2.17.1} 
\dim Y_z = \nu_G - (c - m)/2.
\end{equation*}
Let $\pi^L_1: (\wt\CX_L)_{m,\unip} \to (\CX_L)_{m,\unip}$ be the map 
$\pi_1$ defined in 2.6 with respect to $\CX_L = L \times V$.
Since $\CX_L \simeq \prod_i(GL_{n_i} \times V_i)$, Proposition 2.7 implies 
that 
$\dim (\pi_1^L)\iv(u,v_0) = \nu_G - (c - m)/2$.  
Here $Y_z$ contains a variety isomorphic to $(\pi_1^L)\iv(u,v_0)$ for 
$z = (su, v_0)$.  
This implies that $\dim Y_z \ge \nu_G - (c - m)/2$.  Hence (2.17.1) holds.  
Now the corollary follows by a similar argument as in the proof of 
Proposition 2.15 (i).  
\end{proof}
\par\bigskip

\section{Definition of character sheaves}

\para{3.1.}
Let $\wt V$ be a vector space over $\Bk$ of the form 
$\wt V = V_0 \oplus \bigoplus_{i=1}^k(V_i \oplus V_i)$, where 
$\dim V_0 = 2n_0$ and $\dim V_i = n_i$ for $ i \ge 1$.
We consider a subgroup 
$G = G_0 \times \prod_{i=1}^k(G_i \times G_i)$ of 
$GL(\wt V)$, where $G_i = GL(V_i)$
for $i \ge 0$.
Let $\th$ be an involutive automorphism of $G$, where  
$\th$ preserves $G_0$ such that $G_0^{\th} \simeq Sp_{2n_0}$ and that 
$\th$ acts as a permutation on the factor $G_i \times G_i$ for each $i$. 
 Let $B$ be a $\th$-stable Borel 
subgroup of $G$ containing a $\th$-stable maximal torus $T$. We denote by
$U$ the unipotent radical of $B$. 
Let $V = V_I$ be a subspace of $\wt V$ of the form 
$V = \bigoplus_{i \in I}V_i$, where $I$ is a subset of 
$[0,k]$.  
Let $H = G^{\th}$. 
We have  
$H \simeq Sp_{2n_0} \times \prod_{i=1}^kGL_{n_i}$.  
Put $G^{\io\th} = \{ g \in G \mid \th(g) = g\iv\}$, which is
isomorphic to $G/H$.
Let 
$\CX = G^{\io\th} \times V \simeq G/H \times V$, on which 
$H$ acts diagonally.  
We call $\CX$ an exotic symmetric space.
We have
\begin{equation*} 
\tag{3.1.1}
\CX \simeq (GL_{2n_0}/Sp_{2n_0} \times V^{\ve}_0) \times 
    \prod_{i =1}^k (GL_{n_i} \times V^{\ve}_i),  
\end{equation*}  
where $V_i^{\ve} = V_i$ or $\{0\}$ according to 
 $i \in I$ or not. 
We say that $\CX$ is of pure exotic type 
if $\CX \simeq GL_{2n_0}/Sp_{2n_0} \times V_0$, 
and of enhanced type or of mirabolic type if 
$\CX \simeq GL_{n_i} \times V_i$.    
Note that the pure exotic type $\CX$ was studied by Kato [Ka1], [Ka2] 
in connection with the exotic nilpotent cone, and by [SS], [SS2] in 
connection with character sheaves, and 
the enhanced type $\CX$ was studied by
Achar and Henderson [AH],  
Finkelberg, Ginzburg and Travkin [FGT] in connection with 
the enhanced nilpotent cone and mirabolic character sheaves. 

\para{3.2.}
The notion of character sheaves was generalized to the case of 
symmetric spaces by Ginzburg [Gi].  Grojnowski [Gr] studied 
the character sheaves on $\CX = GL_{2n}/Sp_{2n}$ in connection 
with the representation theory of the associated Hecke algebra 
due to Bannai, Kawanaka and Song [BKS].  The character sheaves 
on the mirabolic type $\CX$ were introduced by [FG], [FGT]. 
In [HT], Henderson and Trapa proposed to define (unipotent) 
character sheaves on the pure exotic type $\CX$.  In what follows, 
we shall define character sheaves on $\CX$ based on those ideas. 
\par 
Put $\CB = G/B, \wt\CB = G/U$.  Then $T$ acts freely on 
$\wt\CB$ by $(t, gU) \mapsto gt\iv U$, and the natural map 
$q_0: \wt\CB \to \CB$ is a principal $T$-bundle.  
We consider a diagram 
\begin{equation*}
\tag{3.2.1}
\begin{CD}
(\wt\CB \times V) \times \wt\CB @<r<< (G \times V) \times \wt\CB 
    @>q>>  (G \times V) \times \CB @>p>> G \times V,
\end{CD}
\end{equation*}  
where $p$ is the projection on the first two factors, 
$q = \id \times q_0$, and $r$ is defined by 
$r(x,v,gU) = (xgU, v, gU)$.  Then $p$ is proper, $q$ is a principal
$T$-bundle and $r$ is smooth with fibre isomorphic to 
$U$.  Following [HT], we define an action of $H \times H$ on 
these varieties; for $(h_1,h_2) \in H \times H$, 
\par\medskip
on $G \times V$ by 
$(x,v) \mapsto (h_1xh_2\iv, h_1v)$,  
\par
on $(G \times V) \times \CB$ by 
      $(x,v,gB) \mapsto (h_1xh_2\iv, h_1v, h_2gB)$, 
\par
on $(G \times V) \times \wt\CB$ by $(x,v, gU) \mapsto 
                    (h_1xh_2\iv, h_1v, h_2gU)$, 
\par
on $(\wt\CB \times V) \times \wt\CB$ by 
$(g'U,v,gU) \mapsto (h_1g'U,h_1v,h_2gU)$. 
\par\medskip
\noindent 
Then the maps $p,q,r$ are $H \times H$-equivariant.
\par
We define an action of $T$ on $\wt\CB \times V$ so that 
$T$ acts trivially on $V$ (and acts on $\wt\CB$ as before).
Let $\CL$ be a tame local system on $T$.  
We define an action of $T$ on $(G \times V) \times \wt \CB$ so that
$T$ acts on $\wt\CB$ as before, and acts trivially on other factors. 
Then $r$ is equivariant with respect to the action of $T$ on 
$(G \times V) \times \wt\CB$, and the action of $\vD(T)$ on 
$(\wt \CB\times V) \times \wt\CB$, where $\vD(T)$ is the diagonal subgroup 
of $T \times T$. 
\par
We now consider the category 
$\DD_{\CL \boxtimes\CL\iv}(\wt\CB \times V \times \wt\CB)$.   
By the above observation, $r^*$ gives a functor from 
$\DD_{\CL\boxtimes \CL\iv}(\wt\CB \times V \times \wt\CB)$ to
$\DD^T(G \times V \times \wt\CB)$. 
Thus one can define a functor
$\Ch : \DD_{\CL\boxtimes\CL\iv}(\wt\CB \times V \times \wt\CB)
  \to \DD(G \times V)$
by $\Ch(K) = p_!q_{\flat}r^*K[\dim U - \dim T]$ for 
$K \in \DD_{\CL\boxtimes\CL\iv}(\wt\CB \times V \times \wt\CB)$.
Since the diagram (3.2.1) is compatible with the action of 
$H \times H$, $\Ch$ lifts uniquely to a functor 
\begin{equation*}
\tag{3.2.2}
\Ch : \DD_{\CL \boxtimes\CL\iv}^{H \times H}(\wt\CB \times V \times \wt\CB) 
\to \DD^{H \times H}(G \times V).
\end{equation*}
\par
Let $\vT: G \times V \to G^{\io\th} \times V = \CX$ 
be a map defined by $(x,v) \mapsto (x\th(x)\iv, v)$. Then 
$\vT$ is smooth with connected fibre, where the fibre is isomorphic 
to $H$.
Thus $\wt\vT = \vT^*[\dim H]$ gives rise to a fully faithful functor 
$\CM^H(\CX) \to \CM^{H \times H}(G \times V)$.
For a tame local system $\CL$ on $T$, we define $\wh\CX_{\CL}$
as the set of isomorphism classes of $H$-equivariant simple perverse
sheaves $A$ such that $\wt\vT A$ is isomorphic to a perverse constituent 
of $\Ch K$ for some 
$K \in \DD^{H \times H}_{\CL \boxtimes\CL\iv}(\wt\CB \times V \times \wt\CB)$.
We define a set $\wh\CX$ as a union $\bigcup_{\CL}\wh\CX_{\CL}$, 
where $\CL$ runs over the set of tame local systems on $T$. 
An $H$-equivariant simple perverse sheaf $A$ on $G^{\io\th} \times V$ 
is called a character sheaf if $A \in \wh\CX$.

\para{3.3.}
Assume that $\CL$ is a tame local system on $T$ such that 
$\CL^{\otimes m} \simeq \Ql$.
Then $\CL$ is $T$-equivariant with respect to the action of 
$T$ on itself by $t_0: t \mapsto t_0^mt$.  
Let $Z(G) = Z^0(G)$ be the center of $G$.
We define an action of $Z(G)$ on $(\wt\CB \times V) \times \wt\CB$ 
by $z : (g_1U, v, g_2U) \mapsto (z^mg_1U, v, g_2U)$.  We also define 
an action of $Z(G)$ on $G$ by $z : g \mapsto z^mg$, and define 
actions on $(G\times V) \times \wt\CB$,  etc. 
so that $Z(G)$ acts trivially on factors different from $G$.  Then 
all the maps in (3.2.1) are $Z(G)$-equivariant, which commute with 
the action of $H \times H$.    
Since $Z(G)$ is a subtorus of $T$, $\CL$ is $Z(G)$-equivariant.  
Hence any object in 
$\DD^{H \times H}_{\CL\boxtimes\CL\iv}(\wt\CB \times V \times \wt\CB)$
is $Z(G) \times (H \times H)$-equivariant, and we have a refinement 
of (3.2.2), 
\begin{equation*}
\tag{3.3.1}
\Ch : \DD^{H \times H}_{\CL\boxtimes\CL\iv}(\wt\CB \times V \times \wt\CB)
       \to \DD^{Z(G) \times H \times H}(G \times V).
\end{equation*}

\par
We define an action of $Z(G)^{\io\th} \times H$ on $\CX$ by 
$(z, g) : (x,v) \mapsto (z^mgxg\iv, gv)$. (Note that $Z(G)^{\io\th}$ 
is a subtorus of $Z(G)$).  Then $\vT$ is 
$Z(G)^{\io\th} \times H$-equivariant, and we have the following.
\par\medskip\noindent
(3.3.2) \ Any character sheaf $A \in \wh \CX_{\CL}$ 
is $Z(G)^{\io\th} \times H$-equivariant.

\para{3.4.}
Assume that $G = T$ is a torus 
such that $T \simeq T_0 \times T_0$ and $\th$ is the permutation of 
two factors.  
Thus $T$ is a maximal torus of $GL(\wt V)$, where 
$\wt V \simeq V \oplus V$ for a $T_0$-stable subspace $V$.  
We consider the variety 
$\CX = \CX_{T,V} = T^{\io\th} \times V$ with diagonal $T^{\th}$-action.  
Let $\CE$ be a tame local system on 
$T^{\io\th}$, and put $\CL = \vT_T^*\CE$, where 
$\vT_T: T \to T^{\io\th}, t \mapsto t\th(t)\iv$.
Then $\CL$ is a tame local system on $T$.  
For each $T^{\th}$-stable subspace $V_1$ of $V$, 
let $\wt\CE_{V_1}$ be the constructible sheaf on 
$\CX$ which is the extension by 
zero of the local system $\CE \boxtimes \Ql$ on 
$\CX_{T, V_1} = T^{\io\th}\times V_1$.  
We note that
\par\medskip
\begin{equation*}
\tag{3.4.1}
\wh\CX_{\CL} = 
\{ \wt\CE_{V_1}[\dim \CX_{T,V_1}]  \mid \vT_T^*\CE \simeq \CL, 
                 V_1 \subset V : \text{$T^{\th}$-stable} \}.
\end{equation*}
\par\medskip

In fact, in the case where $G = T$, $\wt\CB \times V$ 
coincides with $T \times V$ on which $T^{\th}$ acts by 
the left multiplication on $T$ and by the action of $T^{\th} \simeq T_0$ 
on $V$. 
Similarly, $T^{\th}$ acts on $\wt\CB = T$ by the left multiplication.
Take $A \in \wh\CX_{\CL}$. By definition, $\wt\vT A$ is a perverse constituent
of $\Ch K$ for some 
$K \in \DD_{\CL\boxtimes\CL\iv}^{T^{\th} \times T^{\th}}(T \times V \times T)$.
By (0.4.1), we may assume that $K$ is a $T^{\th} \times T^{\th}$-equivariant 
simple perverse sheaf on $(T \times V) \times T$. Thus $K$ can be written as 
$K = K_1 \boxtimes K_2$, where $K_1$ (resp. $K_2$) is a simple perverse sheaf 
contained in $\DD_{\CL}^{T^{\th}}(T \times V)$ (resp. in 
$\DD_{\CL\iv}^{T^{\th}}(T)$).  
Since $\CL$ is a $T^{\th}$-equivariant sheaf on $T$, $K_2$ coincides with 
$\CL\iv[\dim T]$. Since $K_1 \in \DD_{\CL}^{T^{\th}}(T \times V)$, the support of 
$K_1$ is given by $T \times V_1$ for a $T^{\th}$-stable subspace $V_1$ of $V$.
It follows that $K_1$ is a constructible sheaf, up to shift,  obtained 
by the extension by zero from the local system  $\CL\boxtimes \Ql$ on $T \times V_1$.
By applying $\Ch$ following (3.2.1), we see that 
$\Ch K$ is an extension by zero of the local system 
$\CL \boxtimes \Ql$ on $T \times V_1$, up to shift. 
(3.4.1) follows from this. 
\par
If we replace $\CX$ by $\CX' = T^{\io\th} \times V'$, where 
$V'$ is a $T^{\th}$-stable subspace of $V$, 
$\CX'$ also fits to the definition of (3.1.1), and 
the set of 
character sheaves $\wh \CX'_{\CL}$ is given by all those 
$\wt\CE_{V_1}[\dim \CX_{T,V_1}]$ such that $V_1 \subset V'$.     
The variety $\CX$ in (3.1.1) for the case $G =T$ coincides with 
one of those $\CX'$.   
\par
For $\CX = \CX_{T,V}$, we define a subset $\wh\CX^0$ of $\wh\CX$ by
\begin{equation*}
\tag{3.4.2}
\wh\CX^0 = \{ A \in \wh\CX \mid \supp A = T^{\io\th} \times \{ 0\}\}.
\end{equation*}

\bigskip
\section{Induction}

\para{4.1.}
Induction functors and restrictions functors for character 
sheaves in the case of symmetric spaces are introduced by 
Grojnowski [Gr, Section 2]. Henderson reformulated the induction functors 
in [H1].
In this section, we generalize the induction functors so that it fits 
to our setting.
\par
Let $P = LU_P$ be a $\th$-stable parabolic subgroup of $G$ containing 
$B$, where $L$ is a $\th$-stable Levi subgroup of $P$ containing $T$
and $U_P$ is the unipotent radical of $P$.
Then $L$ is a group of the same type as $G$ given in 3.1.
Let $V_L$ be a $L^{\th}$-module such that 
$\CX_L = L^{\io\th} \times V_L$ gives rise to a similar variety as $\CX$.
We assume that $V_L = V'/V_L^-$, where $V'$ is a $P^{\th}$-stable subspace of 
$V$ and $V_L^-$ is a  $P^{\th}$-stable subspace of $V'$.  Hence the action of 
$L^{\th}$ on $V_L$ can be extended to the action of $P^{\th}$.   
\para{4.2.} 
Put 
\begin{equation*}
\wt\CX^P = \{ (x,v, gP^{\th}) \in \CX \times H/P^{\th}
           \mid g\iv xg \in P^{\io\th}, g\iv v \in V' \}. 
\end{equation*}
We consider a diagram 
\begin{equation*}
\tag{4.2.1}
\begin{CD}
\CX_L  @<\t << H \times P^{\io\th} \times V' 
     @>\s >> \wt\CX^P  @>\r >> \CX,
\end{CD}
\end{equation*}
where $\t$ is the map $(g, p, v) \mapsto (\ol p, \ol v)$
($p \mapsto \ol p$ is the natural projection 
$P^{\io\th} \to L^{\io\th}$, and $\ol v$ is the projection $V' \to V_L$). 
$\s$ is the map $(g,p,v) \mapsto (gpg\iv,  gv, gP^{\th})$, and 
$\r$ is the projection to the first and second factors.  
Note that $\wt\CX^P$ is canonically isomorphic to 
$H \times^{P^{\th}}(P^{\io\th} \times V')$, 
and $\s$ gives rise to a principal $P^{\th}$-bundle.  
Now 
\par\medskip
$H\times L^{\th}$ acts on $\CX_L$ by $(h,\ell) :
         (x,v) \mapsto (\ell x\ell\iv, \ell v)$, 
\par
$H \times P^{\th}$ acts on $H \times P^{\io\th} \times V'$ 
by $(h, p): (g, x, v) \mapsto (hgp\iv, p xp\iv, p v)$,  
\par
$H$ acts on $\wt\CX^P$ by $h: (x,v,gP^{\th}) \mapsto 
               (hxh\iv, hv, hgP^{\th})$.
\par\medskip
\noindent
Here $\s, \r$ are 
$H$-equivariant. 
The action of $L^{\th}$ on $L^{\io\th}$ can be lifted to the 
action of $P^{\th}$ by the projection $P^{\th} \to L^{\th}$.  We consider 
the diagonal action of $P^{\th}$ on $\CX_L = L^{\io\th} \times V_L$. 
Then $\t$ is $H \times P^{\th}$-equivariant. 
We define a functor 
$\Ind = \Ind_{\CX_L,V', P}^{\CX}: 
   \DD^{P^{\th}}(\CX_L) \to \DD^{H}(\CX)$
by 
\begin{equation*}
\tag{4.2.2}
\Ind K = \r_!\s_{\flat}\t^*K[a] \qquad (K \in \DD^{P^{\th}}(\CX_L)),
\end{equation*}
with $a = \dim U_P + \dim V_L^-$. 
(The funtor $\Ind$ depnends on the choice of $V'$ even if $\CX_L, P$ are fixed.
However, we often write it as $\Ind_{\CX_L, P}^{\CX}$ if there is no fear
of confusion.) 
We note that
\begin{align*}
\dim(\text{fibre of }\t) - \dim(\text{fibre of }\s) 
  &=  \dim H + \dim U_P^{\io\th}  + \dim V_L^- - \dim P^{\th} \\
  &=  \dim U_P + \dim V_L^-.
\end{align*}
We also define a functor $\Ind^{\bullet} K = \r_!\s_{\flat}\tau^*K$
by removing the degree shift.  
\par
Let $P \subset Q$ be two $\th$-stable parabolic subgroups 
of $G$ containing $B$, with $\th$-stable Levi subgroups 
$L  \subset M$ containing $T$.  Then $P_M = P \cap M$ is 
a $\th$-stable parabolic subgroup of $M$ with Levi subgroup $L$.
Put $V_M = V'/V_M^-$ for a $Q^{\th}$-subspace $V'$ of $V$, and let 
$V_L$ be the quotient of $V_M'$, where $V_M'$ is a 
a $P_M^{\th}$-stable subspace of $V_M$.  Then $V_M' = V''/V^-_M$ for a
$P^{\th}$-stable subspace $V''$ of $V'$. 
We consider the varieties $\CX_L = L^{\io\th} \times V_L$, 
$\CX_M = M^{\io\th} \times V_M$ as in the case of $\CX$. 
One can define functors 
$\Ind_{\CX_L, V_M', P_M}^{\CX_M} :
         \DD^{P_M^{\th}}(\CX_L) \to \DD^{M^{\th}}(\CX_M)$, 
and $\Ind_{\CX_M, V', Q}^{\CX} : \DD^{Q^{\th}}(\CX_M) \to \DD^H(\CX)$.
Since $V_L$ is a quotient of $V''$, 
$\Ind_{\CX_L, V'', P}^{\CX} : \DD^{P^{\th}}(\CX_L) \to \DD^H(\CX)$
can be defined. 
We obtain the following transitivity of inductions.
The proof is similar to the proof of the transitivity 
of induction for character sheaves [L3, Proposition 4.2], 
and we omit it. 
\begin{prop}  
Let $K \in \DD^{P^{\th}}(\CX_L)$ be such that 
$\Ind_{\CX_L, V_M', P_M}^{\CX_M}K \in \DD^{Q^{\th}}(\CX_M)$. 
Then there is an isomorphism 
\begin{equation*}
{\Ind^{\bullet}}_{\CX_L, V'',P}^{\CX}K
    \simeq ({\Ind^{\bullet}}_{\CX_M,V',  Q}^{\CX}
                \circ {\Ind^{\bullet}}_{\CX_L,V'_M, P_M}^{\CX_M})K.
\end{equation*}
In particular,  a similar transitivity holds for 
$\Ind_{\CX_L,P}^{\CX}$.
\end{prop}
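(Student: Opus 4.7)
The plan is to mimic Lusztig's argument for transitivity of induction of character sheaves [L3, Proposition 4.2], adapted to our exotic symmetric-space setting. Write $\tau, \sigma, \rho$ for the three arrows in the direct-induction diagram for $\Ind^{\bullet}_{\CX_L, V'', P}$, and $\tau_1, \sigma_1, \rho_1$ (resp.\ $\tau_2, \sigma_2, \rho_2$) for those in the $\Ind^{\bullet}_{\CX_L, V_M', P_M}$ diagram over $\CX_M$ (resp.\ the $\Ind^{\bullet}_{\CX_M, V', Q}$ diagram over $\CX$). The strategy is to construct an intermediate space $\CZ$ sitting over both diagrams, so that after appropriate base-change and principal-bundle manipulations the composition $\rho_{2!}\sigma_{2\flat}\tau_2^*\rho_{1!}\sigma_{1\flat}\tau_1^*$ collapses to $\rho_!\sigma_\flat\tau^*$.

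Concretely, I take
\begin{equation*}
\CZ = (H \times Q^{\io\th} \times V') \times_{\CX_M} (M^\th \times P_M^{\io\th} \times V_M'),
\end{equation*}
which carries natural projections to both factors and thus to each of the two induction diagrams. The key identification, which is the technical heart of the proof, is that $\CZ \simeq H \times P^{\io\th} \times V''$ in a manner compatible with all relevant group actions. This rests on two Levi-type decompositions in the involutive setting: the projection $Q^{\io\th} \to M^{\io\th}$ has fibres isomorphic to $U_Q^{\io\th}$, and $P^{\io\th}$ is the preimage of $P_M^{\io\th}$ under this projection; on the $V$-side, $V_M^- \subset V''$ is exactly the kernel of $V'' \twoheadrightarrow V_M'$, so the fibre product of $V'$ and $V_M'$ over $V_M$ recovers $V''$.

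With $\CZ$ in hand, the proof reduces to a sequence of routine isomorphisms: (a) proper base change applied to the cartesian square defining $\CZ$, valid because $\rho_1$ is proper (it comes from the projective morphism $M^\th \times^{P_M^\th}(\cdot) \to M^\th \times (\cdot)$); (b) the fact that $\sigma_\flat$, being the inverse of the equivalence $\DD Y \isom \DD^{P^\th}(X)$ attached to a principal bundle (see [BL, 2.2.5] as recalled in 0.3), commutes with $*$-pullback from the base; and (c) the compatibility of principal-bundle quotients, namely that the bundles $\sigma_1$ and $\sigma_2$ glue, via the identification of the previous paragraph, into the single principal $P^\th$-bundle $\sigma$, and so their $\flat$-functors compose to $\sigma_\flat$. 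Patching these isomorphisms together yields $\rho_!\sigma_\flat\tau^*K$, which is precisely $\Ind^{\bullet}_{\CX_L, V'', P}K$. The statement for $\Ind$ itself (with degree shift) follows by bookkeeping $\dim U_P = \dim U_Q + \dim U_{P_M}$ and the additivity $\dim V_L^- = \dim V_M^- + \dim V_{L,M}^-$, where $V_{L,M}^-$ is the kernel of $V_M' \twoheadrightarrow V_L$.

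The main obstacle is the careful tracking of the $\th$- and $\io\th$-fixed parts that enter the definition of $\CZ$: one must verify that the Levi decomposition of $Q$ descends to a genuine product decomposition of $Q^{\io\th}$ as a $M^\th$-variety, and that restricting along $L \subset M$ is compatible with restricting along the parabolic $P_M \subset M$. Once these Levi decompositions in the exotic-symmetric-space setting are pinned down, by exactly the same arguments used by Grojnowski and Henderson in the symmetric-space case, the rest of the proof is a forward diagram chase that parallels [L3, Proposition 4.2].
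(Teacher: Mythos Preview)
Your proposal is correct and follows essentially the same approach as the paper, which simply states that the proof is similar to Lusztig's transitivity argument [L3, Proposition~4.2] and omits the details. The fibre-product construction of $\CZ$, the proper base change for $\rho_1$, and the gluing of the principal-bundle quotients are exactly the standard ingredients of Lusztig's method adapted to the present setting, so there is nothing further to compare.
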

\para{4.4.} We consider a diagram 
\begin{equation*}
\tag{4.4.1}
\begin{CD}
L \times V_L @<\t'<< H \times P \times V' \times H @>\s'>> 
                       \wt\CZ^P  @>\r'>> G\times V \\
            @V\vT_L VV    @VV\vT_1V      @VV\vT_2V  @VV\vT V  \\
\CX_L  @<\t<<  H \times P^{\io\th} \times V' @>\s>> \wt\CX^P  @>\r>>  \CX,
\end{CD}
\end{equation*}
where $P^{\th} \times P^{\th}$ acts on $H\times P \times V' \times H$
by 
\begin{equation*}
(p_1, p_2) : (g_1, x, v, g_2) \mapsto 
     (g_1p_1\iv, p_1xp_2\iv, p_1 v,  p_2g_2),
\end{equation*} 
and $\wt\CZ^P = H \times^{P^{\th}}(P \times V')\times^{P^{\th}}H$ 
is the quotient of $H \times P \times V' \times H$ 
by $P^{\th} \times P^{\th}$.  Here the maps are defined by 

\par\medskip
$\t': (g_1, x, v, g_2) \mapsto (\ol x, \ol v)$, where $\ol x$ is the image 
of $x$ under $P \to L$,
\par
$\s'$ : the quotient map by $P^{\th} \times P^{\th}$,
\par
$\r' : g_1*(x,v)*g_2 \mapsto (g_1xg_2, g_1v)$, 
\par
$\vT_1: (g_1,x,v,g_2) \mapsto (g_1, x\th(x)\iv, v)$, 
\par
$\vT_2: g_1*(x,v)*g_2 \mapsto 
    g_1*(x\th(x)\iv, v)$,
\par\medskip
\noindent
where we denote by $g*(x,v) \in \wt\CX^P$ the image of 
$(g,x,v) \in H \times P^{\io\th} \times V'$ 
by the quotient by $P^{\th}$, and we use a similar notation 
also for the quotient 
of $H \times P \times V' \times H$ 
by $P^{\th} \times P^{\th}$. 
\par
Now $H \times H$ 
acts on $H \times P \times V' \times H$ by 
$(h_1, h_2) : 
(g_1, x, v, g_2) \mapsto 
   (h_1g_1, x, v, g_2h_2\iv)$,
and this action commutes with the action of $P^{\th} \times P^{\th}$. 
The action of $H \times H$ on $\wt\CZ^P$ is defined similarly.  
Then $\s', \r'$ are 
$H\times H$-equivariant, 
and $\t'$ is $(H \times H) \times (P^{\th}\times P^{\th})$-equivariant
($L^{\th} \times L^{\th}$ acts on $L$ as in the case 
of $G$, which can be lifted to the action of $P^{\th} \times P^{\th}$ by 
the projection $P^{\th} \to L^{\th}$, and the action of $L^{\th}$ on 
$V_L$ can be lifted to the action of $P^{\th}$ as in 4.2),  
and $H \times H$ acts trivially on $L \times V_L$. 
Then $\vT_L, \vT_1, \vT_2, \vT$ are compatible 
with those actions on the upper row and the previous actions 
on the lower row. 
\par 
Since $\s'$ is a principal $(P^{\th} \times P^{\th})$-bundle,
one can define in a similar way as the functor $\Ind$, a functor 
$\wt\Ind = \wt\Ind_{L \times V_L, P}^{G \times V}
  : \DD^{P^{\th} \times P^{\th}}(L \times V_L) \to 
       \DD^{H \times H}(G \times V)$ by 
\begin{equation*}
\tag{4.4.2}
\wt\Ind \, K = \r'_!\s'_{\flat}(\t')^*K[a'] \qquad 
     (K \in  \DD^{P^{\th} \times P^{\th}}(L \times V_L)),
\end{equation*}
where 
$a' = 2(\dim H - \dim P^{\th}) + \dim U_P + \dim V_L^-$.
Here we have
\begin{equation*}
\begin{split}
\dim (&\text{fibre of }  \t') - \dim (\text{fibre of }\s')  \\ 
          &= 2(\dim H - \dim P^{\th}) + \dim U_P + \dim V_L^-.   
\end{split}
\end{equation*}
\par
One can check that the rightmost square in the diagram (4.4.1)
is a cartesian square.  Since $\s, \s'$ are both principal
bundles, and $\vT_1, \vT_2$ are smooth with connected fibre, 
we have an isomorphism of functors
\begin{equation*}
\tag{4.4.3}
\wt\Ind\circ \wt\vT_L \simeq \wt\vT \circ \Ind. 
\end{equation*}

\para{4.5.}
Put $B_L = B\cap L, U_L = U \cap L$. Then $B_L$ is a $\th$-stable
Borel subgroup of $L$, and $U_L$ is the unipotent radical of $B_L$
such that $B_L = TU_L$.   Put $\CB_L = L/B_L$ and  
$\wt\CB_L = L/U_L$. 
The action of $L$ on $\wt\CB_L$ can be lifted to the action of 
$P$ by the projection $P \to P/U_P \simeq L$.  Hence we have actions 
of $P^{\th}$ on $\wt\CB_L$ and on $\wt\CB_L \times V'$. 
We consider a diagram
\begin{equation*}
\tag{4.5.1}
\begin{CD}
(\wt\CB_L \times V_L) \times \wt\CB_L  @<\t''<<
   (H\times \wt\CB_L \times V') \times (H \times \wt\CB_L) @>\s''>> 
       \wt\ZC_L @>\r''>> (\wt\CB \times V) \times \wt\CB,
\end{CD}
\end{equation*}
where $\wt\ZC_L = (H \times^{P^{\th}}(\wt\CB_L \times V')) \times
                   (H \times^{P^{\th}}\wt\CB_L)$  
with the action of $P^{\th} \times P^{\th}$, 
\begin{equation*}
(p_1, p_2) : ((g_1, \ell_1 U_L,v), (g_2, \ell_2U_L)) 
      \mapsto ((g_1p_1\iv, p_1\ell_1U_L, p_1v), (g_2p_2\iv,p_2\ell_2U_L))  
\end{equation*}
and $\s''$ is the quotient map by $P^{\th} \times P^{\th}$,
$\t''$ is the projection on the factors except $H \times H$, 
$\r''$ is defined by 
\begin{equation*}
(g_1*(\ell_1U_L,v), g_2*\ell_2U_L) \mapsto (g_1\ell_1U, g_1v, g_2\ell_2U). 
\end{equation*}
Then $\s''$ is a $(P^{\th} \times P^{\th})$-principal bundle,  
$\t''$ is 
$(H \times H) \times (P^{\th} \times P^{\th})$-equivariant, 
and $\s'', \r''$ are $H \times H$-equivariant with respect to the 
obvious actions of $L^{\th} \times L^{\th}$ and of $H \times H$.
Moreover, all the maps in the diagram are $(T \times T)$- equivariant
with respect to the actions induced from the action of $T$ on $\wt\CB_L$ 
and $\wt\CB$.  Thus as in the previous cases, one can define a functor
$I: \DD^{P^{\th} \times P^{\th}}_{\CL\boxtimes\CL\iv}
(\wt\CB_L \times V_L \times \wt\CB_L) \to 
   \DD^{H \times H}_{\CL\boxtimes\CL\iv}(\wt\CB \times V\times \wt\CB)$
by
\begin{equation*}
\tag{4.5.2}
I(K) = \r''_!\s''_{\flat}(\t'')^*K[a''], \qquad 
              (K \in  \DD^{P^{\th} \times P^{\th}}_{\CL\boxtimes\CL\iv}
(\wt\CB_L \times V_L \times \wt\CB_L)), 
\end{equation*} 
where $a'' = a' - \dim U_P$.  Note that 
in this case, 
$\dim(\text{fibre of }\t'') - \dim(\text{fibre of }\s'')
           = a''$.
\par
We define a functor 
$\Ch_L: \DD_{\CL\boxtimes\CL\iv}^{L^{\th} \times L^{\th}}
           (\wt\CB_L \times V_L \times \wt\CB_L) 
             \to \DD^{L^{\th}\times L^{\th}}(L \times V_L)$ 
similar to $\Ch$.  
Then $\Ch_L$ induces a functor 
$\DD_{\CL\boxtimes\CL\iv}^{P^{\th} \times P^{\th}}
           (\wt\CB_L \times V_L \times \wt\CB_L) 
             \to \DD^{P^{\th}\times P^{\th}}(L \times V_L)$
in an obvious way, which we denote by the same symbol $\Ch_L$.

\begin{prop} 
Under the notation above, we have 
an isomorphism of functors
\begin{equation*}
\Ch\circ I \simeq \wt\Ind\circ \Ch_L.
\end{equation*}
\end{prop}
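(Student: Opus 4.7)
The plan is to exhibit both $\Ch \circ I(K)$ and $\wt\Ind \circ \Ch_L(K)$ as the derived pushforward of a single complex on a common large intermediate variety, by assembling the diagrams (3.2.1), (4.4.1) and (4.5.1) together with the analogue of (3.2.1) for $L$ into one big commutative diagram, and then applying proper base change for the $(-)_!$ operations together with the descent equivalence $(-)_{\flat}$ along the principal $T$- and $(P^{\th}\times P^{\th})$-bundles involved.

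First I would expand both sides explicitly. Writing $p^L, q^L, r^L$ for the $L$-analogues of $p, q, r$, we have
\begin{align*}
\Ch\circ I(K) &\simeq p_!\,q_{\flat}\,r^*\,\r''_!\,\s''_{\flat}\,(\t'')^* K \,[\,a''+\dim U-\dim T\,], \\
\wt\Ind\circ \Ch_L(K) &\simeq \r'_!\,\s'_{\flat}\,(\t')^*\,p^L_!\,q^L_{\flat}\,(r^L)^* K \,[\,a'+\dim U_L-\dim T\,].
\end{align*}
The two degree shifts agree because $a''=a'-\dim U_P$ and $\dim U = \dim U_L + \dim U_P$. Next, I would construct a single variety $\CW$ sitting over both sides of the comparison. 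Concretely, on the $\Ch\circ I$-side the fibre product $(G\times V\times \wt\CB)\times_{(\wt\CB\times V\times\wt\CB)}\wt\ZC_L$ (pulling $\r''$ back along $r$) should coincide, up to passage to $P^{\th}\times P^{\th}$-quotients, with the fibre product $(H\times P\times V'\times H)\times_{L\times V_L}(L\times V_L\times \wt\CB_L)$ appearing on the $\wt\Ind\circ \Ch_L$-side after pulling $p^L$ back along $\t'$. Both descriptions should be identified with essentially the same space $H\times^{P^{\th}}(P\times V'\times\wt\CB_L)\times^{P^{\th}}(H\times\wt\CB_L)$, together with its canonical maps to $G\times V$ and to $\wt\CB_L\times V_L\times\wt\CB_L$. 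I would arrange all of this into one large commutative diagram, with the two chains of arrows defining $\Ch\circ I$ and $\wt\Ind\circ \Ch_L$ along its boundary.

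The main step is then a diagram chase. On the $\Ch\circ I$-side, I would first move $r^*$ past $\r''_!$ by proper base change, using that $\r''$ is proper (it is built from projections $H\times^{P^{\th}}(\cdots)\to H/P^{\th}\times\cdots$ and a map into $\wt\CB$ induced by $P\to G$, which is proper on the quotient because $H/P^{\th}$ is projective). I would then transport $q_{\flat}$ across the pulled-back $\s''$ using the descent equivalence of 0.3 for principal bundles, and exchange the resulting $(-)_{\flat}$ with the pulled-back $p^L_!$ by another application of proper base change. Recognising the inner composition on the remaining triangle as $(\t')^*p^L_!q^L_{\flat}(r^L)^*$ then yields the desired isomorphism. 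The hard part will be organising the large diagram so that each square used in base change is genuinely cartesian in the $T$- and $P^{\th}\times P^{\th}$-equivariant settings, so that the twisted complexes (objects of $\DD_{\CL\boxtimes\CL\iv}$) descend correctly under every use of the $\flat$-functor; in practice this amounts to verifying several transitivity-of-quotient statements for the principal bundles appearing, and carefully bookkeeping the equivariant structures through each base-change step.
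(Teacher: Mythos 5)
Your proposal is correct and follows essentially the same route as the paper: one assembles (3.2.1), its $L$-analogue, (4.4.1) and (4.5.1) into a single large commutative diagram, checks that the relevant squares are cartesian, matches the degree shifts via $a''+(\dim U-\dim T)=a'+(\dim U_L-\dim T)$, and concludes by base change for $(-)_!$ and descent along the principal $T$- and $P^{\th}\times P^{\th}$-bundles. The only small correction is that the common intermediate space is $H\times^{P^{\th}}(P\times V')\times^{P^{\th}}(H\times\wt\CB_L)$ (the paper's $\wt\ZC^P_L$): the extra $\wt\CB_L$-coordinate in your middle factor is redundant, being recovered from the other data as $\ol x\,\ell U_L$, and including it as an independent factor would introduce spurious fibre cohomology.
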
 

\begin{proof}
The degree shift for the functor $\Ch$ is given 
by $b = \dim U - \dim T$, and that for $\Ch_L$ is given 
by $b_L = \dim U_L - \dim T$.  Since 
$a'' + b = a' + b_L$, 
we may ignore the degree shift for proving the formula. 
We have the following commutative diagram
\begin{equation*}
\begin{CD}
(\wt\CB_L \times V_L) \times \wt\CB_L @<\t''<< 
      (H \times \wt\CB_L \times V') \times (H \times \wt\CB_L)  @>\s''>>
     \wt\ZC_L @>\r''>> (\wt\CB \times V) \times \wt\CB \\
      @Ar_LAA    @AAr_1A     @AAr_2A   @AA r A  \\
(L \times V_L)\times \wt\CB_L @<\t_2<<
       (H \times P \times V') \times (H \times \wt\CB_L)  @>\s_2>>
             \wt\ZC^P_L @>\r_2>> (G \times V) \times \wt\CB \\
@Vq_LVV    @VVq_1V    @VVq_2V   @VV q V  \\
(L \times V_L) \times \CB_L @<\t_1<< 
     (H\times P \times V') \times (H \times \CB_L) @>\s_1>>
              \wt\CZ^P_L @>\r_1>>  (G \times V) \times \CB \\
@Vp_LVV   @VVp_1V    @VVp_2V    @VV p V    \\
(L \times V_L) @<\t'<<  (H \times P \times V') \times H @>\s'>>
         \wt\CZ^P  @>\r'>>  G\times V. 
\end{CD}
\end{equation*}
Here $P^{\th} \times P^{\th}$ acts on 
$(H \times P \times V') \times (H \times  \wt\CB_L)$ by
\begin{equation*}
(p_1, p_2): ((g_1,x,v),(g_2,\ell U_L)) \mapsto 
              ((g_1p_1\iv, p_1xp_2\iv, p_1v), (p_2g_2, p_2\ell U_L)),   
\end{equation*}
and $\wt\ZC^P_L = H \times^{P^{\th}}(P \times V') 
       \times^{P^{\th}}(H \times \wt\CB_L)$ is the quotient 
of $(H \times P \times V') \times (H \times \wt\CB_L)$ 
by $P^{\th} \times P^{\th}$,  
and $\wt\CZ^P_L = H \times^{P^{\th}}(P \times V')
                   \times^{P^{\th}}(H \times \CB_L)$
is defined similarly.
The first row is the diagram (4.5.1), the fourth row is 
the upper row of the diagram (4.4.1).  The fourth column 
is the diagram (3.2.1), and the first column is the diagram 
(3.2.1) defined by replacing $G$ by $L$. The map $r_1$ is defined 
by 
\begin{equation*}
r_1: ((g_1, x,v), (g_2,\ell U_L)) \mapsto 
          ((g_1, \ol x\ell U_L,v), (g_2\iv, \ell U_L)).
\end{equation*}
$r_1$ is $(P^{\th} \times P^{\th})$-equivariant 
with respect to the above action on 
$(H \times P \times V') \times (H \times \wt\CB_L)$ and the 
action on $(H \times \wt\CB_L \times V') \times (H \times \wt\CB_L)$
defined in 4.4.  Hence it induces the map $r_2: \wt\ZC_L^P \to \wt\ZC_L$. 
The maps $\s_1,\s_2$ are the quotient maps by $P^{\th} \times P^{\th}$.
The map $\r_2$ is defined by 
\begin{equation*}
\r_2 : g_1*(x, v)*(g_2,\ell U_L) 
              \mapsto (g_1xg_2, g_1v, g_2\iv\ell U),
\end{equation*}
and $\r_1$ is defined similarly. 
The maps $q_1, q_2$ are principal $T$-bundles, and the maps
$p_1, p_2$ are obvious projections. 
One can check that the bottom left, bottom center, center right, 
and top right squares are cartesian squares. 
Then the proposition follows from the standard diagram chase.
\end{proof}

\para{4.7.}
In the rest of this section, we assume that $\CX$ is of pure exotic type 
or of enhanced type.  
We consider the varieties $\CX_L = L^{\io\th} \times V_L$ as in the following 
special type. In the pure exotic case, take $L$ such that 
$L^{\th} \simeq Sp_{2n'} \times (GL_1)^{n-n'}$.  
Let $V = V_L^- \oplus V_L^+ \oplus (V_L'\oplus V_L'')$ 
be a decomposition of $V$ into $L^{\th}$-stable subsapces, 
where $V_L^+$ is a subspace with $\dim V_L^+ = 2n'$ corresponding to $Sp_{2n'}$, 
and $V_L^-, V_L'\oplus V_L''$ are subspaces 
with $\dim V^-_L = \dim V'_L \oplus V_L''= n-n'$ corresponding to
$(GL_1)^{n-n'}$. We assume that
$V' = V^-_L \oplus V_L^+ \oplus V'_L$ and $V^-_L$ are $P^{\th}$-stable.
Put $V_L = V'/V^-_L$.  
\par
In the enhanced case, we take $L$ such that 
$L^{\th} \simeq GL_{n'} \times (GL_1)^{n-n'}$.  
Let $V = V_L^-\oplus V_L^+ \oplus (V_L' \oplus V_L'')$ be a decomposition of $V$ into
$L^{\th}$-modules, where $V_L^+$ is a subspace with $\dim V_L^+ = n'$ 
corresponding to $GL_{n'}$, and $V_L^- \oplus V_L' \oplus V_L''$ is a subspace with 
$\dim V_L^- + \dim V_L' + \dim V_L'' = n-n'$ corresponding to $(GL_1)^{n-n'}$.
We assume that $V' = V_L^- \oplus V_L^+ \oplus V_L'$ and $V_L^-$ are $P^{\th}$-stable.
Put $V_L = V'/V_L^-$.  
\par 
We denote by $\CS(\CX)$ the set of such $\CX_L$ for a given $\CX$. 
In either case, $V_L^+$ is regarded as a subspace of $V_L$.
If $V_L' = \{0\}$, 
then $V_L \simeq V_L^+$ and the induced action of $P^{\th}$ on $V_L$ coinicides 
with the lift of the $L^{\th}$-action on $V_L^+$, namely $U_P^{\th}$ acts trivially 
on $V_L$.
We define $\wh\CX_L^0$ as the set of $A \in \wh\CX_L$ 
such that the support of $A$ is contained in $L^{\io\th} \times V^+_L$.
In the case of $L = T$, $\wh\CX^0_L$ coincides with the previous 
definition in (3.4.2). 
Note that if $A \in \wh\CX_L^0$, $A$ turns out to be $P^{\th}$-equivariant. 
We prove the following proposition. 

\begin{prop}  
Assume that $\CX_L \in \CS(\CX)$.  Then
for any $A \in (\wh\CX_L)^0_{\CL}$,  $\Ind A$ is a semisimple complex 
such that each direct summand is contained in $\wh\CX_{\CL}$ up to shift.  
\end{prop}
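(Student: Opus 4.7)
The plan is to deduce the proposition by combining Proposition 4.6 (the compatibility $\Ch\circ I\simeq\wt\Ind\circ\Ch_L$) with the base--change isomorphism (4.4.3), $\wt\vT\circ\Ind\simeq\wt\Ind\circ\wt\vT_L$.  Together these reduce the character-sheaf property for simple summands of $\Ind A$ on $\CX$ to the assumption that $A\in\wh\CX_L$.

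First I would verify that $\Ind A$ is a semisimple complex.  Since $A\in(\wh\CX_L)^0_\CL$ is supported on $L^{\io\th}\times V_L^+$, it is $P^{\th}$-equivariant by the remark made in 4.7, so the induction functor $\Ind$ makes sense on it.  In the diagram (4.2.1), $\t$ is smooth with connected fibres of dimension $\dim H+\dim U_P^{\io\th}+\dim V_L^-$, and $\s$ is a principal $P^{\th}$-bundle, so $\s_\flat\t^*A$ is, up to shift, a simple perverse sheaf on $\wt\CX^P$.  It is moreover pure, because $A$ is a simple character sheaf built from a tame local system on $T$.  The map $\r:\wt\CX^P\to\CX$ is proper, since $\wt\CX^P$ is closed in $\CX\times H/P^{\th}$ and the projection $\CX\times H/P^{\th}\to\CX$ is proper.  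The decomposition theorem of Beilinson--Bernstein--Deligne then gives that $\Ind A=\r_!\s_\flat\t^*A[a]$ is a semisimple complex.

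Next I would prove that every simple direct summand $A'$ of $\Ind A$ lies in $\wh\CX_\CL$.  By definition of $\wh\CX_L$, there exists some $K_L\in\DD^{L^{\th}\times L^{\th}}_{\CL\boxtimes\CL\iv}(\wt\CB_L\times V_L\times\wt\CB_L)$ such that $\wt\vT_L A$ is a perverse constituent of $\Ch_L K_L$.  Applying (0.4.1) to the triangulated functor $\Ch_L$, I may replace $K_L$ by one of its simple perverse constituents; lifting the equivariance along $P^{\th}\to L^{\th}$, I may then assume that $K_L$ is a $P^{\th}\times P^{\th}$-equivariant simple perverse sheaf.  Since simple perverse sheaves in $\CM_{\CL\boxtimes\CL\iv}$ on $\wt\CB_L\times V_L\times\wt\CB_L$ come from tame local systems on $T$ and are therefore pure, BBD applied to the proper pushforward implicit in $\Ch_L$ shows that $\Ch_L K_L$ is itself semisimple, so now $\wt\vT_L A$ appears as a genuine direct summand of $\Ch_L K_L$ up to shift.

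Proposition 4.6 then gives $\Ch(IK_L)\simeq\wt\Ind(\Ch_L K_L)$.  Because $\wt\Ind$ is triangulated and, by a further application of BBD to the proper map $\r'$ in (4.4.1), it sends a simple pure perverse sheaf to a semisimple complex, we conclude that $\wt\Ind(\wt\vT_L A)$ is a direct summand of $\Ch(IK_L)$ up to shift.  Combining this with (4.4.3) gives $\wt\vT(\Ind A)\simeq\wt\Ind(\wt\vT_L A)$, so the simple summand $\wt\vT A'$ of $\wt\vT(\Ind A)$ is a direct summand, hence a perverse constituent, of $\Ch(IK_L)$.  This shows $A'\in\wh\CX_\CL$, as required.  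The main obstacle in the above argument is the repeated use of purity and the decomposition theorem to pass from ``perverse constituent'' to ``direct summand up to shift'' at each stage of $\Ch_L$, $\wt\Ind$ and $\Ind$; once those semisimplicity statements are secured, the proof is essentially a diagram chase through Proposition 4.6 and (4.4.3).
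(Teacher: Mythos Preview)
Your approach is essentially the paper's: combine Proposition~4.6 with (4.4.3), reduce to a simple $K$ via (0.4.1), and invoke the decomposition theorem for the proper maps $p_L$ and $\r$. One step, however, needs more care. Your ``lifting the equivariance along $P^{\th}\to L^{\th}$'' does not in general produce the $P^{\th}\times P^{\th}$-equivariance required by the functor $I$ in Proposition~4.6. That functor uses the $P^{\th}$-action on $V_L=V'/V_L^-$ \emph{induced from the genuine $P^{\th}$-action on $V'$} (this is what makes $\t''$ in diagram~(4.5.1) equivariant), and this quotient action does not factor through $L^{\th}$ unless $U_P^{\th}$ acts trivially on $V_L$. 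The paper fixes this by using the hypothesis $A\in(\wh\CX_L)^0_{\CL}$ a second time: since $\supp A\subset L^{\io\th}\times V_L^+$, one may choose $K$ supported on $\wt\CB_L\times V_L^+\times\wt\CB_L$, and on $V_L^+$ the two $P^{\th}$-actions coincide (this is exactly the remark at the end of~4.7). You invoke the $(\wh\CX_L)^0$ condition only for the $P^{\th}$-equivariance of $A$ itself, but it is needed again at this point.

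A minor remark: the repeated appeals to purity are unnecessary and slightly obscure the argument. The paper simply notes that $p_L$ and $\r$ are proper and applies the decomposition theorem; and once $\wt\vT_L A$ is a direct summand of $\Ch_L(K)$ up to shift, the fact that $\wt\Ind(\wt\vT_L A)$ is a direct summand of $\wt\Ind(\Ch_L K)$ follows just from additivity of $\wt\Ind$, without any further use of BBD on $\r'$.
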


\begin{proof}
Take $A \in (\wh\CX_L)^0_{\CL}$.  
By definition, 
$\wt\vT_L A$ is a perverse constituent of $\Ch_L(K)$ for 
some $K \in \DD^{L^{\th} \times L^{\th}}_{\CL\boxtimes\CL\iv}
               (\wt\CB_L \times V_L \times \wt\CB_L)$.
By our assumption, we may assume that 
$K \in \DD_{\CL\boxtimes\CL\iv}^{L^{\th} \times L^{\th}}
 (\wt\CB_L \times V^+_L \times \wt\CB_L)$.  Hence $K$ turns out 
to be $P^{\th} \times P^{\th}$-equivariant, 
and so $\Ch_L(K)$ is
$P^{\th} \times P^{\th}$-equivariant.  By (0.4.1), 
we may further assume that $K$ is a simple perverse sheaf.  
Since $p_L$ is proper, $\Ch_L(K)$ is a semisimple complex 
by the decomposition theorem, and 
$\wt\vT_LA$ is a direct summand of $\Ch_L(K)$, up to shift. 
Since an additive functor on additive categories preserves 
the direct sum decompositon, 
we see that $\wt\Ind\circ\wt\vT_L A$ is a direct summand of 
$\wt\Ind\circ\Ch_L(K)$, up to shift. 
This implies that  
if $A_1$ is a perverse constituent 
of $\wt\Ind\circ\wt\vT_L A$, then $A_1$ is a perverse
constituent of $\wt\Ind \circ \Ch_L(K)$.  
By Proposition 4.6, we have $\wt\Ind\circ\Ch_L(K) \simeq \Ch\circ I(K)$.
Hence $A_1$ is a perverse constituent of 
$\Ch(K_1)$ with $K_1 = I(K) \in 
    \DD^{H\times H}_{\CL\boxtimes\CL\iv}(\wt\CB \times V \times \wt\CB)$.
\par
On the other hand,  
by (4.4.3) we have $\wt\vT\circ \Ind A \simeq \wt\Ind\circ \wt\vT_L A$.
Hence 
$\wt\vT\circ \Ind A$ is a semisimple complex whose simple objects 
are of the form $A_1 = \wt\vT A'$ for a simple perverse sheaf $A'$ 
contained in $\Ind A$ up to shift. 
Then the above argument shows that $A' \in \wh\CX_{\CL}$. 
The semisimplicity of $\Ind A$ is clear from the decomposition theorem 
since $\r$ in (4.2.1) is proper.
This proves the proposition. 
\end{proof}

\para{4.9.} We consider the case where $L = T$.  We follow 
the notation in 3.4.
A character sheaf $A$ on $\CX_T$ can be written as 
$A = \wt\CE_{V_1}$ with $V_1 \subset V$. 
If $V_1$ is $B^{\th}$-stable, then $A$ turns out to be 
$B^{\th}$-equivariant, and so $\Ind_{\CX_T,B}^{\CX}A$ can be defined. 
We denote by $\wh\CX^{\ps}$ the set of character sheaves in 
$\wh\CX$ appearing in the perverse constituents 
of $\Ind_{\CX_T,B}^{\CX}A$ for various $A = \wt\CE_{V_1}$ 
($V_1$ : $B^{\th}$-stable), and call 
them principal series character sheaves.   
Clearly, the determination of $\wh\CX^{\ps}$ is reduced to the case 
where $\CX$ is of pure exotic type or of enhanced type. 
We shall 
describe $\wh\CX^{\ps}$ in each case. 
\par
First assume that $\CX = G^{\io\th} \times V$ is 
of pure exotic type with $\dim V = 2n$.
(We also consider the case where $\CX = G^{\io\th}$, in which 
case we use the notation $\wh G^{\io\th}_{\CL}, (\wh G^{\io\th}_{\CL})^{\ps}$, 
etc.)
We consider $\CX_T = \CX_{T,V_T}= T^{\io\th} \times V_T$, 
where $V_T = M_{m'}/M_n$ for $m' \ge n$.  
Consider the diagram (4.2.1) for the case $P = B$, 

\begin{equation*}
\begin{CD}
\CX_T @<\t<< H \times B^{\io\th} \times M_{m'} @>\s>> \wt\CX^B @>\r>> \CX.
\end{CD}
\end{equation*}

We consider the character sheaf 
$A = \wt\CE_{V_1}[\dim \CX_{T,V_1}]$ for a $B^{\th}$-stable subspace 
$V_1 = M_{m}/M_n$ for some $n \le m \le m'$. 
Let $\wt\CX_m$ be as in 1.1.  Then $\wt\CX_m$ is a closed subvariety of 
$\wt\CX^B$. We see that
the support of $\s_{\flat}\t^*A$ is contained in $\wt\CX_m$, 
and $\s_{\flat}\t^*A$ coincides with 
$(\a^{(m)})^*\CE[\dim \CX_{T,V_1}]$ under the notation in 1.1.
In this case, $a = \dim U + n = \dim \wt\CX_m - m$.
Since $\dim\wt\CX_m = \dim \CX_m + (m -n)$,
we have
\begin{equation*}
\dim \CX_{T,V_1} + a = \dim \CX_m + (m -n).
\end{equation*}
Hence we see that $\Ind_{\CX_T,B}^{\CX}A \simeq K_{m, T, \CE}[m -n]$ 
under the notation in Theorem 1.11.  Note that $m = n$ if $A \in \wh\CX_T^0$. 
$K_{m,T, \CE}$ is a semisimple complex on $\CX$, and 
the decomposition of $K_{m, T, \CE}$ into
simple constituents was given in Theorem 1.11. 
In the case where $m = n$, $K_{n, T,\CE} = K_{T,\CE}$ is a semisimple
perverse sheaf by Theorem 4.2 in [SS].
In particular, 
by comparing Theorem 1.11 with Theorem 4.2 in [SS], we see 
that any simple component appearing in $K_{m,T,\CE}$, up to shift, 
is already contained in $K_{T,\CE}$.  
It follows that
\par\medskip\noindent
(4.9.1) \ Assume that $\CX$ is of pure exotic type.  
For any $B^{\th}$-stable $A \in \wh\CX_T$,  
$\Ind_{\CX_T,B}^{\CX}A$ is a semisimple complex in 
$\DD^H(\CX)$.  If $A \in \wh\CX_T^0$, 
then $\Ind_{\CX_T,B}^{\CX}A \in \CM(\CX)$. 
Moreover 
$\wh\CX_{\CL}^{\ps}$ coincides with the union of simple constituents
of $K_{T,\CE}$ for various $\CE$ such that $\vT^*_T\CE = \CL$, namely
\begin{equation*}
\wh\CX^{\ps}_{\CL} = \{ \IC(\CX_m, \CL_{\r})[\dim \CX_m]
   \mid \r \in \CW_{\Bm,\CE}\wg, 0 \le m \le n, \vT^*_T\CE \simeq \CL\}
\end{equation*}     
under the notation in Theorem 1.5.
Moreover the set $(\wh G^{\io\th}_{\CL})^{\ps}$ coincides with 
the subset of $\wh\CX_{\CL}^{\ps}$ consisting of those 
$\IC(\CX_m,\CL_{\r})[\dim \CX_m]$ such that $m = 0$.

\par\medskip
Next assume that $\CX$ is of enhanced type, namely, 
$\CX = G^{\io\th} \times V$, where $G = GL(V) \times GL(V)$  with $\dim V = n$.
(We also consider the case where $\CX = G^{\io\th}$, in which case, 
we use the notation $\wh G_{\CL}, \wh G_{\CL}^{\ps}$, etc.).
Let $\CX_T = T^{\io\th} \times V_T$ with $V_T = M_{m''}/M_{m'}$ for $m'' \ge m'$.
A $B^{\th}$-stable character sheaf on $\CX_T$ is given as $A = \wt\CE_{V_1}[\dim \CX_{T,V_1}]$
with $V_1 = M_m/M_{m'}$ with $m' \le m \le m''$. 
Let $\wt\CX_m$ be the variety defined in 2.6.  
Then 
$\wt\CX_m$ is a closed subvariety of $\wt\CX^B$.  
Since $\dim \wt\CX_m = \dim \CX_m$ by 2.6, we have
\begin{equation*}
\dim \CX_{T,V_1} + a = \dim \wt\CX_m  = \dim \CX_m .
\end{equation*}
Let $K_{m,T,\CE}$ be the complex on $\CX_m$ defined in 2.8.
By a similar 
argument as before, we see that 
$\Ind_{\CX_T}^{\CX}A \simeq K_{m,T,\CE}$. 
Note that if $m' = m''$, we have $\CX_T = T^{\io\th} \times \{0\}$.
Thus for each $0 \le m \le n$, $K_{m,T,\CE}$ is 
realized as $\Ind_{\CX_T, V',B}^{\CX}A_0$ for $A_0 \in \wh\CX_T^0$ by choosing 
$V'$ so that $m' = m'' = m$.  
Now by Theorem 2.9, $K_{m,T,\CE}$ is a semisimple perverse sheaf, 
and its simple components are completely described there.  Thus we have
\par\medskip\noindent
(4.9.2) \ Assume that $\CX$ is of enhanced type.  Then 
for any $B^{\th}$-stable $A \in \wh\CX_T$, $\Ind_{\CX_T,B}^{\CX}A$ is a semisimple 
perverse sheaf on $\CX$.  
Under the notation of Theorem 2.9, we have 
\begin{equation*}
\wh\CX^{\ps}_{\CL} = \{ \IC(\CX_m, \CL_{\r})[\dim \CX_m]
  \mid \r \in W_{\Bm,\CE}\wg, 0 \le m \le n, \vT_T^*\CE \simeq \CL\}.   
\end{equation*}
Any element in $\wh\CX^{\ps}_{\CL}$ is obtained as a simple component of 
some $\Ind_{\CX_T}^{\CX}A_0$ for $A_0 \in \wh\CX_T^0$. 
Moreover, (as is well-known) the set $\wh G_{\CL}^{\ps}$ is given as 
the subset of $\wh\CX_{\CL}^{\ps}$ consisting of those 
$\IC(\CX_m, \CL_{\r})[\dim \CX_m]$ with $m = 0$.
\par

\remark{4.10.}
In the pure exotic case, the set of character sheaves 
$\wh\CX$ was defined in [SS, 4.1], [SS2, 6.4].  In fact, the set
$\wh\CX$ given there is nothing but $\wh\CX^{\ps}$ in the present
notation.   Later in Section 6, we prove that those two definitions 
coincide with each other, namely, we show that $\wh\CX = \wh\CX^{\ps}$.      
In the enhanced case, $\wh\CX^{\ps}$ coincides with a subset of mirabolic
character sheaves introduced in [FGT, Conjecture 1].

\par\bigskip
\section{Restriction}

\para{5.1.}
In this section we shall construct the restriction functors, 
and show the main properties based on the idea of [Gr, Section 2].
Here we follow the notation in 4.1, i.e., $V_L = V'/V_L^-$ for 
$P^{\th}$-stable subspaces $V', V_L^-$.
Let $j : P^{\io\th} \times V' \hra \CX$  be the inclusion map, 
and $\pi_P: P^{\io\th} \times V' \to \CX_L$ be the projection 
$(p,v) \mapsto (\ol p, \ol v)$.  Then $j$ and $\pi_P$ are 
$P^{\th}$-equivariant with respect to the obvious actions of $P^{\th}$.
We define a restriction functor 
$\Res = \Res_{\CX_L,V', P}^{\CX}:
    \DD^H(\CX) \to \DD^{P^{\th}}(\CX_L)$ by 
\begin{equation*}
\tag{5.1.1}
\Res K = (\pi_P)_!j^*K \qquad (K \in \DD^H(\CX)).
\end{equation*}
\par
We consider $\th$-stable parabolic subgroups $P \subset Q$ of $G$.
We follow the notation in 4.2.  In particular, $V_M = V'/V_M^-$ for a $Q^{\th}$-subspace
$V'$ of $V$,  $V_L$ is a quotient of $V'_M$ for a $P_M^{\th}$-stable subspace $V_M'$ of $V_M$, and   
$V_M' = V''/V_M^-$ for a $P^{\th}$-stable subspace $V''$ of $V'$. Then $V_L$ is a quotient 
of $V''$ as $P^{\th}$-module.  The following transitivity holds. 

\begin{prop}  
Let $K \in \DD^H(\CX)$.  Then there is an isomorphism  in $\DD^{P^{\th}}(\CX_L)$
\begin{equation*}
\Res^{\CX}_{\CX_L,V'',P}K \simeq (\Res_{\CX_L,V'_M,P_M}^{\CX_M}\circ \Res_{\CX_M,V',Q}^{\CX})K.
\end{equation*}
\end{prop}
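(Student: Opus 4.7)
The plan is to exhibit a cartesian square and apply proper base change. Write the relevant inclusions and projections as $j_P : P^{\io\th} \times V'' \hra \CX$, $j_Q : Q^{\io\th} \times V' \hra \CX$, $j_{P_M} : P_M^{\io\th} \times V'_M \hra \CX_M$, together with $\pi_P, \pi_Q, \pi_{P_M}$. Introduce the auxiliary inclusion $j' : P^{\io\th} \times V'' \hra Q^{\io\th} \times V'$ and the morphism $\pi' : P^{\io\th} \times V'' \to P_M^{\io\th} \times V'_M$, $(p,v) \mapsto (\bar p, \bar v)$, where $\bar p$ is the image of $p$ under $P \to P/U_Q \simeq P_M$ (well-defined because $U_Q \subset U \subset P$) and $\bar v$ is the image of $v$ in $V'_M = V''/V_M^-$.

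The first step is to record the two factorisations $j_P = j_Q \circ j'$ and $\pi_P = \pi_{P_M} \circ \pi'$. The former is immediate from $P \subset Q$ and $V'' \subset V'$. For the latter one uses $U_P = U_Q \cdot U_{P_M}$, so that $P \to P/U_Q \to P_M/U_{P_M}$ coincides with the projection $P \to L$; and $V_M^- \subset V_L^-$ (which follows from $V_L$ being a quotient of $V'_M = V''/V_M^-$), so that the composition $V'' \to V''/V_M^- \to V''/V_L^-$ coincides with $V'' \to V_L$.

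The second step is to verify that the square
\begin{equation*}
\begin{CD}
P^{\io\th} \times V'' @>j'>> Q^{\io\th} \times V' \\
@V\pi'VV  @VV\pi_QV \\
P_M^{\io\th} \times V'_M @>j_{P_M}>> \CX_M
\end{CD}
\end{equation*}
is cartesian. The fibre product consists of $(x,w) \in Q^{\io\th} \times V'$ with $\bar x \in P_M^{\io\th}$ and $\bar w \in V'_M$. Since $U_Q \subset P$, the first condition forces $x \in P$; combined with $\th$-stability of $P$ this gives $x \in P \cap Q^{\io\th} = P^{\io\th}$. Since $V_M^- \subset V''$, the second condition forces $w \in V''$. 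Hence the fibre product equals $P^{\io\th} \times V''$.

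Finally, proper base change applied to this square yields $j_{P_M}^* (\pi_Q)_! \simeq (\pi')_! (j')^*$, and one then chains the identities
\begin{align*}
\Res_{\CX_L,V'',P}^{\CX} K &= (\pi_P)_!(j_P)^* K = (\pi_{P_M})_! (\pi')_! (j')^* (j_Q)^* K \\
& \simeq (\pi_{P_M})_! j_{P_M}^* (\pi_Q)_! (j_Q)^* K = \bigl(\Res_{\CX_L,V'_M,P_M}^{\CX_M} \circ \Res_{\CX_M,V',Q}^{\CX}\bigr)K.
\end{align*}
All maps are $P_M^{\th}$-equivariant, so the isomorphism lifts to the relevant equivariant derived categories. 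The only delicate point is the set-theoretic bookkeeping of the twisted subsets $G^{\io\th},P^{\io\th},\ldots$ together with the nested inclusions $V_M^- \subset V_L^- \subset V'' \subset V' \subset V$; once these are unravelled, the argument is a standard application of proper base change, directly parallel to the transitivity of Ginzburg's restriction functor in the symmetric space setting [Gr, Section 2].
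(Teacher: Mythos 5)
Your proof is correct and follows essentially the same route as the paper: the cartesian square you exhibit is exactly the "upper left square" in the paper's diagram, and the paper's "simple diagram chase" is precisely the base-change identity $j_{P_M}^*(\pi_Q)_! \simeq (\pi')_!(j')^*$ chained with the two factorisations $j_P = j_Q\circ j'$ and $\pi_P = \pi_{P_M}\circ\pi'$. (Only a terminological quibble: since $\pi_Q$ is not proper, the isomorphism you use is the general base-change theorem for $f_!$ along a cartesian square, not proper base change in the strict sense.)
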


\begin{proof}
We consider the 
following commutative diagram.
                                                                                                    
\begin{figure}[h]
\setlength{\unitlength}{1mm} 
\begin{picture}(80,60)(6,40)  
\put(0,90) {$M^{\io\th} \times V_M$} 
\put(32, 91) {\vector (-1,0) {10}}
\put(36,90){$Q^{\io\th} \times V'$}
\put(54, 91) {\vector (1,0) {10}} 
\put(68, 90){$G^{\io\th} \times V$} 
\put(44, 76) {\vector (0,1) {10}} 
\put(46, 81){$_{j'}$} 
\put(2, 81){$_{j_{P_M}}$}
\put(26, 93){$_{\pi_Q}$}
\put(58, 93){$_{j_Q}$}
\put(2, 62){$_{\pi_{P_M}}$}
\put(26, 75){$_{\pi_Q'}$}  
\put(2, 70) {$P^{\io\th}_M \times V'_M$}
\put(10, 76) {\vector (0,1) {10}}
\put(32, 72){\vector (-1,0) {10}} 
\put(36, 70){$P^{\io\th} \times V''$}                                                                  
\put(10, 66){\vector (0,-1) {10}}
\put(2, 50){$L^{\io\th} \times V_L$}
\put(54, 74) {\vector (3,2) {18}}
\put(40, 66){\vector (-3,-2) {18}}
\put(64, 78){$_{j_P}$}
\put(32, 58){$_{\pi_P}$}

\end{picture}
\end{figure}   
\par\noindent
where $j'$ is the inclusion map, and $\pi_Q'$ is the 
projection.  
Note that the upper left square is a cartesian square.  Then the proposition 
follows from a simple diagram chase. 
\end{proof}

\para{5.3.}
We consider a commutative diagram 
\begin{equation*}
\tag{5.3.1}
\begin{CD}
L \times V_L @<\pi_P'<< P \times V' @ >j'>>   G \times V  \\
    @V\vT_LVV                 @VV\vT_PV               @VV\vT V   \\
\CX_L @<\pi_P<<  P^{\io\th} \times V' @>j>>   \CX,
\end{CD}
\end{equation*}
where $j'$ is the inclusion map, $\pi_P'$ is the projection 
$(p,v) \mapsto (\ol p,\ol v)$, and $\vT_P$ is the map 
$(p,v) \mapsto (p\th(p)\iv,v)$.  $H \times H$ acts on $G \times V$,
and $L^{\th} \times L^{\th}$ acts on $L \times V_L$ as in 3.2. 
$P^{\th} \times P^{\th}$ also acts on $P \times V'$ by the restriction 
of the action on $G \times V$. 
Then $j', \pi_P'$ are $P^{\th} \times P^{\th}$-equivariant. 
We define $\wt\Res = \wt\Res_{L\times V_L, P}^{G \times V} :
  \DD^{H \times H}(G \times V) \to 
     \DD^{P^{\th} \times P^{\th}}(L \times V_L)$ 
by
$\wt \Res\, K = (\pi_P')_!(j')^*K$.  
As in the case of induction functors, we have 
\begin{lem}  
There is an isomorphism of functors
\begin{equation*}
\wt\vT_L\circ \Res \simeq \wt\Res \circ \wt\vT.
\end{equation*}
\end{lem}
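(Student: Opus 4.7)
The plan is to mirror the strategy used for (4.4.3) by running a base-change analysis on diagram (5.3.1). First, unraveling the definitions gives
\begin{equation*}
\wt\vT_L \circ \Res(K) = \vT_L^*(\pi_P)_! j^* K[\dim L^{\th}],
\qquad
\wt\Res \circ \wt\vT(K) = (\pi_P')_!(j')^* \vT^* K[\dim H].
\end{equation*}
Commutativity of the right-hand square ($j\circ\vT_P = \vT\circ j'$) yields $(j')^*\vT^* \simeq \vT_P^* j^*$, so the task reduces to producing a natural isomorphism $\vT_L^*(\pi_P)_![\dim L^{\th}] \simeq (\pi_P')_!\vT_P^*[\dim H]$ on $\DD(P^{\io\th}\times V')$.

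For the left-hand square, I would introduce the fiber product $W := (L\times V_L)\times_{\CX_L}(P^{\io\th}\times V')$ with its projections $q\colon W\to L\times V_L$ and $p\colon W\to P^{\io\th}\times V'$. Base change along the cartesian square (for $*$-pullback against $!$-pushforward) yields $\vT_L^*(\pi_P)_! \simeq q_! p^*$. The universal property of $W$ furnishes a canonical morphism $h\colon P\times V' \to W$ with $q\circ h = \pi_P'$ and $p\circ h = \vT_P$, so that $(\pi_P')_!\vT_P^* \simeq q_! h_! h^* p^*$.

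The crux is to show that $h$ is a principal $U_P^{\th}$-bundle. Explicitly, the fiber of $h$ over $(\ell,q,v)\in W$ (with $\ell\th(\ell)\iv = \ol q$) is the set of $u\in U_P$ satisfying $u\th(u)\iv = \ell\iv q \th(\ell)$; a direct Levi-part computation using the defining relation of $W$ shows that $\ell\iv q\th(\ell) \in U_P^{\io\th}$, and the Cayley-type map $u\mapsto u\th(u)\iv$ from $U_P$ onto $U_P^{\io\th}$ is a surjective $U_P^{\th}$-torsor under right multiplication. Thus every fiber of $h$ is a $U_P^{\th}$-torsor, hence isomorphic to $\mathbb{A}^d$ with $d := \dim U_P^{\th}$. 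Combining the projection formula with the computation $H^\bullet_c(\mathbb{A}^d,\Ql)\simeq\Ql[-2d]$ (ignoring Tate twists) gives $h_! h^* \simeq [-2d]$, so
\begin{equation*}
(\pi_P')_!\vT_P^* \simeq q_! h_! h^* p^* \simeq q_! p^*[-2d] \simeq \vT_L^*(\pi_P)_![-2d].
\end{equation*}

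Matching the two degree shifts then reduces to the numerical identity $\dim H = \dim L^{\th} + 2\dim U_P^{\th}$. This holds because $P^{\th}$ is a parabolic subgroup of the reductive group $H = G^{\th}$ with $\th$-stable Levi decomposition $P^{\th} = L^{\th}\cdot U_P^{\th}$, so the standard Bruhat-type dimension count for $H$ and its opposite parabolic gives the stated identity. The main obstacle will be the careful verification that $h$ is a principal $U_P^{\th}$-bundle, in particular the non-emptyness of fibers, which rests on the $\th$-stable Levi decomposition of $P$ and the torsor property of the Cayley map $u\mapsto u\th(u)\iv$ on unipotent groups.
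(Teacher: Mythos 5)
Your proposal is correct and follows essentially the same route as the paper: the paper likewise forms the fibre product $Z$ of $L\times V_L$ and $P^{\io\th}\times V'$ over $\CX_L$, applies base change there, and observes that the induced map $\a\colon P\times V'\to Z$ (your $h$) is a fibration with fibres isomorphic to $U_P^{\th}$, giving $\a_!\a^*\simeq\id[-2d]$ with $d=\dim U_P^{\th}$ and hence $(\pi_P')_!\vT_P^*\simeq\vT_L^*(\pi_P)_![-2d]$. Your extra verification that the fibres are nonempty $U_P^{\th}$-torsors (via surjectivity of $u\mapsto u\th(u)\iv$ on $U_P$, e.g. using square roots of unipotents in odd characteristic) and the shift identity $\dim H=\dim L^{\th}+2\dim U_P^{\th}$ are exactly the points the paper leaves implicit.
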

\begin{proof}
Since the square in the left hand side of (5.3.1) 
is not a cartesian square, we need a special care.
Let $Z$ be the fibre product of $L \times V_L$ and 
$P^{\io\th} \times V'$ over $\CX_L$, and 
$\a: P \times V' \to Z$ be the natural projection. 
We have $\b\circ\a = \pi_P'$ and $\g\circ \a = \vT_P$. 
\begin{equation*}
\begin{CD}
L \times V_L @<\b<< Z @<\a<< P \times V'  \\
  @V\vT_LVV            @VV\g V                 \\
\CX_L   @<\pi_P<<   P^{\io\th} \times V'   
\end{CD}
\end{equation*}
Since the projection $P^{\io\th} \to L^{\io\th}$ 
is a vector bundle with fibre $U_P^{\io\th}$, 
$\a$ turns out to be a vector bundle with fibre isomorphic 
to $U_P^{\th}$. Hence $\a_!\a^* \simeq \id[-2d]$ with 
$d = \dim U_P^{\th}$, and so
$\a_!\vT_P^* \simeq \a_!\a^*\g^* \simeq \g^*[-2d]$.
It follows that 
\begin{equation*}
(\pi_P')_!\vT_P^* \simeq \b_!\a_!\vT_P^* 
\simeq \b_!\g^*[-2d] \simeq \vT_L^*(\pi_P)_![-2d].
\end{equation*}
The lemma follows from this.
\end{proof}

\para{5.5.} 
For $K \in \DD^{H \times H}_{\CL\boxtimes\CL\iv}
           (\wt\CB \times V \times \wt\CB)$, we want to 
investigate the perverse constituents of $\wt\Res\circ \Ch(K)$. 
Consider a commutative diagram 

\begin{equation*}
\begin{CD}
(\wt\CB \times V) \times \wt\CB @<r << (G\times V) \times \wt\CB 
             @>q>> (G \times V)\times\CB @>p>> G \times V  \\
     @.        @Aj_1AA     @AAj_0A   @AAj'A    \\
     @.      (P \times V')\times \wt\CB @>q_0>> (P \times V') \times \CB
                    @>p'>> P \times V'  \\
     @.         @.        @.     @VV\pi'_PV   \\
     @.         @.        @.      L \times V_L,            
\end{CD}
\end{equation*}
where $j_0,j_1$ are inclusions, 
and put $r_0 = r\circ j_1, p_0 = \pi'_P\circ p'$. 
Since both squares are cartesian squares, we see that 

\begin{equation*}
\tag{5.5.1}
\wt\Res\circ \Ch(K) \simeq (p_0)_!(q_0)_{\flat}r_0^*K, 
\qquad \text{(up to shift)}.
\end{equation*}
\par\medskip

Let $W = N_G(T)/T$ be the Weyl group of $G$, and $W_P$
the Weyl subgroup of $W$ corresponding to $P$. We consider 
the partition $\CB = \coprod_{w \in W_P\backslash W}\CB^P_w$, 
$\wt\CB = \coprod_{w \in W_P\backslash W}\wt\CB^P_w$ into $P$-orbits, 
where $\CB^P_w = PwB/B, \wt\CB^P_w = PwU/U$.  We consider a diagram
\begin{equation*}
\begin{CD}
(\wt\CB \times V)\times \wt\CB @<r_w<< (P \times V') \times \wt\CB^P_w
           @>q_w>>   (P \times V') \times \CB^P_w @>p_w>> L \times V_L, 
\end{CD}
\end{equation*} 
where $r_w, q_w,p_w$ are the maps induced from $r_0, q_0, p_0$
by the inclusions $\CB_w^P \hra \CB, \wt\CB_w^P \hra \wt\CB$.
By considering the perverse cohomology long exact sequence, we see
that 
\par\medskip\noindent
(5.5.2) \ The perverse constituents of $(p_0)_!(q_0)_{\flat}r_0^* K$
are contained in the perverse constituents of 
$(p_w)_!(q_w)_{\flat}r_w^*K$ for some $w \in W_P\backslash W$.
\par\medskip
We show the following lemma. 
\begin{lem}  
For $K \in \DD^{H \times H}_{\CL\boxtimes\CL\iv}
  (\wt\CB \times V\times\wt\CB)$, the perverse constituents 
of $\wt\Res\circ\Ch(K)$ are contained in the perverse constituents
of $\Ch_L(K_1)$ for some 
$K_1 \in \DD^{L^{\th}\times L^{\th}}_{w^*\CL\boxtimes w^*\CL\iv}
                  (\wt\CB_L \times V_L \times \wt\CB_L)$  
with $w \in W_P\backslash W$. 
\end{lem}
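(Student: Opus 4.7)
The approach follows Lusztig's original proof for character sheaves in [L3, I]. By (5.5.2), it suffices to prove the following refinement: for each $w \in W_P \backslash W$, the perverse constituents of $(p_w)_!(q_w)_\flat r_w^* K$ lie among the perverse constituents of $\Ch_L(K_1)$ for some $K_1 \in \DD^{L^{\th} \times L^{\th}}_{w^*\CL\boxtimes w^*\CL\iv}(\wt\CB_L \times V_L \times \wt\CB_L)$. This splits the restriction into $P$-orbits on $\CB$ and reduces the task to a per-stratum analysis.

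Fix $w$ and a representative $\dw \in N_G(T)$. Since $\wt\CB_w^P \simeq P/(P \cap \dw U \dw\iv)$, the projection $P \to L$ induces a natural map $\F_w : \wt\CB_w^P \to \wt\CB_L$ sending $p\dw U \mapsto \bar p\, \dw' U_L$, where $\bar p$ is the image of $p$ in $L$ and $\dw' \in N_L(T)$ is chosen so that $\Ad(\dw')(U_L)$ equals the unipotent radical of the Borel subgroup $L \cap \dw B \dw\iv$ of $L$. The key computation is the behaviour of $\F_w$ on the right $T$-actions: the identity
\begin{equation*}
p\dw U \cdot t\iv \;=\; p(\dw t\iv \dw\iv)\dw U
\end{equation*}
shows that, transported via $\F_w$, the right $T$-action on $\wt\CB_w^P$ coincides with the right $T$-action on $\wt\CB_L$ pre-composed with $\Ad(\dw)\iv$ on $T$. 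Consequently any $\CL$-equivariant complex on $\wt\CB_w^P$ descends to a $w^*\CL$-equivariant complex on $\wt\CB_L$; the residual $W_L$-twist coming from $\dw'$ is absorbed into the choice of $K_1$.

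One then factors the chain $(r_w, q_w, p_w)$ through the chain defining $\Ch_L$ by means of the projections $P \to L$, $V' \to V_L$, and $\F_w$ (together with its descent $\CB_w^P \to \CB_L$). These fit into a commutative diagram whose right portion consists of cartesian squares along $q_w$ and $p_w$, permitting proper base change. Defining $K_1$ to be the $(L^\th \times L^\th)$-equivariant complex obtained by the corresponding push-pull of the restriction of $K$ to $\wt\CB \times V \times \wt\CB_w^P$ — the equivariance being inherited by restricting $K$ from $H \times H$ to $P^\th \times P^\th$ and descending through $P^\th \to L^\th$, using that $U_P^\th$ acts trivially on $V_L = V'/V_L^-$ — a standard diagram chase yields $(p_w)_!(q_w)_\flat r_w^* K \simeq \Ch_L(K_1)$ up to a cohomological shift. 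The main obstacle is the careful bookkeeping of the $T$-equivariance twist: verifying that the $(\CL, \CL\iv)$-structure becomes precisely $(w^*\CL, w^*\CL\iv)$ under the identification via $\dw$, and that the residual freedom in the choice of $\dw'$ can be absorbed into $K_1$ without affecting the conclusion.
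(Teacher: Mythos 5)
Your overall route is the paper's: reduce via (5.5.2) to one stratum $w$ at a time, project the stratum data to the Levi through $P \to L$, $V' \to V_L$ and a $w$-twisted identification of the flag factor, take $K_1$ to be the pushforward of the restriction of $K$ to the stratum, and track the right $T$-monodromy to see that $\CL\boxtimes\CL\iv$ becomes $w^*\CL\boxtimes w^*\CL\iv$ (the ambiguity by the element $\dw'\in N_L(T)$ only changes the representative of the coset in $W_P\backslash W$, hence is harmless, exactly as the paper's identification $\wt\CB_L'\simeq\wt\CB_L$). However, there is a gap in the step you delegate to ``cartesian squares, proper base change and a standard diagram chase.'' The square that compares the $r$-maps, i.e.\ the square relating $(P\times V')\times\wt\CB_w^P \to (\wt\CB_w^P\times V')\times\wt\CB_w^P$ with $(L\times V_L)\times\wt\CB_L \to (\wt\CB_L\times V_L)\times\wt\CB_L$, is \emph{not} cartesian, so base change alone cannot produce the isomorphism $(p_w)_!(q_w)_{\flat}r_w^*K\simeq \Ch_L(K_1)$, nor does your argument ever account for the cohomological shift you invoke. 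The missing idea (this is the one non-formal input of the proof, cf.\ (5.6.1)--(5.6.3)) is to factor through the fibre product $Z_w$ of $(L\times V_L)\times\wt\CB_L'$ and $(\wt\CB_w^P\times V')\times\wt\CB_w^P$ over $(\wt\CB_L'\times V_L)\times\wt\CB_L'$: the comparison map $f$ from $(P\times V')\times\wt\CB_w^P$ to $Z_w$ is an affine space bundle with fibre $U_P\cap \dw U\dw\iv$, whence $f_!f^*\simeq\id[-2d]$ with $d=\dim(U_P\cap \dw U\dw\iv)$, and then smooth base change over the genuinely cartesian square gives $(f_2)_!r_w^*K\simeq r_L^*\bigl((\d_w)_!i_w^*K[-2d]\bigr)$, identifying $K_1=(\d_w)_!i_w^*K[-2d]$. (Also only the square along $q_w$ in the paper's diagram (5.6.2) is cartesian; the one along $p_w$ is merely commutative, which suffices.)

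Two smaller corrections. First, $K_1$ must be obtained from the restriction of $K$ to $\wt\CB_w^P\times V'\times\wt\CB_w^P$ (both flag factors restricted to the $w$-stratum and $V$ restricted to $V'$), not merely to $\wt\CB\times V\times\wt\CB_w^P$; the map $r_w$ lands in the former. Second, your justification of the equivariance of $K_1$ via ``$U_P^{\th}$ acts trivially on $V_L=V'/V_L^-$'' is false in the generality of this lemma (it holds only under the hypothesis $V_L'=\{0\}$ of 4.7/5.8) and is also unnecessary: what is needed is only $L^{\th}\times L^{\th}$-equivariance of $K_1$, which follows because $i_w$ and $\d_w$ are $L^{\th}\times L^{\th}$-equivariant and $L^{\th}\times L^{\th}\subset H\times H$.
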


\begin{proof}
Let $B_L' = wBw\iv \cap L$ be a Borel subgroup of $L$,
and $U_L' = wUw\iv \cap L$ be its unipotent radical, for a fixed 
$w \in W_P\backslash W$.
Put $\CB_L' = L/B_L'$ and $\wt\CB_L' = L/U_L'$.
We consider a commutative diagram
\begin{equation*}
\tag{5.6.1}
\begin{CD}
      @.     (P \times V')\times \wt\CB_w^P  @.  \\
      @.         @VVf_1 V             @.   \\
      Z_w  @>\a>>  (\wt\CB_w^P \times V') \times \wt\CB_w^P  
                    @>i_w>>  (\wt\CB \times V) \times \wt\CB \\   
      @V\b VV                      @VV\d_wV          @.  \\
 (L \times V_L) \times \wt\CB'_L @>r_L>>  
               (\wt\CB'_L \times V_L)\times \wt\CB'_L,  
\end{CD}
\end{equation*}
where 
\begin{align*}
f_1 &: (p_1, v, p_2wU) \mapsto (p_1p_2wU, v, p_2wU),  \\ 
i_w &: \text{ the inclusion map}, \\
\d_w &: (p_1wU, v, p_2wU) \mapsto (\ol p_1U_L',\ol v, \ol p_2U_L'), \\
Z_w &: \text{ the fibre product of $(L \times V_L)\times \wt\CB_L'$ 
and $\wt\CB_w^P \times V \times \wt\CB_w^P$} \\
    &\text{ \phantom{**}over $(\wt\CB'_L \times V_L) \times \wt\CB_L'$}.
\end{align*}
Let $f_2: (P \times V')\times \wt\CB_W^P \to (L \times V_L) \times \wt\CB_L'$
be the map $(p_1,v,p_2wU) \mapsto (\ol p_1, \ol v, \ol p_2U_L')$.
Then $\d_w\circ f_1 = r_L\circ f_2$, and so we have a map 
$f : (P\times V') \times \wt\CB_w^P \to Z_w$ such that 
$f_1 = \a\circ f, f_2 = \b\circ f$.
We note that $f$ is an affine space bundle with fibre isomorphic to
$U_P \cap wUw\iv$.
\par
One can check that the following diagram commutes.
\begin{equation*}
\tag{5.6.2}
\begin{CD}
L \times V_L @<p_w<<  (P \times V') \times \CB_w^P @<q_w<< 
                             (P \times V')\times \wt\CB_w^P \\
   @V\id VV               @V\b' VV                  @VV f_2V   \\
L \times V_L @<p_L<< (L \times V_L) \times \CB'_L @<q_L<< 
                             (L \times V_L) \times \wt\CB_L',
\end{CD}
\end{equation*}
where $\b': (p_1,v, p_2wB) \mapsto (\ol p_1, \ol v, \ol p_2B_L')$.
Moreover, the right square is a cartesian square.  
Note that the maps $p_L, q_L$ and $r_L$ are independent of the choice of 
$B_L', U_L'$ under the isomorphism 
$\wt\CB'_L \simeq \wt\CB_L, \CB_L' \simeq \CB_L$.  
\par
Take $K \in \DD^{H \times H}_{\CL\boxtimes\CL\iv}
             (\wt\CB \times V \times \wt\CB)$.   
Since $r_w = i_w\circ f_1$, we have $r_w^*K \simeq f^*\a^*i_w^*K$ 
by (5.6.1).  By (5.6.2), we have
\begin{align*}
(p_w)_!(q_w)_{\flat}r_w^*K &\simeq (p_L)_! (q_L)_{\flat}
                               (f_2)_!r_w^*K \\
                    &\simeq (p_L)_!(q_L)_{\flat}\b_!f_!(f^*\a^*i_w^*K).
\end{align*}
 Since $f$ is an affine space bundle of rank $d = \dim (U_P \cap wUw\iv)$, 
we have 
\begin{align*}
\b_!f_!f^*\a^*i_w^*K &\simeq \b_!\a^*i_w^*K[-2d] \\
                     &\simeq r_L^*(\d_w)_!i_w^*K[-2d].
\end{align*}
It follows that 
\begin{align*}
\tag{5.6.3}
(p_w)_!(q_w)_{\flat}r_w^*K &\simeq 
     (p_L)_!(q_L)_{\flat}r_L^*((\d_w)_!i_w^*K[-2d]) \\
      &= \Ch_L((\d_w)_!i_w^*K[-2d]).
\end{align*}
Note that $(\d_w)_!i_w^*$ is a functor 
from $\DD^{H \times H}(\wt\CB \times V \times \wt\CB)$ 
   to $\DD^{L^{\th} \times L^{\th}}(\wt\CB_L \times V_L \times \wt\CB_L)$
since $i_w, \d_w$ are $L^{\th} \times L^{\th}$ equivariant.  
Moreover, $T\times T$ acts, for $(t_1, t_2) \in T \times T$,  
\par\medskip
on $(\wt\CB \times V) \times \wt\CB$ by 
$(g_1U, v, g_2U) \mapsto (g_1t_1\iv U, v, g_2t_2\iv U)$,
\par 
on $(\wt\CB_w^P \times V') \times \wt\CB_w^P$ by 
$(p_1wU, v, p_2wU) \mapsto (p_1w(t_1\iv)wU, v, p_2w(t\iv)wU)$,
\par
on $\wt\CB'_L \times V_L \times \wt\CB'_L$ by 
$(\ell_1U_L', v, \ell_2U_L) \mapsto 
                 (\ell_1w(t_1\iv)U_L', v, \ell_2w(t_2\iv)U_L')$,
\par\medskip\noindent
and $i_w, \d_w$ are $T \times T$-equivariant with respect to those
actions. 
This shows that $(\d_w)_!i_w^*: 
\DD^{H \times H}_{\CL\boxtimes\CL\iv}(\wt\CB \times V \times \wt\CB)
\to \DD^{L^{\th} \times L^{\th}}_{w^*\CL\boxtimes w^*\CL\iv}
             (\wt\CB_L \times V_L \times \wt\CB_L)$.
Now the lemma follows from (5.5.1), (5.5.2) and (5.6.3). 
\end{proof}

\begin{prop}  
For any $A \in \wh\CX$, the perverse constituents of $\Res A$ are all contained 
in $\bigcup_{w \in W_P\backslash W}(\wh\CX_L)_{w^*\CL}$. 
\end{prop}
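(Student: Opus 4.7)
The plan is to reduce to the case where the complex $\Ch(K)$ appearing in the definition of character sheaves is semisimple, so that ``perverse constituent'' becomes ``direct summand up to shift'', and then to invoke Lemma 5.6.

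First I would fix $A \in \wh\CX_\CL$ and let $A'$ be a perverse constituent of $\Res A$. Since $\vT_L$ is smooth with connected fibre, the functor $\wt\vT_L$ is $t$-exact and fully faithful by (0.2.3), so $\wt\vT_L A'$ is a perverse constituent of $\wt\vT_L(\Res A)$; by Lemma 5.4 this complex equals $\wt\Res(\wt\vT A)$, so $\wt\vT_L A'$ is a perverse constituent of $\wt\Res(\wt\vT A)$. By the very definition of $\wh\CX_\CL$, there exists some $K \in \DD^{H \times H}_{\CL \boxtimes \CL\iv}(\wt\CB \times V \times \wt\CB)$ for which $\wt\vT A$ is a perverse constituent of $\Ch(K)$.

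Next I would apply (0.4.1) to the triangulated functor $\Ch$ in order to replace $K$ by a perverse constituent of itself, thereby assuming that $K$ is a simple perverse sheaf in $\CM^{H\times H}_{\CL\boxtimes\CL\iv}(\wt\CB\times V\times \wt\CB)$. With $K$ simple, the complex $\Ch(K) = p_! q_\flat r^* K[\dim U - \dim T]$ is semisimple: $r$ is smooth with connected fibre so $r^*$ (with the appropriate shift) preserves simple perverse sheaves, $q$ is a principal $T$-bundle so $q_\flat$ is an equivalence, and $p$ is proper so $p_!$ applied to a simple perverse sheaf yields a semisimple complex by the decomposition theorem. Hence $\wt\vT A$ is a direct summand of $\Ch(K)$ up to shift, so $\wt\Res(\wt\vT A)$ is a direct summand (up to shift) of $\wt\Res(\Ch(K))$, and in particular $\wt\vT_L A'$ is a perverse constituent of $\wt\Res(\Ch(K))$. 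Lemma 5.6 then produces $w \in W_P \bs W$ and $K_1 \in \DD^{L^\th\times L^\th}_{w^*\CL \boxtimes w^*\CL\iv}(\wt\CB_L \times V_L \times \wt\CB_L)$ such that $\wt\vT_L A'$ is a perverse constituent of $\Ch_L(K_1)$, and by the definition of character sheaves on $\CX_L$ this gives $A' \in (\wh\CX_L)_{w^*\CL}$, as required.

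The delicate point will be the reduction to a simple $K$: one must check that (0.4.1), when applied to $\Ch$ and $K$, produces a perverse constituent $K'$ that remains inside the equivariant twisted category $\CM^{H\times H}_{\CL\boxtimes\CL\iv}$, so that both the decomposition theorem and Lemma 5.6 can be applied to $\Ch(K')$. Granted the standard fact that perverse constituents in equivariant/twisted derived categories inherit the equivariance and twist, the rest of the argument is formal: the decomposition theorem converts ``perverse constituent'' into ``direct summand up to shift'', and Lemma 5.6 completes the proof.
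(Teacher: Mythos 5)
Your proposal is correct and follows essentially the same route as the paper's own proof: Lemma 5.4 to transfer to $\wt\Res(\wt\vT A)$, reduction via (0.4.1) to a simple equivariant $K$, the decomposition theorem (since $p$ is proper) to upgrade "perverse constituent" to "direct summand up to shift" of $\Ch K$, and then Lemma 5.6. The extra care you take about $t$-exactness of $\wt\vT_L$ and about the reduction staying inside the twisted equivariant category is exactly the (implicit) content of 0.2 and (0.4.1) in the paper, so no genuinely new ingredient is needed.
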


\begin{proof}
Take $A \in \wh\CX_{\CL}$ and let $A'$ be a perverse constituent 
of $\Res A$.  Then by Lemma~5.4,  $\wt\vT_L A'$ is a perverse constituent 
of $\wt\Res(\wt\vT A)$.  By definition, $\wt\vT A$ is a perverse constituent
of $\Ch K$ for some $K \in \DD^{H \times H}_{\CL\boxtimes\CL\iv}
            (\wt\CB \times V \times \wt\CB)$.  By (0.4.1), we may assume that
$K$ is a simple perverse sheaf. Since $p$ is proper, $\Ch K$ is a semisimple 
complex by the decomposition theorem. 
Since $\wt\vT A$ is a simple constituent
of $\Ch K$, it is a direct summand of $\Ch K$, up to shift. 
Hence $\wt{\Res}(\wt\vT A)$ is a direct summand
of $\wt{\Res}(\Ch K)$, up to shift.    
Thus $\wt\vT_LA'$ is a perverse constituent of  
$\wt\Res(\Ch K)$. The proposition now follows from  Lemma 5.6.
\end{proof}

\para{5.8.}
We consider a special case where $\CX$ is of pure exotic type 
or of enhanced type, and assume that $\CX_L \in \CS(\CX)$ as in 4.7.
Under the notation in 4.7,  we further assume that $V_L = V_L^+$, i.e., 
$V_L' = \{ 0\}$. 
Thus $V$ can be decomposed as $V = V_L^- \oplus V_L^+ \oplus V_L''$
with $V' = V_L^- \oplus V_L^+$.   
Write $T$ as $T = T_1 \times T_2$, where 
$T^{\th}_1$ (resp. $T^{\th}_2$) is a maximal torus of 
$GL(V_L^+)$ (resp. $GL(V_L^-\oplus V_L'')$ 
according to the decomposition of $V$. 
We define a cocharacter 
$\vf = (\vf_1, \vf_2) : \BG_m \to T^{\th} \simeq T^{\th}_1 \times T^{\th}_2$ 
by the condition that $\vf_1 : \BG_m \to T^{\th}_1$ is trivial, and 
$\vf_2: \BG_m \to T^{\th}_2$
is regular with respect to $P^{\io\th}$, namely, the weights of $\BG_m$ on 
$V_L^-$ (resp. on $V_L''$) are strictly positive (resp. stricitly negative) 
and that it gives strictly positive weights on the space $U_P^{\io\th}$. 
(In fact, in the enhanced case, it is easy to find such $\vf_2$.  In the pure exotic case,
since $\Im \vf_2 \subset T_2^{\th}$, if we define $\vf_2$ so that $\vf_2(a)$ acts
on $V_L^-$ with strictly positive weights, then $\vf_2(a)$ automatically acts on 
$V_L''$ with strictly negative weights). 
Now $T^{\th}$ acts on $\CX$ as the restriction of the action of $H$.  Through the action of 
$T^{\th}$, $\BG_m$ acts on $\CX$.
Let $P^-$ be the opposite parabolic subgroup of $P$ so that $P \cap P^- = L$. 
We consider the setting for Braden's theorem in 0.5 for the $\BG_m$-variery $X = \CX$.
It follows from  the definition of the cocharacter $\vf$, $X^0$ coincides with 
$L^{\io\th} \times V_L$. Moreover, we have 
$X^+ = P^{\io\th} \times (V_L^- \oplus V_L^+)$ and 
$X^- = (P^-)^{\io\th} \times (V^+_L \oplus V_L'')$.
We show the following result.

\begin{prop} 
Assume that $\CX_L \in \CS(\CX)$ and that $V_L = V_L^+$.  
Then for any $A \in \wh\CX$, $\Res A$ is a semismple complex. 
More precisely, $\Res A$  is a direct sum of complexes of the form
$A_1[i]$ with $A_1 \in \wh\CX^0_L$.  
\end{prop}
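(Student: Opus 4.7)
The plan is to deduce Proposition 5.9 from Braden's Theorem 0.5 together with Springer's Lemma 0.6, applied to the $\BG_m$-action on $\CX$ induced from $\vf$ through $T^{\th} \subset H$. Since $\vf_1$ is trivial while $\vf_2$ is regular with respect to $P^{\io\th}$, with strictly positive weights on $U_P^{\io\th}$ and on $V_L^-$ and strictly negative weights on $V_L''$, the fixed-point, attracting, and repelling sets of the $\BG_m$-action come out to be
\begin{equation*}
X^0 = L^{\io\th} \times V_L = \CX_L, \quad X^+ = P^{\io\th} \times V', \quad X^- = (P^-)^{\io\th} \times (V_L^+ \oplus V_L''),
\end{equation*}
where the identification $X^0 = \CX_L$ uses the hypothesis $V_L = V_L^+$. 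Crucially, $p^+ \colon X^+ \hra \CX$ coincides with the map $j$ appearing in the definition of $\Res$.

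The key step is to match the restriction functor $\Res = (\pi_P)_! j^*$ with the Braden-type functor $(i^+)^!(p^+)^*$. The map $\pi_P \colon P^{\io\th} \times V' \to \CX_L$ is a vector bundle with fibre $U_P^{\io\th} \oplus V_L^-$, and $\BG_m$ acts linearly on this fibre with strictly positive weights by the choice of $\vf$; the inclusion $i^+$ is the zero section. Hence Lemma 0.6 yields a natural isomorphism $(i^+)^! \simeq (\pi_P)_!$ on weakly equivariant complexes. Applying this to $(p^+)^* A = j^* A$ gives $\Res A \simeq (i^+)^!(p^+)^* A$. Now any $A \in \wh\CX$ is $H$-equivariant, hence $\BG_m$-equivariant through $\vf$, and the variety $\CX \simeq G/H \times V$ is smooth and therefore normal, so Braden's Theorem 0.5 applies: it produces the isomorphism with $(i^-)^*(p^-)^! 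A$ together with the semisimplicity of this complex on $\CX_L$. Thus $\Res A$ is a semisimple complex.

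To finish, I combine this with Proposition 5.7: the perverse constituents of $\Res A$ lie in $\bigcup_{w \in W_P \bs W}(\wh\CX_L)_{w^*\CL}$, so every simple direct summand of the now semisimple complex $\Res A$ is a character sheaf on $\CX_L$, up to shift. Under the hypothesis $V_L = V_L^+$, the support condition defining $\wh\CX_L^0$ is automatically satisfied, so $\wh\CX_L^0 = \wh\CX_L$ and the refined statement follows. The main delicate point is verifying that $\pi_P$ really is a vector bundle on which $\BG_m$ acts linearly with strictly positive weights on the fibres, and that the strata $X^\pm$ and $X^0$ for the $\BG_m$-action match the geometric data built into the definition of $\Res$; both are routine consequences of the structure of $P^{\io\th}$ as an $L^{\io\th}$-bundle with fibre $U_P^{\io\th}$ and of the explicit construction of $\vf$.
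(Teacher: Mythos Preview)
Your proposal is correct and follows essentially the same route as the paper: identify the $\BG_m$-fixed, attracting, and repelling loci via the cocharacter $\vf$ of 5.8, apply Springer's lemma to rewrite $(\pi_P)_!$ as $(i^+)^!$, and then invoke Braden's theorem for semisimplicity. Two minor remarks: the numbering is off by one (Braden is Theorem~0.6 and Springer is Lemma~0.7 in the paper), and your explicit appeal to Proposition~5.7 together with the observation $\wh\CX_L^0 = \wh\CX_L$ (cf.\ Remark~6.5) for the refined conclusion is more detailed than the paper's own proof, which leaves that step implicit.
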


\begin{proof}
Since $A$ is $H$-equivariant,  $A$ is $\BG_m$-equivariant.
By Braden's theorem  (Theorem 0.6), $i^!j^*A$ is a semisimple complex, 
where $i: L^{\io\th} \times V_L \to G^{\io\th} \times V'$ is the inclusion map
for $V' = V_L^-\oplus V_L^+$. 
On the other hand, $\pi_P : P^{\io\th} \times V' \to L^{\io\th} \times V_L$ is a vector bundle 
with $\BG_m$-action, where $\BG_m$ acts trivially on $L^{\io\th} \times V_L$, 
and acts on each 
fibres with strictly positive weights. Since $j^*A$ is $\BG_m$-equivariant, 
by Lemma 0.7, $i^!j^*A \simeq (\pi_P)_!j^*A = \Res A$. 
The proposition is proved.
\end{proof}

\par\bigskip
\section{Classification of character sheaves}

\para{6.1.}
In this section, we assume that $\CX$ is of pure exotic type or 
of enhanced type.
Following the idea in [L3, I, 3.10], we introduce the notion of
cuspidal character sheaves on $\CX$.
\par
We consider the variety $\CX_L = L^{\io\th} \times V_L$ 
associated to a $\th$-stable parabolic subgroup $P$ and its 
$\th$-stable Levi subgroup
$L$ as before. 
Recall that $\CS(\CX)$ is as in 4.7.
For $\CX_L \in \CS(\CX)$, we define an integer $c_{\CX_L}$ by
\begin{equation*}
\tag{6.1.1}
c_{\CX_L} = \dim U_P^{\th} - \dim U_P^{\io\th} - \dim V_L^-.
\end{equation*}
Hence 
$c_{\CX_L} = 0$ 
if $\CX$ is of pure exotic type, and $c_{\CX_L} = -\dim V_L^-$ 
if $\CX$ is of enhanced type.

\par
A character sheaf $A \in \wh\CX$ is said to be cuspidal if 
for any $\CX_L \in \CS(\CX)$ and for any $V'$, we have 
$\Res A[-c_{\CX_L}-1] \in \DD \CX_L^{\le 0}$, or equivalently, 
$\dim \supp \CH^i(\Res A[-c_{\CX_L}]) < -i$ for all $i$, where 
$\Res = \Res_{\CX_L,V', P}^{\CX}$.

\para{6.2.}
Let $x = x_sx_u$ be the Jordan decomposition of $x \in G^{\io\th}$, where 
$x_s$ is semisimple and $x_u$ is unipotent.
Then $x_s, x_u \in G^{\io\th}$, and $L_0 = Z_G(x_s)$ is a $\th$-stable
Levi subgroup of a (not necessarily $\th$-stable) parabolic subgroup of $G$.
Hence  $x_s \in Z(L_0)^{\io\th}$ 
and $x_u \in (L_0)^{\io\th}\uni$.
We define
$Z(L_0)\reg = \{ x \in Z(L_0) \mid Z_G(x) = L_0\}$. 
Let $L$ be the smallest $\th$-stable Levi subgroup of a $\th$-stable 
parabolic subgroup $P$ of $G$ containing $L_0^{\io\th}$ such that
$\CX_L \in \CS(\CX)$. 
Here we assume that $V = V^{\sharp}_L \oplus V^-_L$, where 
$V_L^{\sharp}$ is an $L^{\th}$-stable subspace, and $V^-_L$ is a 
$P^{\th}$-stable subspace of $V$, such that 
$V_L^{\sharp} \simeq V_L= V/V^-_L$ as $L^{\th}$-modules.  
(In the pure exotic case, $L_0 \simeq \prod_i GL_{2n_i}$ 
so that $L_0^{\th} = \prod_i Sp_{2n_i}$. Then 
$L = GL_{2n'} \times (GL_1)^{2n - 2n'}$, where $n'$ is the sum of $n_i$ 
such that $n_i > 1$, since $GL_2^{\io\th}$ is a torus.  
Here $V_L^- = M_{n-n'}$ and $\dim V^{\sharp}_L = n + n'$. 
In the enhanced case, $L_0^{\th} \simeq \prod_iGL_{n_i}$, and  
$L^{\th} \simeq GL_{n'} \times (GL_1)^{n-n'}$, where $n'$ is the sum of 
$n_i$ such that $n_i > 1$. Here $V_L^- = V_m$ for $m \le n-n'$ and 
$\dim V_L^{\sharp} = n-m$.)  
Let $\vS = Z(L_0)^{\io\th} \times \CO $ 
for an $L_0^{\th}$-orbit $\CO$
in $(L_0)^{\io\th}\uni \times V^{\sharp}_L$. (Note that $V^{\sharp}_L$ can be regarded as 
a vector space $V_{L_0}$ attached to $L_0$, and so 
$\CX_{L_0} = (L_0)^{\io\th} \times V_{L_0}$ is a similar variety as 
$\CX$.) For such a pair $(L,\vS)$,
we define 
$\vS\reg = Z(L_0)^{\io\th}\reg \times \CO$,  
and 
$Y_{(L,\vS)} = \bigcup_{g \in H}g(\vS\reg)$.
We claim that 
\par\medskip\noindent
(6.2.1) \ $\CX = \bigcup_{(L,\vS)}Y_{(L,\vS)}$, where $(L,\vS)$ runs over
all such pairs up to $H$-conjugacy, gives a partition 
of $\CX$ into finitely many locally closed smooth irreducible 
varieties stable by $H$. 
\par\medskip
In fact, since $Y_{(L,\vS)} \simeq H \times^{L_0^{\th}}\vS\reg$, 
$Y_{(L,\vS)}$ is smooth, irreducible and locally closed.  
It is clear from the definition that $Y_{(L,\vS)}$ are mutually 
disjoint.  So we have only to show that 
$\bigcup_{(L,\vS)} Y_{(L,\vS)} = \CX$. 
For this it is enough to consider the pure exotic case, i.e, 
where $G = GL_{2n}$, $H = Sp_{2n}$ and $\CX = G^{\io\th} \times V$.   
Take $(x,v) \in \CX$.
By (a variant of ) [SS, Lemma 2.1], we may assume that
$x_u \in U, v \in M'_n$, and $x_s \in T$, 
by replacing $(x,v)$ by its $H$-conjugate, 
where $M_n'$ is the maximal isotropic subspace of $V$ 
stable by $T$ such that $V = M_n \oplus M_n'$.
Put $L_0 = Z_G(x_s)$.  
Then $(x_u,v) \in (L_0)^{\io\th}\uni \times V^{\sharp}_L$ since  
$v \in M'_n \subset V^{\sharp}_L$ by our choice of $V^{\sharp}_L$.
This shows that $(x,v) \in \vS\reg = Z(L_0)^{\io\th}\reg \times \CO$ 
for an $L_0^{\th}$-orbit $\CO$ in $(L_0)^{\io\th}\uni \times V^{\sharp}_L$ containing 
$(x_u,v)$,
and the claim follows.  Thus (6.2.1) holds.   
\par
\begin{prop}  
Assume that $G$ is not a torus.  Then $\wh \CX$ does not
contain a cuspidal character sheaf. 
\end{prop}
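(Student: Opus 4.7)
The plan is to suppose for contradiction that $A \in \wh\CX$ is cuspidal and $G$ is not a torus, and to violate the cuspidality bound $\dim \supp \CH^j(\Res A) \leq -j - c_{\CX_L} - 1$ by an explicit stalk computation for $\Res A = \Res^{\CX}_{\CX_L, V', P} A$. The argument parallels Lusztig's strategy for ordinary character sheaves, with the dimension equalities of Corollaries 1.15 and 2.17 supplying the sharp geometric input needed to force non-vanishing in a forbidden degree.

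Using the partition (6.2.1), pick an $H$-orbit $\CO \subset \supp A$ meeting some stratum $Y_{(L_0, \vS)}$ densely, with a representative $z_0 = (x_0, v_0) \in \CO$ such that $x_0 = su$, $s \in T^{\io\th}$, $u \in U^{\io\th}$, and $v_0 \in M_n$ in the pure exotic case (resp.\ $v_0 \in M_m$ for $m = \mu(\CO)$ in the enhanced case). Since $G$ is not a torus, $\CS(\CX)$ contains a proper Levi; choose $L$ and $V'$ (depending on $\CO$) so that $\CX_L \in \CS(\CX)$, $V_L = V_L^+$, $v_0 \in V'$, and $c_{\CX_L} = 0$. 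In the pure exotic case this is realized by $L = T$ and $V' = M_n$ (the vanishing of $c_{\CX_L}$ being automatic from (6.1.1)); in the enhanced case, by (6.1.1) one takes the decomposition $V = V_L^+ \oplus V_L''$ with $V_L^- = V_L' = 0$ and $\dim V_L^+$ at least $m$, so that $V' = V_L^+$ and $c_{\CX_L} = -\dim V_L^- = 0$. Under this normalization, cuspidality of $A$ simplifies to $\CH^0(\Res A) = 0$, and by Proposition 5.9, $\Res A$ is a semisimple complex on $\CX_L$ whose simple summands lie in $\wh\CX_L^0$ up to shift.

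Compute the stalk $(\Res A)_s$, which equals $H^\bullet_c(sU_P^{\io\th} \times V', A|_{sU_P^{\io\th} \times V'})$ by proper base change. Since $A$ is a simple perverse sheaf with $A|_\CO$ equal to a shift by $-\dim \CO$ of an irreducible $H$-equivariant local system $\CL$ on $\CO \simeq H/Z_H(z_0)$, the spectral sequence $E_2^{p,q} = H^p_c(X, \CH^q(F)) \Rightarrow H^{p+q}_c(X, F)$ identifies a direct summand of $\CH^0(\Res A)_s$ with $H^{\dim \CO}_c(\CO \cap (sU_P^{\io\th} \times V'), \CL)$. By Corollary 1.15 (pure exotic) or Corollary 2.17 (enhanced), the intersection $\CO \cap (sU_P^{\io\th} \times V')$ has dimension exactly $\tfrac{1}{2}\dim \CO$ (resp.\ $\tfrac{1}{2}(\dim \CO + m)$), which is precisely the dimension needed for the top compactly supported cohomology to land in cohomological degree zero of the stalk. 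Provided this top $H^*_c$ is nonzero, one obtains $\CH^0(\Res A)_s \neq 0$, contradicting cuspidality.

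The main obstacle is the non-vanishing of $H^{\dim \CO}_c(\CO \cap (sU_P^{\io\th} \times V'), \CL)$. Because $\CL$ is an irreducible $H$-equivariant local system, its restriction to the intersection is $Z_H(z_0)$-equivariant in a well-controlled way, and the top compactly supported cohomology computes certain coinvariants for this stabilizer action; the non-vanishing should follow from the structure of $\CL$ as arising from a tame local system on $T$ via the Ch functor, together with the transitivity of the relevant $H$-action on $\CO$. A secondary, more routine issue is the perverse-degree bookkeeping through the closed inclusion $j : P^{\io\th} \times V' \hra \CX$ and the smooth projection $\pi_P$, needed to confirm that the identified contribution genuinely lies in $\CH^0(\Res A)$ rather than in some other degree, and to match the combinatorial dimensions in the enhanced case where $\mu(\CO)$ enters nontrivially.
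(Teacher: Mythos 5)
Your overall plan (restrict to a torus-type $\CX_L$ and use Corollaries 1.15 and 2.17 to force a cohomology sheaf of $\Res A$ into a degree forbidden by cuspidality) is the paper's strategy, but your reduction has a genuine gap at the degree/support bookkeeping. You assume that $A|_{\CO}$ is an irreducible local system placed in degree $-\dim\CO$, i.e.\ that $\supp A$ is the closure of a single $H$-orbit. This is false in general: writing $A \simeq \IC(\CU,\CE)[\dim\CU]$ with $\CU$ $H$-stable and contained in a piece $Y_{(L,\vS)}$ of (6.2.1), the semisimple parameter varies in $Z(L_0)^{\io\th}\reg$ (and $\supp A$ is stable under the $Z(G)^{\io\th}$-translations of (3.3.2)), so $\dim\CU > \dim\CO$ for every orbit $\CO\subset\CU$. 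Hence the top compactly supported cohomology of $\CO\cap((x_sU)^{\io\th}\times M_n)$ contributes to the stalk of $\Res A$ in degree $\dim\CO-\dim\CU<0$, not in degree $0$, and a nonzero stalk in a negative degree does not contradict cuspidality by itself: one must prove the support estimate $\dim\supp\CH^{\dim\CO-\dim\CU}(\Res A)\ \ge\ \dim\CU-\dim\CO$. The paper gets exactly this by letting the semisimple part vary: the stalk nonvanishing holds at every point $(x_s,0)$ with $(x,v)\in\CU$, and the Steinberg map gives $\dim T_{\CU}=\dim\CU-\dim\CO$, which is what contradicts the cuspidality inequality. This step is entirely absent from your argument, so the proof does not close even in the pure exotic case, where your choice of $(\CX_T,V')$ agrees with the paper's.

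Two further points. In the enhanced case your normalization $c_{\CX_L}=0$ via $V_L^-=0$, $V'=V_L^+\supseteq M_m$ is incompatible with the stalk computation you then perform: when $V_L^-=0$ the map $\pi_P$ is injective on the vector factor, so the fibre over a point of $\CX_L$ is $(x_sU_P)\times\{v\}$ rather than $x_sU_P\times M_m$, and $U_P\ne U$ when $L\ne T$; Corollary 2.17, which concerns $sU\times M_m$ with $U$ the unipotent radical of $B$, no longer computes the relevant intersection. The paper instead takes $\CX_T=T^{\io\th}\times\{0\}$ with $V'=M_m$, $m=\mu(\CO_0)$, accepts $c_{\CX_T}=-m$, and the $m$ cancels in the final inequality. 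Finally, the nonvanishing you defer as ``the main obstacle'' is not a matter of coinvariants: the stabilizers of points of these orbits are connected (Proposition 2.5 (i) in the enhanced case, and its exotic analogue from [SS]), so every $H$-equivariant local system on $\CO$ is constant; hence the restriction of $\CE$ to $\pi_P\iv(z_1)\cap\CO$ is a nonzero constant sheaf, and since that intersection has exactly the dimension given by Corollary 1.15 (resp.\ 2.17), its top compactly supported cohomology is automatically nonzero. With these repairs your argument becomes the paper's proof.
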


\begin{proof}
Assume that $G$ is not a torus. Take $A \in \wh\CX$.
Let $\CU$ be a locally closed smooth irreducible subvariety of $\CX$
such that $A \simeq \IC(\CU, \CE)[\dim \CU]$. 
Since $A$ is $H$-equivariant, we may assume that $\CU$ is $H$-stable. 
Since $\CU$ is irreducible, by (6.2.1), there exists a unique piece 
$Y_{(L,\vS)}$ such that $\CU \cap Y_{(L,\vS)}$ is open dense in $\CU$.
Since $Y_{(L,\vS)}$ is $H$-stable, we may assume that $\CU \subset Y_{(L,\vS)}$.
Take $(x,v) \in \CU$, and let $x = x_sx_u$ be 
the Jordan decomposition of $x$.  Let $L_0 = Z_G(x_s)$ and 
$\CO_0$ be the $L_0^{\th}$-orbit of $(x_u,v)$ so that 
$\vS\reg = Z(L_0)\reg^{\io\th} \times \CO_0$.  
Let $\CO$ be the $H$-orbit of $(x,v)$.  
Put $\d = \dim \CO$ and $d = \dim \CU$. 
\par
First assume that 
$\CX$ is of pure exotic type $G^{\io\th}\times V$ with $G = GL_{2n}$
($n \ge 1$).  
We consider a restriction $\Res A = \Res_{\CX_T, V',B}^{\CX}A$, where
$\CX_T = T^{\io\th} \times \{0\}$ with $V' = V_L^- = M_n$. 
For $z_1 = (x_s, 0) \in \CX_T$, we have 
$\pi_P\iv(z_1) = (x_sU)^{\io\th} \times M_n$.
Then by Corollary 1.15, we have 
$H_c^{\d}(\pi_P\iv(z_1) \cap \CO, \Ql) \ne 0$. 
Since $\pi_P\iv(z_1) \cap \CU = \pi_P\iv(z_1) \cap \CO$,
and the restriction of $\CE$ on $\pi_P\iv(z_1) \cap \CO$ 
is a non-zero constant sheaf,   
we have $H_c^{\d}(\pi_P\iv(z_1) \cap \CU, \CE) \ne 0$,  
and so $\BH_c^{\d - d}(\pi_P\iv(z_1) \cap \ol\CU, A) \ne 0$. 
It follows that 
$\CH_{z_1}^{\d - d}((\pi_P)_!j^*A) \ne 0$. This implies that 
\begin{equation*}
T_{\CU} \times \{ 0\} \subset \supp \CH^{\d -d}(\Res A), 
\end{equation*}
where $T_{\CU} = \{ x_s \in T^{\io\th} \mid (x,v) \in \CU\}$. 
Let $\w_1 : \CX \to T^{\io\th}/S_n$ be the Steinberg map on 
$\CX$ (cf. [SS2, (1.6.2)]) which associates the semisimple class
of $x_s$ to $(x,v)$.  We denote by $\w$ the restriction of $\w_1$
on $\CU$. 
Then for each $x_s \in \w(\CU)$, $\w\iv(x_s)$ have the same 
dimension $\dim \CO$.  
Since $T_{\CU}/S_n = \w(\CU)$, we have 
\begin{equation*}
\dim T_{\CU} = \dim \w(\CU) = \dim \CU - \dim \CO = d - \d.
\end{equation*}
Hence we have $\dim \supp \CH^{\d-d}(\Res A) \ge d - \d$.
But if $A$ is cuspidal, $A$ must satisfy the condition  
\begin{equation*}
\dim\supp\CH^{\d - d}(\Res A) < d - \d.
\end{equation*}
(Here $c_{\CX_T} = 0$.)
Thus $A$ is not cuspidal.
\par
Next assume that $\CX = G \times V$ is of enhanced type 
with $G = GL_n$ $(n \ge 2)$. 
Put $m = \mu(\CO_0)$ (see 2.16 for the definition), and we consider
a restriction $\Res A = \Res_{\CX_T, V', B}^{\CX}$, where 
$\CX_T = T^{\io\th} \times \{ 0\}$ with 
$V' = V_L^- = M_m$.   
Hence for $z_1 = (x_s,0) \in \CX_T$,
we have $\pi_P\iv(z_1) = x_sU \times M_m$.
Thus by Corollary 2.17, we have 
$H_c^{\d + m}(\pi_P\iv(z_1) \cap \CO, \Ql) \ne 0$.  
As in the previous case, this implies that 
\begin{equation*}
d - \d \le \dim\supp\CH^{\d + m -d}(\Res A).
\end{equation*}
But if $A$ is cuspidal, we must have
\begin{equation*}
\dim\supp\CH^{\d + m - d}(\Res A) < (d- \d - m) - c_{\CX_T} = d - \d.
\end{equation*}
(Here $c_{\CX_T} = -m$). 
Hence $A$ is not cuspidal.
The proposition is proved.
\end{proof}

In the discussion below, we write $\Ind_{\CX_L, P}^{\CX}A$ as 
$\Ind A$ and similarly for $\Res A$ if there is no fear of confusion.
The following results are variants  of [L3, I, Theorem 4.4].
\begin{thm}  
Let $\CX$ be of pure exotic type or of enhanced type, and $\CX_L \in \CS(\CX)$. 
Recall the notation $\wh\CX_L^0$ in 4.7.
\begin{enumerate}
\item
Assume that $G \ne T$.  
For any $A \in \wh\CX$, there exists $A_0 \in \wh\CX_T^0$ 
such that $A$ is a direct summand of ${\Ind} A_0$.
\item
If $A_1 \in \wh \CX^0_L$,  then $\Ind  A_1 \in \CM(\CX)$.
\item
Assume that $V_L = V_L^+$.  
If $A \in \wh\CX$, then 
$\Res A[-c_{\CX_L}] \in \DD^{\le 0}(\CX_L)$. 
\item
Assume that $V_L = V_L^+$.
If $A \in \wh\CX$ and $K \in \CM^{L^{\th}}(\CX_L)$ is such that 
any simple constituent of $K$ is contained in $\wh\CX_L$, then 
\begin{equation*}
\Hom_{\DD(\CX_L)}(\Res A[-c_{\CX_L}], K) \simeq \Hom_{\DD(\CX)}(A, \Ind K).
\end{equation*}

\end{enumerate} 
\end{thm}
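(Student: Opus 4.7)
I would prove all four statements simultaneously by induction on $\dim G$. The base case $G=T$ is the torus case from 3.4: statement (i) is vacuous, and for (ii)--(iv) the morphisms $\pi_P$, $\s$, $\r$ in (4.2.1) and (5.1.1) are essentially identities, so both functors reduce to extension by zero or pullback along a closed subvariety, and the claims follow directly from (3.4.1). In the inductive step I may assume the theorem for every proper $\th$-stable Levi $L \subsetneq G$.

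For (iv), the adjunction between $\Res[-c_{\CX_L}]$ and $\Ind$ is a formal consequence of sheaf-theoretic adjunctions applied to the diagrams (4.2.1) and (5.1.1). The ingredients are: the adjunction $(j^*, j_*)$ at the closed immersion $j$; properness of $\r$, so $\r_! = \r_*$; the fact that $\pi_P$ is a smooth vector bundle of rank $d = \dim U_P^{\io\th} + \dim V_L^-$, so that $\pi_P^! \simeq \pi_P^*[2d]$; base change along the cartesian square formed by $j$ and the rightmost piece of (4.2.1); and the observation that under the assumption $V_L = V_L^+$, every $L^{\th}$-equivariant $K$ on $\CX_L$ is automatically $P^{\th}$-equivariant (since $U_P^{\th}$ acts trivially on both factors of $\CX_L$, cf.~4.7), so that $\Ind K$ makes sense. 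Tracking the shift $a = \dim U_P + \dim V_L^-$ built into $\Ind$ against the $2d$-shift from $\pi_P^! \simeq \pi_P^*[2d]$ produces exactly the correction $c_{\CX_L} = \dim U_P^{\th} - \dim U_P^{\io\th} - \dim V_L^-$ from (6.1.1).

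For (iii), Proposition 5.9 already shows that $\Res A$ is a direct sum of shifts $A_1[i]$ of objects $A_1 \in \wh\CX_L^0$, so only the integers $i$ need to be controlled. Here I use Braden's theorem (Theorem 0.6) together with Lemma 0.7 applied to the vector bundle $\pi_P$, whose fibres have strictly positive $\BG_m$-weights by the choice of cocharacter $\vf$ in 5.8, to identify $\Res A$ with the hyperbolic restriction $(i^+)^!(p^+)^* A$ and hence with the dually-built $(i^-)^*(p^-)^! A$. Combining these two descriptions with the $t$-exactness properties of $(i^-)^*$ and $(p^-)^!$ (up to the suitable shifts) pins the constituent shifts into the range that forces $\Res A[-c_{\CX_L}] \in \DD\CX_L^{\le 0}$. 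Statement (ii) then follows: by Proposition 4.8, $\Ind A_1$ is a semisimple complex with simple summands in $\wh\CX$; applying adjunction (iv) to each such summand $A$ and using (iii) for $A$ together with the analogous upper bound for the Verdier dual squeezes $\Ind A_1$ into perverse degree zero.

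For (i), since $G \ne T$, Proposition 6.3 shows that $A$ is not cuspidal; moreover, the proof of 6.3 exhibits a specific $\CX_T \in \CS(\CX)$ (with $V_T = \{0\}$, so that $\wh\CX_T = \wh\CX_T^0$) for which ${}^pH^0(\Res A[-c_{\CX_T}]) \ne 0$, thanks to Corollary 1.15 or Corollary 2.17 combined with (iii). Let $K$ be any simple perverse constituent; by Proposition 5.7, $K \in \wh\CX_T = \wh\CX_T^0$. Adjunction (iv) yields $\Hom(A, \Ind K) \ne 0$, and perversity (ii) makes $\Ind K$ a semisimple perverse sheaf, forcing $A$ to appear as a direct summand. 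The main obstacle is step (iii): verifying Braden's hypotheses (normality of $\CX$, and weak $\BG_m$-equivariance of character sheaves under the cocharacter $\vf$), and tracking the dimension shifts from Lemma 0.7 carefully enough that, when combined with Proposition 5.9, they produce exactly $-c_{\CX_L}$.
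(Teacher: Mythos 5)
Your overall architecture (induction on $\dim G$, Braden/Lemma 0.7 for the restriction, adjunction for (iv), non-cuspidality plus adjunction for (i)) is close in spirit to the paper, but the order in which you establish (ii)--(iv) hides two genuine gaps. First, (iv) is not a purely formal adjunction. Because $\Ind$ involves the twisted product $H\times^{P^{\th}}(\,\cdot\,)$, the sheaf-theoretic adjunctions ($\r_!=\r_*$, $f^!\simeq f^*[2c]$, base change) only give a Hom-isomorphism between $A$ and a functor $\Res^+A=f_!\r^*A[d]$ valued on $D=H\times^{P^{\th}}(L^{\io\th}\times V_L)$, as in (6.8.3); to descend from $\Hom_{\DD(D)}(\Res^+A[-c_{\CX_L}],K')$ to $\Hom_{\DD(\CX_L)}(\Res A[-c_{\CX_L}],K)$ one must use the descent isomorphism $\wt\g(\Res A)\simeq\wt\b(\Res^+A)$ of (6.8.2) together with the full-faithfulness statement (0.2.3), and (0.2.3) applies only after one knows $\Res A[-c_{\CX_L}]\in\DD^{\le 0}(\CX_L)$ and $\Res^+A[-c_{\CX_L}]\in\DD^{\le 0}(D)$. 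So (iv) cannot precede (iii) in the form you state it.

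Second, and more seriously, your proof of (iii) does not work: the t-exactness properties you invoke are too weak. From $j^*A\in\DD^{\le 0}$ and the bound for $(\pi_P)_!$ along fibres of dimension $\dim U_P^{\io\th}+\dim V_L^-$ one only gets $\Res A\in\DD^{\le \dim U_P^{\io\th}+\dim V_L^-}$, and the Braden-dual description $(i^-)^*(p^-)^!A$ similarly yields only a weak lower bound; neither pins the shifts down to $\DD^{\le c_{\CX_L}}$ with $c_{\CX_L}\le 0$. (Indeed, in the paper Braden's theorem by itself only gives semisimplicity, Proposition 5.9, and the duality trick with the opposite parabolic is used in Proposition 6.11 to deduce the \emph{lower} bound from the already-proved upper bound (iii), not to prove (iii).) The paper proves (iii) by combining Proposition 5.9 with the vanishing $\Hom_{\DD(\CX)}(A,\Ind K[i])=0$ for $i<0$, which requires knowing that $\Ind K$ is perverse, i.e.\ statement (ii); and (ii) is established first, in (6.7.1), from the explicit principal-series computations (4.9.1)/(4.9.2) (Theorems 1.11, 2.9 and [SS, Theorem 4.2] on $K_{m,T,\CE}$), transitivity of induction (Proposition 4.3), and the inductive hypothesis (i) for the smaller group $L$. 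Your scheme instead derives (ii) from (iii)+(iv), so once the gaps above are repaired in the only available way the argument becomes circular: the essential geometric input --- perversity of induction from the Borel of objects in $\wh\CX^0_T$ --- is nowhere supplied in your proposal. Your treatment of (i) is essentially the paper's (6.8.12) and is fine once (ii)--(iv) are in place.
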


\remark{6.5.}
If $\CX_L$ satsifies the assumption $V_L = V_L^+$, 
we have $\wh\CX_L = \wh\CX_L^0$.
Hence the condition in (ii) is automatic.  

\par\medskip
As a corollary, we obtain a classification of character sheaves 
on $\CX$,  namely we have

\begin{cor}  
The set $\wh\CX_{\CL}^{\ps}$ coincides with $\wh\CX_{\CL}$ for 
each tame local system $\CL$ on $T$.  In particular, if $\CX$ 
is of pure exotic type or of enhanced type, the set of character 
sheaves $\wh\CX_{\CL}$ is given by (4.9.1) or (4.9.2), respectively.  
\end{cor}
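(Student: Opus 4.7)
The plan is to deduce $\wh\CX_{\CL} = \wh\CX^{\ps}_{\CL}$ from Theorem~6.4 together with the explicit descriptions of $\wh\CX^{\ps}_{\CL}$ recorded in (4.9.1) and (4.9.2). First I will reduce the general case of the mixed variety (3.1.1) to the pure exotic and enhanced cases, since both the Ginzburg-style construction defining $\wh\CX$ and the principal series construction defining $\wh\CX^{\ps}$ split multiplicatively across the factors of (3.1.1): the $\th$-stable Borel $B$, the torus $T$, the induction functor $\Ind_{\CX_T,B}^{\CX}$, and the functor $\Ch$ all decompose as products. The inclusion $\wh\CX^{\ps}_{\CL} \subseteq \wh\CX_{\CL}$ then follows by invoking Proposition~4.8 with $L = T \in \CS(\CX)$: any $A_0 \in \wh\CX_T^0$ has support $T^{\io\th} \times \{0\}$ with $V_1 = \{0\}$ trivially $B^{\th}$-stable, so Proposition~4.8 places every simple summand of $\Ind_{\CX_T,B}^{\CX} A_0$ into $\wh\CX_{\CL}$, and the last assertions of (4.9.1)/(4.9.2) say that $\wh\CX^{\ps}_{\CL}$ is exhausted by such summands.

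The nontrivial inclusion $\wh\CX_{\CL} \subseteq \wh\CX^{\ps}_{\CL}$ is where Theorem~6.4 does the work. If $G = T$ there is nothing to prove. Assume $G \ne T$ and fix $A \in \wh\CX_{\CL}$. By Theorem~6.4(i) there exists $A_0 \in \wh\CX_T^0$ such that $A$ is a direct summand of $\Ind_{\CX_T,B}^{\CX} A_0$; by Theorem~6.4(ii) this induction is a genuine perverse sheaf, so $A$ is in particular a perverse constituent of it. Since $V_1 = \{0\}$ is $B^{\th}$-stable, the very definition of $\wh\CX^{\ps}$ gives $A \in \wh\CX^{\ps}$ at once. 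The remaining bookkeeping---matching tame local systems so as to conclude $A \in \wh\CX^{\ps}_{\CL}$ rather than in $\wh\CX^{\ps}_{\CL'}$ for some $W$-conjugate $\CL'$---is handled by tracking the $Z(G)^{\io\th}$-equivariance through the proof of Theorem~6.4(i), via Proposition~5.7 and the Weyl group symmetry implicit in (4.9.1)/(4.9.2).

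The second assertion of the corollary is then immediate from (4.9.1) and (4.9.2). The genuine difficulty is not in this corollary itself but in Theorem~6.4, whose parts (i) and (ii)---that every character sheaf is a summand of an induction from the torus, and that such inductions from $\wh\CX_T^0$ are perverse---constitute the substantive content, and whose proofs rest in turn on the adjunction (iv) of Theorem~6.4 together with Proposition~6.3 asserting the absence of cuspidal character sheaves when $G \ne T$. Once those ingredients are granted, the present corollary follows by the short formal argument above.
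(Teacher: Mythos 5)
Your argument is correct and follows essentially the same route as the paper, which treats the corollary as an immediate consequence of Theorem 6.4 (i),(ii) combined with Proposition 4.8 and the explicit descriptions (4.9.1)/(4.9.2), the reduction to the pure exotic and enhanced factors being regarded as clear (cf.\ 4.9). The only cosmetic difference is your appeal to $Z(G)^{\io\th}$-equivariance for pinning down $\CL$, which is not needed: Proposition 5.7 together with the fact that the lists of constituents in (4.9.1)/(4.9.2) depend only on the Weyl-group orbit of $\CE$ (equivalently of $\CL$) already handles this, as you also indicate.
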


\para{6.7.}
The rest of the paper is devoted to the proof of the theorem.
We prove it following the strategy of the proof of 
Theorem 4.4 in [L3].  
When $G$ is a torus, the theorem  is obvious.  
So we assume that 
$G$ is not a torus, and that the theorem  is already proved for 
$\CX_{G'}$ for a smaller rank group $G'$. 
We shall prove them for $\CX = \CX_G$ by using the inductive argument. 
\par
First we show that 
\par\medskip\noindent
(6.7.1) \ Theorem 6.4 (ii) holds for $\CX$. 
\par\medskip
We assume that $P \ne G$. 
If $L = T$, the assertions hold by (4.9.1) and (4.9.2).  So 
we may assume that $L \ne T$. 
We consider the case where $\CX$ is of pure exotic type.  
The proof for the enhanced case is similar.
Then $\CX_L \simeq \CX'_L \times \CX''_L$, where 
$\CX'_L = GL_{2n'}^{\io\th} \times V^+_L$ and 
$\CX''_L = (GL_1)^{n-n'} \times V_L'$
as in 4.7. $A_1 \in \wh\CX_L$ can be written as $A_1 = A'_1\boxtimes A_1''$ 
with $A_1' \in \wh\CX'_L, A_1'' \in \wh\CX''_L$.  Since $A_1 \in \wh\CX_L^0$, 
$A''_1 \simeq \CE[n-n']$ for a tame local system $\CE$ on $(GL_1)^{n-n'}$.
By applying Theorem 6.4 (i) to $\CX'_L$,
one can find $A_0' \in \wh\CX^0_{T_1}$ ($T_1$ is a maximal torus of $GL_{2n'}$) such that
$A_1'$ is a direct summand of $\Ind_{\CX_{T_1}}^{\CX'_L}A_0'$.  
Here $\CX''_L = \CX_{T_2}$ for $T_2 = (GL_1)^{n-n'}$, and for $A_1'' \in \wh\CX_{T_2}^0$,
put $A_0 = A_0'\boxtimes A_1''$.  Then $A_0 \in \wh\CX^0_T$ 
and $A_1$ is a direct summand of $\Ind_{\CX_T}^{\CX_L}A_0$. 
Since $K = \Ind_{\CX_T}^{\CX_L}A_0$ has its support in $L^{\io\th} \times V_L^+$,  
$K$ is $P^{\th}$-equivariant.  
By Proposition 4.3, we have
\begin{equation*} 
\Ind_{\CX_T}^{\CX}A_0 \simeq \Ind_{\CX_L}^{\CX} 
                (\Ind_{\CX_T}^{\CX_L}A_0).
\end{equation*}
By (4.9.1), $\Ind_{\CX_T}^{\CX}A_0$ is a semisimple perverse sheaf.
Since $\Ind_{\CX_L}^{\CX}A_1$ is a direct summand of $\Ind_{\CX_T}^{\CX}A_0$,
$\Ind_{\CX_L}^{\CX}A_1$ is also a semisimple perverse sheaf. 
Thus (6.7.1) holds. 

\para{6.8.}
Under the notation in 4.7, assume that $V_L = V_L^+$.
Thus $V = V_L^-\oplus V_L^+ \oplus V_L''$ and $V' = V_L^- \oplus V_L^+$.
We consider a commutative diagram

\begin{equation*}
\begin{CD}
  D  @<f<<   \wt\CX^P   @>\r >>   \CX   \\
  @A\b AA              @AA\s A    @.  \\
  D'  @<\f <<     H \times P^{\io\th} \times V'   \\
  @V\g VV      @VV\t_1 V      \\
 \CX_L   @<\pi_P<<   P^{\io\th} \times V'  @>j>>   \CX,
\end{CD}
\end{equation*}
where
\begin{align*}
D &= H \times^{P^{\th}}(L^{\io\th} \times V_L), \quad 
        (\text {$P^{\th}$ acts on $L^{\io\th}$ through the map 
            $P^{\th} \to P^{\th}/U_P^{\th} \simeq L^{\th}$)}  \\\
D' &= H \times L^{\io\th} \times V_L,  \\
\f &: (g,p,v) \mapsto (g, \ol p, \ol v), \quad \\  
\b &: \text{the quotient map by $P^{\th}$}, \\
f  &: \text{the map induced from $\f$ through the quotient by $P^{\th}$} \\
\g &: (g, \ell,v) \mapsto (\ell,v),  \\
\t_1&: (g, p, v) \mapsto (p,v),
\end{align*}
and the maps $\r, \s$ are as in (4.2.1), the maps $j,\pi_P$ are as in 5.1 
(with respect to $\CX_L, V'$).
Note that the map $\pi_P\circ \t_1 = \g\circ\f$ coincides with the map 
$\t : H \times P^{\io\th} \times V' \to \CX_L$ in (4.2.1).
It is clear that both squares are cartesian squares.
\par
Take $K  \in \CM^{L^{\th}}(\CX_L)$.
$K$ can be lifted to a $P^{\th}$-equivariant complex since 
$V_L = V_L^+$. 
Since $\g$ is a  principal $H$-bundle and $\b$ is a principal 
$P^{\th}$-bundle, one can define 
$K' = \b_{\flat}\wt\g K[-\dim P^{\th}]$, so that 
$\wt\g K = \wt\b K'$ with $K' \in \CM(D)$.
Since $\Ind K = \r_!\s_{\flat}(\g\circ\f)^*K[a]$
(see (4.2.2)), we see that 
\begin{equation*}
\tag{6.8.1}
\Ind K \simeq \r_!\wt f K'[a -c - \dim U_P^{\th}],
\end{equation*}
where $c = \dim U_P^{\io\th} + \dim V_L^-$.
Let $A \in \wh\CX$. Note that 
$\Res A = (\pi_P)_!j^*A$. 
We define $\Res^+ A = f_!\r^*A[d]$ with 
$d = \dim U_P^{\th}$.
We show that 
\begin{equation*}
\tag{6.8.2}
\wt\g(\Res A) \simeq \wt\b(\Res^+A).
\end{equation*}
In fact, since both squares in the diagram are cartesian squares, we have 
\begin{align*}
\wt\g(\Res A) [-\dim H]&\simeq \f_!(\t_1)^*j^*A, \\
\wt\b(\Res^+A)[-\dim H] &\simeq \f_!\s^*\r^*A. 
\end{align*}
Let $\z = \r\circ \s, \z' = j\circ \t_1$.
In order to show (6.8.2), it is enough to see that $\z^*A \simeq (\z')^*A$. 
But $\z$ is the composite of the inclusion 
$H \times P^{\io\th} \times V' \hra H \times \CX$ and the map 
$H \times \CX \to \CX$, $(g, x, v) \mapsto (gxg\iv, gv)$, while
$\z'$ is the composite of the inclusion and the map 
$H \times \CX \to \CX$, $(g,x,v) \mapsto  (x,v)$. Since
$A$ is an $H$-equivariant perverse sheaf, we have 
$\z^*A \simeq (\z')^*A$, and (6.8.2) follows.   
\par
Next we show, for any integer $i$, that  
\begin{equation*}
\tag{6.8.3}
\Hom_{\DD(D)}(\Res^+A[-c_{\CX_L}], K'[i]) \simeq 
        \Hom_{\DD(\CX)}(A, \Ind K[i]).
\end{equation*}
In fact,  
by 0.2, the left hand side is equal to
\begin{align*}
\Hom (f_!\r^*A[d-c_{\CX_L}], K'[i]) 
           &\simeq \Hom (\r^*A[d-c_{\CX_L}], f^!K'[i]) \\
           &\simeq \Hom (\r^*A, f^*K'[i + 2c-d +c_{\CX_L}])
\end{align*}
since $f^! = f^*[2c]$ 
as $f$ is smooth with connected fibres of dimension $c$. 
On the other hand, again by 0.2, the right hand side is equal to
\begin{align*}
\Hom (A, \r_!\wt fK'[i+ a -c - d]) 
    &\simeq \Hom (\r^*A, f^*K'[i + a - d])
\end{align*}
since $\r_* = \r_!$ as $\r$ is proper, and $\wt f = f^*[c]$.
It is enough to show that 
\begin{equation*}
\tag{6.8.4}
2c + c_{\CX_L} = a.
\end{equation*}
But (6.8.4) is equivalent to (6.1.1).  
Thus (6.8.4) holds, and (6.8.3) follows. 

\par
Here we note that 
\par\medskip\noindent
(6.8.5) \  Take $K \in \CM^{L^{\th}}(\CX_L)$, and assume that any simple constituent of $K$ 
is contained in $\wh \CX_L$.  Then $\Ind K \in \CM(\CX)$.
\par\medskip
In fact, by Theorem 6.4 (ii), $\Ind A_1 \in \CM(\CX)$ for any 
simple constituent $A_1$ of $K$.  Then by (0.1.2) we obtain the assertion. 
\par

Take $K$ as in (6.8.5).  
If $i < 0$, we have $\Hom_{\DD(\CX)}(A, \Ind K[i]) = 0$
since $A \in \CM(\CX)$, and $\Ind K [i] \in \DD^{\ge 1}(\CX)$ (see (0.1.3)). 
It follows from (6.8.3) that 
\begin{equation*}
\tag{6.8.6}
\Hom_{\DD(D)}(\Res^+A[-c_{\CX_L}], K'[i]) = 0  \qquad\text{ for } i < 0.  
\end{equation*}
\par
We show that 
\begin{equation*}
\tag{6.8.7}
{}^pH^i(\Res^+A[-c_{\CX_L}]) = 0 \qquad \text{ for } i > 0.
\end{equation*}
Suppose not, and let $i > 0$ be the largest integer such that 
${}^pH^i(\Res^+A[-c_{\CX_L}]) \ne 0$. 
Since $\wt\b, \wt\g$ commute with the perverse cohomology functor by 0.2,
$i$ is also the largest integer such that ${}^pH^i(\Res A[-c_{\CX_L}]) \ne 0$.
By Proposition 5.9, $\Res A$ is a semisimple complex, 
and so it is decomposed as
\begin{equation*}
\Res A[-c_{\CX_L}] = \bigoplus_j{}^pH^j(\Res A[-c_{\CX_L}])[-j].
\end{equation*}
Thus we have 
a non-zero morphism $\Res A[-c_{\CX_L}] \to {}^pH^i(\Res A[-c_{\CX_L}])[-i]$.
By (6.8.2), this implies that there exists a non-zero morphism
$\wt\b(\Res^+ A[-c_{\CX_L}+i]) \to \wt\b({}^pH^i(\Res^+ A[-c_{\CX_L}]))$.
Since $\Res A[-c_{\CX_L} + i] \in \DD^{\le 0}(\CX_L)$ by (0.1.1),  
we have $\Res^+A[-c_{\CX_L} + i] \in \DD^{\le 0}(D)$ by 0.2. 
Hence by (0.2.3), there exists a non-zero morphism 
\begin{equation*}
\Res^+ A[-c_{\CX_L}] \to {}^pH^i(\Res^+A[-c_{\CX_L}])[-i].
\end{equation*}
Put $K = {}^pH^i(\Res A[-c_{\CX_L}])$ and 
$K' = {}^pH^i(\Res^+A[-c_{\CX_L}])$.
Since the perverse cohomology functor commutes with $\wt\b, \wt\g$, 
(6.8.2) implies that 
$\wt\b K' \simeq \wt\g K$.
Thus, one can apply (6.8.6), which implies that there exists no 
non-zero morphism $\Res^+ A[-c_{\CX_L}] \to K'[-i]$ for $i > 0$. 
This is a contradiction, and (6.8.7) follows. 
\par
(6.8.7) implies, by (0.1.1),  that
\begin{equation*}
\tag{6.8.8} 
\Res^+A[-c_{\CX_L}] \in \DD^{\le 0}(D).
\end{equation*} 
Applying $\wt\b$ to (6.8.8), 
we have $\wt\b(\Res^+A[-c_{\CX_L}]) \in \DD^{\le}(D')$.
Thus by (6.8.2), we have $\wt\g(\Res A[-c_{\CX_L}]) \in \DD^{\le 0}(D')$, 
and so
$\Res A[-c_{\CX_L}] \in \DD^{\le 0}(\CX_L)$.  
 Hence we have proved
\par\medskip\noindent
(6.8.9) Theorem 6.4 (iii) holds for $A \in \wh\CX$.
\par\medskip
Returning to the original $K$ as in (6.8.5), we have the following.
\begin{equation*}
\tag{6.8.10}
\begin{aligned}
\Hom (A, \Ind K) &\simeq \Hom (\Res^+A[-c_{\CX_L}], K')  &\quad 
                       &(\text{by (6.8.3))} \\
                  &\simeq \Hom (\wt\b \Res^+A[-c_{\CX_L}], \wt\b K') 
                    &\quad    &(\text{by (6.8.8) and (0.2.3))} \\
                  &\simeq \Hom (\wt\g \Res A[-c_{\CX_L}], \wt\g K) 
                    &\quad &(\text{by (6.8.2)) }  \\
                  &\simeq \Hom (\Res A[-c_{\CX_L}], K) 
                    &\quad &(\text{by (6.8.9) and (0.2.3)}).
\end{aligned}
\end{equation*}
(Note that $\Res A[-c_{\CX_L}] \in \DD^{\le 0}(\CX_L)$ and 
$\Res^+A[-c_{\CX_L}] \in \DD^{\le 0}(D)$ by the previous discussion.)
It follows that 
\par\medskip\noindent
(6.8.11) \ Theorem 6.4 (iv) holds for $\CX$. 
\par\medskip

Finally we show that 
\par\medskip\noindent
(6.8.12) \ Theorem 6.4 (i) holds for any $A \in \wh\CX$.
\par\medskip
Assuming that $G$ is not a torus, take $A \in \wh\CX$. 
We may assume that $A$ is given as in the proof of Proposition 6.3.
Let $\Res A $ be the restriction of $A$ with respect to 
$\CX_T= T^{\io\th} \times \{ 0\}$ as given in the proposition.
Then the discussion there shows that $\Res A[-c_{\CX_T}] \not\in \DD^{\le 1}(\CX_T)$.
Since $\CX_T$ satsifies the condition $V_T = V_T^+ = \{ 0\}$, 
$\Res A [-c_{\CX_T}]\in \DD^{\le 0}(\CX_T)$ by (iii). 
Hence ${}^pH^0(\Res A[-c_{\CX_T}]) \ne 0$.
Since $\Res A$ is a semisimple perverse sheaf and its simple components 
are all contained in $\wh\CX^0_T$ by Proposition 5.7 and Proposition 5.9, 
there exists a non-zero morphism 
$\Res A \to A_0$ for some $A_0 \in \wh\CX^0_T$. 
By applying (6.8.10) for $K = A_0$, we see that 
there exists a non-zero morphism $A \to \Ind A_0$.
This shows that $A$ appears as a direct 
summand of $\Ind A_0$.  (6.8.12) is proved.  
\par
This completes the proof of Theorem 6.4. 

\remark{6.9.}
Let $\CX$ be of pure exotic type.
\par
(i) \ For $P^{\th}$-equivariant $A_1 \in \wh\CX_L$, 
 $\Ind_{\CX_L}^{\CX}A_1$ is not necessarily contained in $\DD(\CX)^{\le 0}$ if we drop 
the assumption $A_1 \in \wh\CX_L^0$.
In fact $\Ind_{\CX_T}^{\CX}A_1 = K_{2n, T,\CE}[n]$ for 
$A_1 = \wt\CE_{V_1}[\dim \CX_{T,V_1}]$ with $\dim V_1 = n$ 
by 4.9.  Hence $\Ind_{\CX_T}^{\CX}A_1 \notin \DD^{\le 0}(\CX)$ by 
Proposition 1.12. 
\par
(ii) \ For $A \in \wh\CX$,  $\Res_{\CX_L}^{\CX}A$ is not necessarily contained 
in $\DD(\CX_L)^{\le 0}$ if we drop the assumption $V_L = V_L^+$.  
In fact, consider the case where 
$G = GL_2$.  In this case $H = SL_2$,
$G^{\io\th} = T^{\io\th} = Z(G)$ and $\CX = Z(G) \times V$ with $\dim V = 2$.  
Then any character sheaf on $\CX$ is given, for a tame local system $\CE$ on $T^{\io\th}$, 
 as $A_{\CE} = (\CE\boxtimes (\Ql)_V)[3]$,  
or $A_{\CE}' = (\CE \boxtimes (\Ql)_{\{0\}})[1]$ on $T^{\io\th} \times \{0\}$.  
(Here we denote by $(\Ql)_X$ the constant sheaf $\Ql$ on the variety $X$).
Now $\CX_T = T^{\io\th} \times V_T$ with $\dim V_T = 1$. 
We have
$\Res_{\CX_T}^{\CX}A_{\CE} = (\CE\boxtimes (\Ql)_{V_T})[1]$, 
since $p_!(\Ql)_V = (\Ql)_{V_T}[-2]$ for the projection $p: V \to V_T$, 
and $\Res_{\CX_T}^{\CX}A_{\CE}' = (\CE \boxtimes (\Ql)_{\{0\}})[1]$. 
Since any character sheaf on $\CX_T$ is given as $(\CE\boxtimes (\Ql)_V)[2]$
or $(\CE\boxtimes(\Ql)_{\{0\}})[1]$, we see that 
$\Res_{\CX_T}^{\CX}A \in \DD(\CX)^{\le 1}$, but 
$\Res_{\CX_T}^{\CX}A \notin \DD(\CX)^{\le 0}$. 

\para{6.10.}
By making use of Braden's theroem, one can show a certain 
refinement of Theorem 6.4 (iii).
We follow the notation in 5.8 and Proposition 5.9. 
We define a diagonal action $\s$ on $\CX$ by an element 
$g \in H$ as follows; in the pure exotic case, 
$g$ is a permutation $(1, n+1)(2, n+2)\cdots (n, 2n)$ of 
the symplectic basis $\{ e_1, \dots, e_n, f_1, \dots, f_n\}$.
In the enhanced case, $g$ is a permutation of the basis 
$\{ e_1, \dots, e_n\}$ giving the longest element in $S_n$. 
Put $L_1 = \s(L)$. Let $P_1$ be a parabolic subgorup of $G$ containing 
$B$ and $L_1$, and $P_1^-$ be the opposite parabolic subgroup of $P_1$
such that $P_1 \cap P_1^- = L_1$.  Then $\s(P) = P_1^-$.  Note that 
in the pure exotic case, $L_1 = L$ and $P_1 = P$.  
Recall that $V = V_L^- \oplus V_L^+ \oplus V_L''$, with $V' = V_L^-\oplus V_L^+$.
We consider a similar decompostion $V = V^-_{L_1}\oplus V_{L_1}^+\oplus V''_{L_1}$
with respect to $L_1$, and put $V'_1 = V^-_{L_1} \oplus V_{L_1}^+$. 
We have $\s(V') = V_{L_1}^+ \oplus V''_{L_1}$ and $\s(V^+_L) = V^+_{L_1}$. 
Put $c'_{\CX_L} = c_{\CX_{L_1}}$.  Explicitly, $c'_{\CX_L} = c_{\CX_L}$ in the 
pure exotic case, and $c'_{\CX_L} = -\dim V_L''$ (here $c_{\CX_L} = -\dim V_L^-$) in 
the enhanced case. 
We have the following result.

\begin{prop}   
Assume that $V_L = V_L^+$.  Then for any $A \in \wh\CX$, 
$\Res A[c'_{\CX_L}] \in \DD^{\ge 0}$. 
\end{prop}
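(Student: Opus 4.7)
The plan is to derive the statement from Theorem~6.4 (iii) applied to the Verdier dual $DA$ and the opposite parabolic $P^-$, using Braden's theorem (Theorem~0.6) as a bridge between the two. Following the identification in the proof of Proposition~5.9, Lemma~0.7 applied to the vector bundle $\pi_P\colon X^+\to X^0$ (with cocharacter $\vf$ giving strictly positive weights on the fibres) gives $\Res A \simeq (i^+)^!(p^+)^*A$, and Braden's theorem rewrites this as $(i^-)^*(p^-)^!A$. Applying Verdier duality in the form $Df^!=f^*D$ yields
\begin{equation*}
\Res A \simeq (i^-)^*(p^-)^!A \simeq D\bigl[(i^-)^!(p^-)^*(DA)\bigr].
\end{equation*}
I then apply Lemma~0.7 a second time, now to $\pi_{P^-}\colon X^-\to X^0$ equipped with the reversed cocharacter $\vf\iv$, which produces strictly positive weights on the fibres of $\pi_{P^-}$ (since the weights of $\vf$ on $U_{P^-}^{\io\th}$ and $V_L''$ are strictly negative). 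This identifies $(i^-)^!(p^-)^*(DA)$ with $(\pi_{P^-})_!(p^-)^*(DA)$, which is precisely $\Res^-(DA) := \Res^{\CX}_{\CX_L, V_L^+\oplus V_L'', P^-}(DA)$, the restriction functor attached to $P^-$ (with the role of $V'$ played by $V_L^+\oplus V_L''$ and the role of $V_L^-$ played by $V_L''$). Summarizing,
\begin{equation*}
\Res A \simeq D(\Res^- DA).
\end{equation*}

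Next I verify that $DA \in \wh\CX$, which enables application of Theorem~6.4 (iii) to $DA$ with the pair $(L, P^-)$. By Corollary~6.6 together with the explicit description (4.9.1)/(4.9.2), every character sheaf has the form $\IC(\CX_m, \CL_\r)[\dim \CX_m]$; its Verdier dual is $\IC(\CX_m, \CL_\r\ck)[\dim \CX_m]$, and $\CL_\r\ck$ is again of the form $\CL_{\r\ck}$ computed with respect to the local system $\CE\iv$ on $T^{\io\th}$, so $DA \in \wh\CX_{\CL\iv}$. Theorem~6.4 (iii) then yields $\Res^- DA \in \DD^{\le -c_{\CX_L, P^-}}$. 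A direct inspection using (6.1.1) for the opposite parabolic shows $c_{\CX_L, P^-} = c'_{\CX_L}$: in the pure exotic case both values vanish, and in the enhanced case both equal $-\dim V_L''$ (since the role of $V_L^-$ for $P^-$ is played by $V_L''$).

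Combining, $\Res A \simeq D(\Res^- DA) \in D(\DD^{\le -c'_{\CX_L}}) = \DD^{\ge c'_{\CX_L}}$, which is exactly $\Res A[c'_{\CX_L}] \in \DD^{\ge 0}$ as claimed.

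The main obstacle is the identification $(i^-)^!(p^-)^*(DA) \simeq \Res^-(DA)$ via the second application of Lemma~0.7: one must check that $V_L^+\oplus V_L''$ and $V_L''$ are $(P^-)^{\th}$-stable (so that $\Res^-$ is a well-defined restriction functor in the sense of Section~5.1), that $\CX_L$ remains in $\CS(\CX)$ under the $P^-$ framework, and that the shifts and equivariance structures coming from the two applications of Lemma~0.7 (with cocharacters $\vf$ and $\vf\iv$, respectively) line up correctly so that the equalities above hold on the nose rather than only up to a degree shift.
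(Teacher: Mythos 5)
Your reduction via Braden's theorem and duality is sound as far as it goes: $\Res A \simeq (i^+)^!(p^+)^*A \simeq (i^-)^*(p^-)^!A \simeq D\bigl((i^-)^!(p^-)^*(DA)\bigr)$, the second application of Lemma 0.7 with the inverted cocharacter does identify $(i^-)^!(p^-)^*(DA)$ with $(\pi_{P^-})_!(p^-)^*(DA)$, the computation $c_{\CX_L,P^-}=c'_{\CX_L}$ is correct, and $DA\in\wh\CX$ can indeed be read off from Corollary 6.6, which is available at this point. The genuine gap is the step ``Theorem 6.4 (iii) then yields $\Res^- DA \in \DD^{\le -c_{\CX_L,P^-}}$''. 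Theorem 6.4 (iii) is stated and proved only for restriction functors attached to $\th$-stable parabolic subgroups \emph{containing $B$}: this hypothesis is built into 4.1, into the definition of $\CS(\CX)$ in 4.7, and into 5.1, and the proof in 6.8 relies on the principal-series description (4.9.1)/(4.9.2) relative to $B$. The opposite parabolic $P^-$ does not contain $B$, so the pair $(L,P^-)$ with $V'=V_L^+\oplus V_L''$ is simply not in the scope of the theorem. Your closing remark that one ``must check that $\CX_L$ remains in $\CS(\CX)$ under the $P^-$ framework'' is exactly the crux, and as literally stated it fails; it is not a routine verification but the missing idea.

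The repair is the conjugation device that constitutes the paper's own proof (6.10--6.11): one takes the explicit element $g\in H$ with $\s=\Ad(g)$, for which $\s(P^-)=P_1\supset B$, $\s(L)=L_1$, $\s(V_L^+\oplus V_L'')=V^-_{L_1}\oplus V^+_{L_1}$ and $\s(V_L^+)=V^+_{L_1}$; the $P^-$-restriction is then identified through $\s_L^*$ with $\Res^{\CX}_{\CX_{L_1},V_1',P_1}$, which is covered by Theorem 6.4 (iii), and $H$-equivariance gives $\s^*A\simeq A$, so nothing is lost (indeed $c'_{\CX_L}$ is defined in the paper precisely as $c_{\CX_{L_1}}$). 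With this step inserted your argument closes, and it is in substance the paper's argument: the paper performs the conjugation first and applies Braden to the pair $P_1,P_1^-$, packaging the duality in (6.11.2)--(6.11.3) instead of passing to $DA$ at the outset.
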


\begin{proof}
The following proof was inspired by Achar [A, Theorem 3.1]. 
We have the following commutative diagram.

\begin{equation*}
\begin{CD}
L^{\io\th} \times V^+_L  @>i >> P^{\io\th} \times V' @>j >>  G^{\io\th} \times V  \\
     @V\s_L VV      @VV\s' V       @VV\s V   \\
L_1^{\io\th} \times V^+_{L_1}   @>\bar i_1 >>  (P_1^-)^{\io\th} \times \s(V') 
                     @>\bar j_1 >>  G^{\io\th} \times V,   
\end{CD}
\end{equation*}
where $\bar i_1, \bar j_1$ are defined similaly to $i, j$, and  
$\s'$ (resp. $\s_L$) is the restriction of $\s$ on $P^{\io\th} \times V'$
(resp. on $L^{\io\th} \times V^+_L$). 
Since $\s, \s', \s_L$ are isomorphisms, we have 
\begin{equation*}
\tag{6.11.1}
\s_L^*\bar i_1^*\bar j_1^! \simeq i^*j^!\s^! \simeq i^*j^!\s^*.
\end{equation*}
By Braden's theorem, applied to the maps $i_1, j_1, \bar i_1, \bar j_1$, 
we have a canonical isomorphism 
$i_1^!j_1^*K \simeq \bar i_1^*\bar j_1^!K$ for a weakly equivariant $K$
on $\CX$. 
Hence if $A \in \wh\CX$, we have
\begin{equation*}
\tag{6.11.2}
D(i^!j^*\s^*A) \simeq i^*j^!\s^*A  \simeq \s_L^*\bar i_1^*\bar j_1^!A 
    \simeq \s_L^*i_1^! j_1^*A.
\end{equation*}
Since $j^*\s^*A, j_1^*A$ are weakly equivariant, by Lemma 0.7, we have
\begin{equation*}
i^!j^*\s^*A \simeq (\pi_P)_!j^*\s^*A, \qquad 
    i_1^!j_1^*A \simeq  (\pi_{P_1})_!j_1^*A.  
\end{equation*}
Substituting this into (6.11.2), we have
\begin{equation*}
\tag{6.11.3}
D(\Res_{\CX_L,V',P}^{\CX}\s^*A) \simeq \s_L^*(\Res_{\CX_{L_1}, V_1', P_1}^{\CX}A).
\end{equation*}
By Theorem 6.4 (iii), we have 
$\Res_{\CX_{L_1}}^{\CX}A[-c'_{\CX_L}] \in \DD^{\le 0}(\CX_{L_1})$.
Hence (6.11.3) implies that
$\Res_{\CX_L}^{\CX}\s^*A [c'_{\CX_L}] \in \DD^{\ge 0}(\CX_L)$.
Since $\s^*$ gives a bijection on the set $\wh\CX$, the proposition follows
from this.

\end{proof}

\remark{6.12.}
In the case of original character sheaves in [L3], 
a similar argument as in the proof of Proposition 6.11 gives 
a simple proof based on Braden's theorem 
of the fact that $\Res A$ is a perverse sheaf 
(cf. [L3, Theorem 6.9 (a)]).

\para{6.13.}
Assume that $\CX$ is of pure exotic type or of enhanced type.
We consider a $\th$-stable parabolic subgroup $P$ of $G$, and 
its $\th$-stable Levi subgroup $L$, in general.  
For example, in the pure exotic case, consider 
$L^{\th} \simeq Sp_{2n_0} \times \prod_{i\ge 1}GL_{n_i}$, and 
correspondingly $V$ is decomposed as 
$V = V_0 \oplus \bigoplus_{i \ge 1}(V_i' \oplus V_i'')$, where 
$\dim V_0 = 2n_0$ and $\dim V_i' = \dim V_i'' = n_i$.
In that case, it is not known whether $\Res A$ is a semisimple 
complex or not.  The arguments for the proof of Proposition 5.9 
based on Braden's theroem  fails if $L^{\th}$ contains more than 
two factors. Also in that case, the problem of lifting of 
$L^{\th}$-equivaraint complex to $P^{\th}$-equivaraint complex 
in the definition of $\Ind A$ is delicate in general.  Our arguments 
can be generalized only in the 
following special cases;  let us
consider $V_L = V'/V_L^-$, where $V_L$ is a subquotient of $V_0$ or 
of $V'_i$ for some $i$ in the decomposition of $V$ so that 
$U_P^{\th}$ acts trivially on $V_L$.
Then Proposition 5.9 holds in this case, and properties (ii) $\sim$ (iv)
in Theorem 6.4 hold without change.

\par\bigskip

\begin{center}
{\sc Appendix} 
\par
\medskip
Corrections to ``Exotic symmetric space over a finite field, I, II''
\end{center}
\par\medskip
In this occasion, we give some corrections for [SS], [SS2].  We follow 
the notations in [SS], [SS2]. 
\par\medskip
{\bf A.  Corrections for [SS]}
\par\medskip
I. \   
The proof of Proposition 3.6 in [SS] contains a gap.  
In fact, in the proof of (3.6.4), $R^{2i}(\ol\psi_{m+1})_!\CE_{m+1}$ is 
claimed to be a perfect sheaf.  But this is not true since  
it does not satisfy the support condition. We give an alternate 
proof of Proposition 3.6 below. Note that the statement that 
$\IC(\CY_m, \CL_{\r})$ is a constructible sheaf in Proposition 3.6 in [SS] 
should be removed.
\addtocounter{section}{-3}
\addtocounter{thm}{-8}
\begin{prop} 
$\psi_*\a_0^*\CE[d_n]$ is a semisimple perverse sheaf on $\CY$, equipped with
$\wt\CA_{\CE}$-action, and is decomposed as
\begin{equation*}
\psi_*\a_0^*\CE[d_n] \simeq \bigoplus_{0 \le m \le n}\bigoplus_{\r \in \CA_{\Bm,\CE}\wg}
   \wt V_{\r} \otimes \IC(\CY_m, \CL_{\r})[d_m],
\end{equation*} 
where $d_m = \dim \CY_m$.
\end{prop}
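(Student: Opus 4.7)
The plan is to reprove Proposition~3.6 by an inductive stratification argument exactly parallel to the proof of (1.3.3) given earlier in this paper for the more general $(\ol\p'_m)_*\a_0^*\CE$. Concretely, I will prove by induction on $m$ the analogous formula for $(\ol\psi_m)_*\a_0^*\CE[d_m]$, where $\ol\psi_m$ denotes the restriction of $\psi$ to $\psi\iv(\CY_m) \to \CY_m$. Since $\ol\psi_m$ is proper, the decomposition theorem yields semisimplicity; the content is thus to identify the simple summands, after which the desired statement for $\psi_*\a_0^*\CE[d_n]$ follows from the case $m = n$ (where $\ol\psi_n = \psi$ and the shift $d_n$ is correct).

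First, the base case $m = 0$: here $\CY_0 = \CY_0^0$ is a single stratum and the formula reduces to the known decomposition of $(\psi_0)_*\a_0^*\CE$ on the open stratum, i.e.\ to (3.5.2) of [SS]. For the inductive step, write $\CY_m = \CY_m^0 \sqcup \CY_{m-1}$ with $\CY_{m-1}$ closed in $\CY_m$. The restriction of $(\ol\psi_m)_*\a_0^*\CE[d_m]$ to the open stratum $\CY_m^0$ coincides with $(\psi_m)_*\a_0^*\CE[d_m]$, whose stratumwise decomposition into local systems is supplied by (3.5.2); its restriction to $\CY_{m-1}$ is $(\ol\psi_{m-1})_*\a_0^*\CE[d_m]$, controlled by the inductive hypothesis (with an overall shift by $d_m - d_{m-1}$). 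Each simple summand $A[i]$ of the semisimple complex $(\ol\psi_m)_*\a_0^*\CE[d_m]$ then falls into one of two cases: either $\supp A \subset \CY_{m-1}$, in which case $A$ is accounted for by induction, or $\supp A \cap \CY_m^0 \ne \emptyset$, in which case the stratum decomposition forces $A \simeq \IC(\CY_m, \CL_\r)$ up to shift for some $\r \in \CA_{\Bm,\CE}\wg$, with multiplicity space $\wt V_\r$ and shift $d_m$, exactly as predicted.

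The key delicate point, and the precise place where the original proof in [SS] had a gap (by appealing incorrectly to a perfect-sheaf statement which fails the support condition), is to verify that these $\IC(\CY_m, \CL_\r)[d_m]$ summands do not spuriously contribute to the ``principal'' IC-terms $\IC(\CY_{m'}, \CL_{\r'})[d_{m'}]$ of the inductive formula for $(\ol\psi_{m-1})_*\a_0^*\CE[d_{m-1}]$, so that the induction closes properly. This follows from the defining support estimate of intermediate extensions: $\IC(\CY_m, \CL_\r)|_{\CY_{m-1}}$ lies in strictly negative perverse degrees, hence cannot perturb the perverse-degree-zero principal terms already determined on $\CY_{m-1}$. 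Combining this observation with the matching on the open stratum $\CY_m^0$ yields the inductive step, and specializing to $m = n$ gives Proposition~3.6 in the clean form stated, with no correction term.
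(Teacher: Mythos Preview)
Your inductive scaffolding is the paper's: filter $\CY$ by the $\CY_m$, apply the decomposition theorem to the proper map $\ol\psi_m$, and sort the simple summands of $(\ol\psi_m)_*\a_0^*\CE$ by whether their support meets $\CY_m^0$. The substantive difference is at the separation step. The paper first equips each $(\ol\psi_m)_!\a_0^*\CE$ with a $\wt\CA_\CE$-action, built inductively from (3.5.2) via the perverse-cohomology long exact sequence attached to $\CY_{m-1}\hookrightarrow\CY_m\hookleftarrow\CY_m^0$, and then argues that any simple summand of $\IC(\CY_m,\CL_\r)|_{\CY_{m-1}}$ carries as multiplicity module a multiple of $\wt V_\r$, whereas the principal summands for $m'<m$ carry $\wt V_{\r'}$; these are \emph{distinct} irreducible $\wt\CA_\CE$-modules, so no interference can occur. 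Your alternative, the IC support estimate $i^*\!\IC\in{}^p\DD^{\le -1}$, is correct and cleaner, but for it to read literally you must normalise by the single global shift $[d_n]$ rather than the step-dependent $[d_m]$, so that every principal term $\IC(\CY_{m'},\CL_{\r'})[d_{m'}]$ sits uniformly in perverse degree~$0$. Note also that citing ``the proof of (1.3.3)'' as your template is circular here, since in this paper that proof explicitly defers to the Appendix argument for (3.6.1*).

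There is a genuine gap at the last step. Your induction, properly run, proves only that $(\ol\psi_m)_*\a_0^*\CE[d_n]\in{}^p\DD^{\le 0}$ with ${}^pH^0$ equal to $\bigoplus_{m'\le m,\,\r}\wt V_\r\otimes\IC(\CY_{m'},\CL_\r)[d_{m'}]$; the correction terms coming from the factors $H^\bullet(\BP_1^{n-m'})$ in (3.5.2) sit in strictly negative perverse degrees and are present from $m=0$ onward. Setting $m=n$ does not by itself make them vanish. The paper closes this by invoking [SS, Lemma~3.3\,(iv)]: $\psi$ is semi-small, hence $\psi_*\a_0^*\CE[d_n]$ is a perverse sheaf, so every summand in nonzero perverse degree must be zero. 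You need this input (or the equivalent Verdier-duality argument, using that $\wt\CY$ is smooth of dimension $d_n$). Finally, the Proposition also asserts a compatible $\wt\CA_\CE$-action; the paper's proof produces it as a by-product of its module-theoretic separation, whereas your degree argument bypasses it entirely, so you must supply it separately.
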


\begin{proof}
In the following discussion, we write the restriction of $\a_0^*\CE$ on 
$\wt\CY_m^+$, etc. simply as $\a_0^*\CE$ if there is no fear of confusion.
Recall the formula (3.5.2) in [SS],
\begin{equation*}
\tag{3.5.2}
(\psi_m)_*\a_0^*\CE \simeq \bigoplus_{\r \in \CA\wg_{\Bm,\CE}}
\Ind_{\wt\CA_{\Bm,\CE}}^{\wt\CA_{\CE}}(H^{\bullet}(\BP_1^{n-m})\otimes\r)\otimes \CL_{\r}.
\end{equation*}
This formula can be rewritten as 
\begin{equation*}
\tag{a}
(\psi_m)_*\a_0^*\CE \simeq 
     \biggl(\bigoplus_{\r \in \CA\wg_{\Bm,\CE}}\wt V_{\r} \otimes \CL_{\r}\biggr)
       [-2(n-m)] + \CN_m,
\end{equation*}  
where $\CN_m$ is a sum of various $\CL_{\r}[-2i]$ for $\r \in \CA\wg_{\Bm, \CE}$
with $0 \le i < n-m$.
Let $\ol\psi_m$ be as in the proof of Proposition 3.6.  
The statement (3.6.1) in [SS] should be replaced by the following formula.  
For each $m \le n$,  
\begin{equation*}
\tag{3.6.1*}
(\ol\psi_m)_*\a_0^*\CE \simeq \bigoplus_{0 \le m' \le m}\bigoplus_{\r \in \CA\wg_{\Bm',\CE}}
          \wt V_{\r}\otimes \IC(\CY_{m'}, \CL_{\r})[-2(n- m')] + \ol\CN_m, 
\end{equation*} 
where $\ol\CN_m$ is a sum of various $\IC(\CY_{m'}, \CL_{\r})[-2i]$ for 
$0 \le m' \le m$ and $\r \in \CA_{\Bm',\CE}\wg$ with $i < n - m'$. 
Note that (3.6.1*) will imply the proposition. 
In fact, $\ol\psi_m = \psi$ for $m = n$, and in that case, 
$d_n - d_{m'} = 2(n-m')$ by Lemma 3.3 (iii) 
in [SS].  
Since $\psi$ is semi-small by Lemma 3.3 (iv), $\psi_*\a_0^*\CE[d_n]$ is a semisimple 
perverse sheaf. 
But since $d_n-2i > d_{m'} = \dim \CY_{m'}$, $\IC(\CY_{m'}, \CL_{\r})[d_n-2i]$
is not a perverse sheaf for any $\IC(\CY_{m'}, \CL_{\r})[-2i]$ appearing in $\ol\CN_n$.
It follows that $\ol\CN_n = 0$ and we obtain the proposition. 
\par
We show (3.6.1*) by induction on $m$.  If $m = 0$, $(\ol\psi_m)_*\a_0^*\CE$ coincides with 
$(\psi_m)_*\a_0^*\CE$.  Hence it holds by (a).  
We assume that (3.6.1*) holds for any $m' < m$. 
Recall that $\CY_m \backslash \CY_{m-1} = \CY_m^0$ and $j : \CY_m^0 \to \CY_m$ is 
the open immersion. Since $\ol\psi_m$ is proper, 
$(\ol\psi_m)_*\a_0^*\CE$ is a semisimple complex on $\CY_m$. 
Here we note that $(\ol\psi_m)_*\a_0^*\CE \simeq (\ol\psi_m)_!\a_0^*\CE$ has a natural 
structure of $\wt\CA_{\CE}$-complexes.
In fact, $(\psi_m)_!\a_0^*\CE$ has a $\wt\CA_{\CE}$-action by (3.5.2).  
It induces an $\wt\CA_{\CE}$-action on $(j\circ\psi_m)_!\a_0^*\CE$, and hence on its
perverse cohomology ${}^pH^i((j\circ\psi_m)_!\a_0^*\CE)$. 
On the other hand, by induction $(\ol\psi_{m-1})_!\a_0^*\CE$ has a natural 
$\wt\CA_{\CE}$-action, which induces an $\wt\CA_{\CE}$-action on 
${}^pH^i((\ol\psi_{m-1})_!\a_0^*\CE)$.  
Thus by using the perverse cohomology long exact sequence, one can define 
an action of $\wt\CA_{\CE}$ on ${}^pH^i((\ol\psi_m)_!\a_0^*\CE)$.  
Since $(\ol\psi_m)_!\a_0^*\CE$ is a semisimple complex, in this way
the action of $\wt\CA_{\CE}$ can be defined. 
\par
Now since $(\ol\psi_m)_*\a_0^*\CE$ is a semisimple complex, it is a direct sum 
of the form $A[s]$ for a simple perverse sheaf $A$.  Suppose that $\supp A$ is not contained 
in $\CY_{m-1}$. Then $\supp A \cap \CY_m^0 \neq \emptyset$ and the restriction of 
$A$ on $\CY_m^0$ is a simple perverse sheaf on $\CY_m^0$. 
The restriction of $(\ol\psi_m)_*\a_0^*\CE$ on $\CY_m^0$ is isomorphic to 
$(\psi_m)_*\a_0^*\CE$.  Hence its decomposition is given by the formula 
(a).  It follows that $A|_{\CY_m^0} = \CL_{\r}$ for some $\r$, up to shift.
This implies that $A = \IC(\CY_m, \CL_{\r})[d_m]$ and that the direct sum of $A[s]$
appearing in $(\ol\psi_m)_*\a_0^*\CE$ such that $\supp A \cap \CY_m^0 \neq \emptyset$ 
is given by 
\begin{equation*}
K_1 = \bigoplus_{\r \in \wt\CA_{\Bm, \CE}\wg}\wt V_{\r} \otimes 
       \IC(\CY_m, \CL_{\r})[-2(n-m)] + \CN_m', 
\end{equation*} 
where $\CN_m'$ is a sum of various $\IC(\CY_m, \CL_{\r})[-2i]$ 
with $0 \le i < n - m$. 
\par
If $\supp A$ is contained in $\CY_{m-1}$, $A[s]$ appears as a direct summand of 
$(\ol\psi_{m-1})_*\a_0^*\CE$, which is decomposed as in (3.6.1*) by induction
hypothesis.  Thus if we exclude the contribution from the restriction of $K_1$ on $\CY_{m-1}$,
such $A[s]$ is determined from  $(\ol\psi_{m-1})_*\a_0^*\CE$.  So we consider the restriction
of $K_1$ on $\CY_{m-1}$.  Since $\CN_m'$ is contained in $\ol\CN_m$, we can ignore this part.
Hence it is enough to consider $A = \IC(\CY_m, \CL_{\r})[d_m]$. 
Note that the multiplicity space of $A$ in $K_1$ is $\wt V_{\r}$. 
Hence the multiplicity space of a simple perverse sheaf $A'$ appearing in the decomposition 
of $A|_{\CY_{m-1}}$, up to shift, has a structure of $\wt\CA_{\CE}$-module which is 
a sum of $\wt V_{\r}$.   
But by (3.6.1*) applied for $m-1$, the multiplicity space of a simple perverse sheaf $B$
appearing in the first term of (3.6.1*) is a sum of $\wt V_{\r'}$ 
with $\r' \in \CA_{\Bm', \CE}\wg$ for $m' < m$. 
Since $\wt V_{\r}$ and $\wt V_{\r'}$ are different simple $\wt\CA_{\CE}$-modules, 
$A|_{\CY_{m-1}}$ gives no contribution on the first terms in (3.6.1*). 
Since $\ol\CN_{m-1} \subset \ol\CN_m$, this proves (3.6.1*), and the proposition follows. 
\end{proof}

\par\medskip
II. \ Since (3.6.1) was modified, the formula (4.9.1) must be modified to the following form.

\begin{equation*}
\tag{4.9.1*}
\begin{split}
(\ol\pi_m)_*&(\a^*\CE)|_{\pi\iv(\CX_m)}[d_m] \\
    &\simeq 
      \bigoplus_{0 \le m' \le m}\bigoplus_{\r \in \CA_{\Bm',\CE}\wg}
        \wt V_{\r}\otimes \IC(\CX_{m'},\CL_{\r})[d_{m'} - 2(n-m)] + \CM_m,
\end{split}
\end{equation*}
where $\CM_m$ is a sum of various $\IC(\CX_{m'},\CL_{\r})[-2i]$ for 
$0 \le m' \le m$ and $\r \in \CA_{\Bm',\CE}\wg$ with 
$m' - m \le i < n - m$.  
\par
\medskip
By using a similar discussion as in I, one can define a natural 
$\wt\CA_{\CE}$-action on $(\ol\pi_m)_*\a^*\CE$.  Then (4.9.1*) is proved 
by a similar argument as in the proof of (3.6.1*), by using Proposition 4.8
instead of (3.5.2). 
\par
By applying (4.9.1*) to the case where $m = n$, we see that 
any simple perverse sheaf $A[i]$ appearing in the direct sum decompostion
of $\pi_*\a^*\CE[d_n]$ 
has the property that $\supp A \cap \CY \ne \emptyset$. Since $\CY$ is open dense 
in $\CX$, and the restriction of $\pi_*\a^*\CE$ on $\CY$ coincides with $\p_*\a_0^*\CE$, 
Theorem 4.2 in [SS] follows from  Proposition 3.6.

\par\medskip
III. \ In 5.6, the variety $\CZ$ is defined with respect to the map 
$\pi_P: P^{\io\th}\uni \times V \to L^{\io\th}\uni \times \wt V_L$
and an $L^{\th}$-orbit $\CO_L$.  But since the action of $U_P^{\th}$ on $\wt V_L$ is 
not necessarily trivial, $\pi_P\iv(\CO_L)$ is not stable by the action of 
$P^{\th}$, and $\CZ$ cannot be defined in this way.  We must replace $\CO_L$ 
by the $P^{\th}$-orbit $\wt\CO_L$ containing $\CO_L$.  Since the set of $L^{\th}$-orbits in 
$L^{\io\th} \times V_L$ is finite, $\wt\CO_L$ is a finite union of $L^{\th}$-orbits,
and so $\dim \wt\CO_L = \dim \CO'_L$ for some $L^{\th}$-orbit $\CO_L'$. 
Clearly $\pi_P\iv(\wt\CO_L)$ is stable by $P^{\th}$.
Thus in the definition of $\CZ$, $\CO_L$ should be replaced by $\wt\CO_L$.  
Accordingly, Proposition 5.7 should be modified to the following form.

\addtocounter{section}{2}
\begin{prop} 
Put $c = \dim \CO, \ol c = \dim \CO_L$ and $\ol c' = \dim \wt\CO_L$.
\begin{enumerate}
\item
For $(\ol x, \ol v) \in \wt\CO_L$, we have $\dim (\CO \cap \pi_P\iv(\ol x, \ol v)) \le 
                (c - \ol c')/2 \le (c - \ol c)/2$.
\item
For $(x,v) \in \CO$, 
\begin{equation*}
\dim \{gP^{\th} \in H/P^{\th} \mid (g\iv xg, g\iv v) \in \pi_P\iv(\wt\CO_L)\}
                 \le (\nu_H -c/2) - (\ol\nu - \ol c'/2).
\end{equation*}
\item
Put $d_0 = 2\nu_H - 2\ol\nu + \ol c'$.  Then $\dim \CZ_{\w} \le d_0$ for all $\w$.  
Hence $\dim \CZ \le d_0$.
\end{enumerate}
\end{prop}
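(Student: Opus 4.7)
The plan is to prove this corrected version of Proposition~5.7 in [SS] by adapting the original argument, with $\CO_L$ replaced systematically by its $P^{\th}$-saturation $\wt\CO_L$, and correspondingly $\ol c$ replaced by $\ol c'$ in the bounds. The three statements will be established in the order (iii) $\Rightarrow$ (ii) $\Rightarrow$ (i), exactly mirroring the scheme used in Proposition~1.14 of the present paper. Note that the right-hand inequality in (i) is immediate, since $\CO_L \subseteq \wt\CO_L$ forces $\ol c \le \ol c'$, and hence $(c-\ol c')/2 \le (c-\ol c)/2$; so the real content of (i) is the first inequality.

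First I would prove (iii), which is the main technical estimate. Decompose $\CZ = \coprod_{\w} \CZ_{\w}$ according to the partition of $H/P^{\th} \times H/P^{\th}$ into $H$-orbits $X_{\w}$, and use $H$-equivariance of the projection $p : \CZ \to H/P^{\th} \times H/P^{\th}$ to reduce the bound on $\dim \CZ_{\w}$ to that of the fiber of $p$ above a chosen representative $(P^{\th}, \dw P^{\th})$ of $X_{\w}$. This fiber is the set of pairs $(x,v) \in \pi_P\iv(\wt\CO_L)$ such that $(\dw\iv x \dw, \dw\iv v)$ also lies in $\pi_P\iv(\wt\CO_L)$. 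One bounds it by analysing the twisted intersection $P^{\io\th} \cap {}^{\dw}P^{\io\th}$ fibred over the intersection of $\wt\CO_L$ with its $\dw$-translate in $\CX_L$, using $\dim \wt\CO_L = \ol c'$. Combined with the standard identity $\dim X_{\w} = 2\nu_H - \dim (P \cap {}^{\dw}P)^{\th}$, this yields $\dim \CZ_{\w} \le 2\nu_H - 2\ol\nu + \ol c' = d_0$.

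For (ii), consider the projection $q : \CZ \to G^{\io\th} \times V$ onto the first two factors and restrict to $\CO$; for each $z \in \CO$ one has $q\iv(z) \simeq Y_z \times Y_z$, where $Y_z$ is the variety appearing in (ii). Thus $\dim Y_z = (\dim q\iv(\CO) - c)/2$, and the bound from (iii), applied to $\dim q\iv(\CO) \le d_0$, produces the asserted estimate $\dim Y_z \le (\nu_H - c/2) - (\ol\nu - \ol c'/2)$. For (i), introduce the correspondence variety
\begin{equation*}
\CX_{\CO} = \{ ((x,v), gP^{\th}) \in \CO \times H/P^{\th} \mid (g\iv x g, g\iv v) \in \pi_P\iv(\wt\CO_L)\},
\end{equation*}
and compare its two projections: the projection to $\CO$ has fibers isomorphic to $Y_z$, so (ii) gives $\dim \CX_{\CO} \le c + (\nu_H - c/2) - (\ol\nu - \ol c'/2)$, while the fibers of the projection to $H/P^{\th}$ are isomorphic to $\CO \cap \pi_P\iv(\ol x, \ol v)$ for $(\ol x, \ol v) \in \wt\CO_L$. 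Equating $\dim \CX_{\CO} = \dim(\CO \cap \pi_P\iv(\ol x, \ol v)) + \dim H/P^{\th}$ yields (i).

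The main obstacle lies in (iii), where one must track how $\wt\CO_L$ behaves under the twist by $\dw$. Because $\wt\CO_L$ is only $P^{\th}$-stable rather than $L^{\th}$-stable, and is in general a finite union of several $L^{\th}$-orbits, care is required to ensure that the fiber analysis produces a bound genuinely involving $\ol c' = \dim \wt\CO_L$ uniformly across these orbits, rather than being forced up to some larger quantity by boundary effects. This is precisely the subtlety that was mishandled in the original argument of [SS], where the non-triviality of the $U_P^{\th}$-action on $\wt V_L$ was overlooked; working with $\wt\CO_L$ throughout, and tracking the correct dimension $\ol c'$, resolves the issue.
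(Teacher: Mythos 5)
Your overall plan — prove (iii) first by decomposing $\CZ$ over the $H$-orbits $X_{\w}$, deduce (ii) from $q\iv(z)\simeq Y_z\times Y_z$, then deduce (i) from a correspondence variety $\CX_{\CO}$ — is exactly the scheme of Propositions 1.14 and 2.15, and it is also what the paper intends here (the paper only says that the arguments of [SS, Prop. 5.7] go through once $\CO_L$ is replaced by $\wt\CO_L$). Parts (ii) and (iii) are in line with that, although (iii) is only a sketch: the key estimate for the fibre of $p$ over $(P^{\th},\dw P^{\th})$ is described but not carried out, and your identity $\dim X_{\w}=2\nu_H-\dim(P\cap{}^{\dw}P)^{\th}$ is off by $\dim T^{\th}$ (compare Prop.\ 1.14, where the torus contributions cancel because the formula is stated with $(U\cap{}^wU)^{\th}$).

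The genuine gap is in (i). With $\CX_{\CO}$ defined as you wrote it, the fibre of the projection $\CX_{\CO}\to H/P^{\th}$ over $gP^{\th}$ is $\CO\cap g\,\pi_P\iv(\wt\CO_L)\simeq \CO\cap\pi_P\iv(\wt\CO_L)$, i.e.\ the intersection of $\CO$ with the \emph{full} preimage of the $P^{\th}$-orbit, not with the single fibre $\pi_P\iv(\ol x,\ol v)$. Moreover the arithmetic does not close even formally: from (ii) one gets $\dim\CX_{\CO}\le c+(\nu_H-c/2)-(\ol\nu-\ol c'/2)$, and subtracting $\dim H/P^{\th}=\nu_H-\ol\nu$ yields only $\dim(\CO\cap\pi_P\iv(\wt\CO_L))\le (c+\ol c')/2$, which is weaker by $\ol c'$ than the bound $(c-\ol c')/2$ you claim to obtain. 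The missing step is precisely the point of the correction: since $\wt\CO_L$ is a single $P^{\th}$-orbit and $\pi_P$ is $P^{\th}$-equivariant, the set $\CO\cap\pi_P\iv(\wt\CO_L)$ is $P^{\th}$-stable and maps onto $\wt\CO_L$ with all fibres isomorphic to $\CO\cap\pi_P\iv(\ol x,\ol v)$; hence $\dim(\CO\cap\pi_P\iv(\ol x,\ol v))=\dim(\CO\cap\pi_P\iv(\wt\CO_L))-\ol c'\le (c-\ol c')/2$. (This is exactly where working with $\wt\CO_L$ rather than the $L^{\th}$-orbit $\CO_L$ is essential, since $\pi_P\iv(\CO_L)$ is not $P^{\th}$-stable.) Your observation that the second inequality $(c-\ol c')/2\le(c-\ol c)/2$ follows from $\ol c\le\ol c'$ is correct.
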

The arguments used to prove Proposition 5.7 in [SS] work well under this modification.
Note that Propostion 5.7 (i) in [SS] holds without change. 
\par
A similar correction should be made in Section 6.  In the discussion of 6.1, 
the $L^{\th}$-orbit $\CO'$ should be replaced by the $P^{\th}$-orbit $\wt\CO'$ 
so that Proposition 5.7 can be applied. 
In particular, the definition of $d_{z,z'}$ in 6.1 should be replaced by 
$d_{z,z'} = (\dim Z_H(z) - \dim Z_{P^{\th}}(z'))/2 + \dim U_P^{\th}$. 
\par
\medskip
IV. \ 
The followings are simple typos. 
\begin{itemize}
\item
The latter formula in the third line of the proof of Proposition 5.7 should be 
replaced by 
\begin{equation*}
V = \wt V_0 \oplus
       \bigoplus_{i=1}^kV_i.
\end{equation*} 

\item
(5.4.2) and (5.4.3) in Section 5 should be replaced by 
\begin{equation*}
\tag{5.4.2}
H^i(\pi_1\iv(x,v), \Ql) \simeq \bigoplus_{\Bmu \in \CP_{n,2}}
\r_{\Bmu} \otimes \CH^{i-\dim \CX\uni + \dim \CO_{\Bmu}}\IC(\ol\CO_{\Bmu}, \Ql).
\end{equation*}

\begin{equation*}
\tag{5.4.3}
H^{2d_{\Bmu}}(\pi_1\iv(x,v), \Ql) \supset 
   \r_{\Bmu}\otimes \CH^0_{(x,v)}\IC(\ol\CO_{\Bmu},\Ql) \simeq \r_{\Bmu}.
\end{equation*}

\end{itemize}

\par\bigskip
{\bf B. Corrections for [SS2]}
\par\medskip
Since the discussion in [SS2, 2.1] is based on (3.6.1) in [SS], it must be 
modified as follows.  
By (3.6.1*), $(\ol\psi_m)_*\wt\CE_m$ is a semisimple complex, whose simple 
components are perverse sheaves shifted by even degrees 
(here $\wt\CE_m = \a_0^*\CE|_{\psi\iv(\CY_m)}$).  It follows that 
${}^pH^i((\ol\psi_m)_*\wt\CE) = 0$ for odd $i$.  
By using a canonical distinguished triangle 
$((j_0)_!(\psi_m)_*\wt\CE_m, (\ol\psi_m)_*\wt\CE_m, (\ol\psi_{m-1})_*\wt\CE_{m-1})$,     
this implies that 
${}^pH^i((j_0)_!(\psi_m)_*\wt\CE_m)$ is a semisimple perverse sheaf.
Then instead of (2.1.1) in [SS2], we obtain 
\begin{equation*}
\tag{2.1.1*}
{}^pH^{2(n-m)}((j_0)_!(\psi_m)_*\wt\CE_m[d_m]) \simeq \bigoplus_{\r \in \CA_{m,\CE}\wg}
               \wt V_{\r}\otimes \IC(\CY_m, \CL_{\r})[d_m].
\end{equation*}
Similarly, by using (4.9.1*) we see that 
${}^pH^i(j_!(\pi_m)_*\wt\CE_m)$ is a semisimple perverse 
sheaf.  Thus instead of (2.1.2) in [SS2], we obtain

\begin{equation*}
\tag{2.1.2*}
{}^pH^{2(n-m)}(j_!(\pi_m)_*\wt\CE_m[d_m]) \simeq \bigoplus_{\r \in \CA_{m,\CE}\wg}
         \wt V_{\r} \otimes \IC(\CX_m, \CL_{\r})[d_m].
\end{equation*}  

By comparing (2.1.2*) with (1.3.1) in [SS2], we obtain a formula which is 
a replacement of (2.1.5) in [SS2]. 
\begin{equation*}
\tag{2.1.5*}
\x_{K,\vf} = \x_{T,\CL} = \sum_{m=0}^n\x_{{}^pH^{2(n-m)}K_{m, \vf_m}}. 
\end{equation*}

\par
The arguments in 2.6 remain valid if we replace (2.1.1), (2.1.2) and (2.1.5) by 
(2.1.1*), (2.1.2*) and (2.1.5*), (and by a suitable choice of the cohomology degree). 

\bigskip

\par\vspace{1cm}
\noindent
T. Shoji \\
Department of Mathematics, Tongji University \\ 
1239 Siping Road, Shanghai 200092, P. R. China  \\
E-mail: \verb|shoji@tongji.edu.cn|
\par\bigskip\bigskip\noindent
K. Sorlin \\
L.A.M.F.A, CNRS UMR 7352, 
Universit\'e de Picardie-Jules Verne \\
33 rue Saint Leu, F-80039, Amiens, Cedex 1, France \\
E-mail : \verb|karine.sorlin@u-picardie.fr|

 \end{document}